\newcolumntype{C}[1]{>{\centering\arraybackslash}p{#1}}
\definecolor{navy}{HTML}{2F729C} 
\newtheorem{theorem}{Theorem}[section]
\newtheorem{lemma}[theorem]{Lemma}
\newtheorem{proposition}[theorem]{Proposition}
\newtheorem{corollary}[theorem]{Corollary}
\theoremstyle{definition}
\newtheorem{example}[theorem]{Example}
\newenvironment{claim}[1][Claim]{\begin{trivlist}
\item[\hskip \labelsep {\bfseries #1}]}{\end{trivlist}}
\theoremstyle{remark}
\numberwithin{equation}{section}
\begin{document}

\title[Good Elliptic Curves with a Specified Torsion Subgroup]{Good Elliptic Curves with a Specified Torsion Subgroup}

\author{Alexander J. Barrios}
\address{Department of Mathematics, University of St. Thomas, St. Paul, Minnesota 55105}
\email{abarrios@stthomas.edu}

%    General info
\subjclass{Primary 11G05, 11D75, 11J25}

%\dedicatory{This paper is dedicated to our advisors.}

%\keywords{Arithmetic Geometry, Elliptic Curves, Number Theory}

\begin{abstract}
An elliptic curve $E$ over $\mathbb{Q}$ is said to be good if $N_{E}^{6}<\max\!\left\{  \left\vert c_{4}^{3}\right\vert ,c_{6}^{2}\right\}  $ where $N_{E}$ is the conductor of $E$ and $c_{4}$ and $c_{6}$ are the invariants associated to a global minimal model of $E$. In this article, we generalize Masser's Theorem on the existence of infinitely many good elliptic curves with full $2$-torsion. Specifically, we prove via constructive methods that for each of the fifteen torsion subgroups $T$ allowed by Mazur's Torsion Theorem, there are infinitely many good elliptic curves $E$ with $E(\mathbb{Q})_{\text{tors}}\cong T$.
\end{abstract}
\maketitle
%\setcounter{tocdepth}{1}
%\tableofcontents

\section{Introduction}

By an $ABC$\textit{ triple}, we mean a triple $\left(  a,b,c\right)  $ such
that $a,b,$ and $c$ are relatively prime integers with $a+b=c$. Masser and
Oesterl\'{e}'s $ABC$ Conjecture \cite{MR992208} asserts that for each
$\epsilon>0$, there are finitely many $ABC$ triples satisfying
$\operatorname{rad}\!\left(  abc\right)  ^{1+\epsilon}<\max\!\left\{
\left\vert a\right\vert ,\left\vert b\right\vert ,\left\vert c\right\vert
\right\}  $. An $ABC$ triple is said to be \textit{good} if
$\operatorname{rad}\!\left(  abc\right)  <\max\!\left\{  \left\vert
a\right\vert ,\left\vert b\right\vert ,\left\vert c\right\vert \right\}  $. It
is well known that there are infinitely many good $ABC$ triples as demonstrated by
the $ABC$ triple $\left(  1,9^{n}-1,9^{n}\right)  $ which is good each
$n\geq1$ \cite{MR1860012}, \cite{MR3298238}.

In \cite{MR992208}, Oesterl\'{e} proved that the $ABC$ conjecture is
equivalent to the modified Szpiro conjecture, which states that for each
$\epsilon>0$, there are finitely many elliptic curves $E/%
%TCIMACRO{\U{211a} }%
%BeginExpansion
\mathbb{Q}
%EndExpansion
$ such that $N_{E}^{6+\epsilon}<\max\!\left\{  \left\vert c_{4}^{3}\right\vert
,c_{6}^{2}\right\}  $ where $c_{4}$ and $c_{6}$ are the invariants associated
to a global minimal model of $E$ and $N_{E}$ denotes the conductor of $E$. As
with a good $ABC\ $triple, we define an elliptic curve $E/%
%TCIMACRO{\U{211a} }%
%BeginExpansion
\mathbb{Q}
%EndExpansion
$ to be \textit{good} if $N_{E}^{6}<\max\!\left\{  \left\vert c_{4}%
^{3}\right\vert ,c_{6}^{2}\right\}  $. Masser \cite{MR1065152} proved that
there are infinitely many good elliptic curves having full $2$-torsion.
Bennett and Yazdani \cite{MR2931308} followed this by showing that there are
infinitely many good elliptic curves arising as rational points on twists of
the modular curve $X\!\left(  6\right)  $. We note that both of these results
concerned Szpiro's conjecture \cite{MR642675}. Specifically, they showed
the existence of infinitely many elliptic curves $E$ satisfying $N_{E}^{6}<\left\vert
\Delta\right\vert $ where $\Delta$ is the minimal discriminant of $E$. Since
$\left\vert \Delta\right\vert <\max\!\left\{  \left\vert c_{4}^{3}\right\vert
,c_{6}^{2}\right\}  $, we have that these elliptic curves are good. In
\cite{barrios2019constructive}, the author showed that there are infinitely
many good elliptic curves $E$ with $E\!\left(
%TCIMACRO{\U{211a} }%
%BeginExpansion
\mathbb{Q}
%EndExpansion
\right)  _{\text{tors}}\cong C_{2}\times C_{2N}$ where $N=1,2,3,$ or $4$ and
$C_{m}$ denotes the cyclic group of order $m$. In this article, we extend
these results and prove:

\begin{claim}
[Theorem 1.]\textit{Let $T$ be one of the fifteen torsion subgroups allowed by Mazur's
Torsion Theorem~\cite{MR488287}. Then there are infinitely many good
non-isomorphic elliptic curves $E$ with $E(\mathbb{Q})  _{\text{tors}}\cong~T$.}
\end{claim}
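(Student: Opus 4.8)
The plan is to prove the theorem one torsion subgroup at a time, in each case writing down an explicit family of elliptic curves over $\mathbb{Q}$ and then specializing its parameter along a sequence that forces the resulting curves to be good. For a subgroup $T$ containing a point of order $m\ge 4$, recall that any elliptic curve over $\mathbb{Q}$ with such a point has a unique Tate normal model $E(b,c)\colon y^{2}+(1-c)xy-by=x^{3}-bx^{2}$ with $(0,0)$ that point, and that imposing $T\hookrightarrow E(b,c)(\mathbb{Q})_{\mathrm{tors}}$ cuts out a genus-zero curve in the $(b,c)$-plane. Parametrizing it (Kubert) gives $b,c\in\mathbb{Q}(t)$ and, after clearing denominators, an integral model $E_{T}(t)$ with invariants $c_{4}(t),c_{6}(t),\Delta(t)\in\mathbb{Z}[t]$ whose generic fibre has torsion containing $T$; for $T\in\{C_{1},C_{2},C_{3},C_{2}\times C_{2}\}$, where the Tate model is unavailable, I would use instead a standard family with the requisite torsion (the families $y^{2}=x^{3}+\alpha x^{2}+\beta x$ for $C_{2}$ and $y^{2}+\alpha_{1}xy+\alpha_{3}y=x^{3}$ for $C_{3}$, the Legendre family for $C_{2}\times C_{2}$, and a non-isotrivial family with stably trivial torsion for $C_{1}$).

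For each family I would then isolate the finitely many parameters to be discarded: the zeros of $\Delta(t)$; the $t$ at which the model is non-minimal or has large conductor exponent, found from the common zeros of $c_{4},c_{6},\Delta$ together with Tate's algorithm (or Kraus's criteria) at $2$ and $3$; and the $t$ at which the torsion strictly exceeds $T$. The last set is finite: a fibre with an extra rational point of order $N'$ gives a rational point on $X_{1}(N')$ or $X_{1}(m,n)$, and for the finitely many relevant $N'$ these curves have only finitely many rational points. Off this finite exceptional set one has $E_{T}(t)(\mathbb{Q})_{\mathrm{tors}}\cong T$, the model is minimal at every $p\ge 5$, and $N_{E}\mid 2^{a}3^{b}\operatorname{rad}(\Delta(t))$ for bounded $a,b$ — the last point using that the primes dividing a repeated factor of $\Delta(t)$ do not divide $c_{4}(t)$, so that reduction there is multiplicative.

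The crux is to choose, for each $T$, a non-constant rational substitution $t=h(u)$ so that the specialized discriminant $\Delta(h(u))$ is an absolutely bounded constant times $g_{0}(u)\,g_{1}(u)^{k}$ with $\deg g_{1}\ge 2$, while $c_{4}(h(u))$ stays coprime to $g_{1}(u)$, the model remains minimal at every $p\ge 5$ with bounded non-minimality and conductor exponent at $2,3$, $j(h(u))$ is non-constant, and $h(u)$ avoids the finite exceptional set for all large $|u|$. Writing $r=\deg_{u}\bigl(g_{0}(u)g_{1}(u)\bigr)$ and $D=\max\{3\deg_{u}c_{4}(h(u)),\,2\deg_{u}c_{6}(h(u))\}$, the substitution must be engineered so that $D>6r$. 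This is precisely the step generalizing Masser: for $C_{2}\times C_{2}$, $\Delta$ is already a bounded constant times $\bigl(\lambda\mu(\lambda-\mu)\bigr)^{2}$ and one substitutes a good $ABC$ triple; for the larger $T$ the families grow more rigid, so one must manufacture a high multiple root of $\Delta$ by hand, and I expect this to require a different ad hoc $h$ for each of $C_{4},\dots,C_{10},C_{12}$ and $C_{2}\times C_{4},C_{2}\times C_{6},C_{2}\times C_{8}$. Producing these substitutions uniformly over all fifteen cases, together with the $2$- and $3$-adic minimality and conductor bookkeeping along each $h$, is the main obstacle; everything else is formal.

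Granting such an $h$, goodness follows by a degree count. For $|u|$ large the curve $E:=E_{T}(h(u))$ has $E(\mathbb{Q})_{\mathrm{tors}}\cong T$, and up to an absolutely bounded factor its minimal invariants coincide with those of the model above, so $N_{E}^{6}\le 2^{6a}3^{6b}\operatorname{rad}(\Delta(h(u)))^{6}=O(|u|^{6r})$ while $\max\{|c_{4}^{3}|,c_{6}^{2}\}\gg|u|^{D}$; since $D>6r$, we get $N_{E}^{6}<\max\{|c_{4}^{3}|,c_{6}^{2}\}$, i.e.\ $E$ is good. Finally, $j(h(u))$ being non-constant, these curves realize infinitely many $j$-invariants and so are pairwise non-isomorphic for infinitely many $u$, each with torsion exactly $T$, which gives the theorem.
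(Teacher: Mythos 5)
Your overall architecture (explicit families realizing each $T$, then a specialization making the curves good) matches the paper's, but the mechanism you propose for goodness cannot work. You ask for a one-parameter substitution $t=h(u)$ with $D=\max\{3\deg_{u}c_{4},\,2\deg_{u}c_{6}\}>6r$, where $r=\deg_{u}\operatorname{rad}\bigl(\Delta(h(u))\bigr)=\deg_u(g_0g_1)$, the invariants essentially coprime, and $j(h(u))$ non-constant. No such $h$ exists: this is exactly what the Mason--Stothers theorem (equivalently, the function-field Szpiro inequality, which is a theorem) forbids. Applying Mason--Stothers to $c_{6}(h(u))^{2}+1728\Delta(h(u))=c_{4}(h(u))^{3}$ with coprime, not-all-constant entries gives $D\le \deg c_{4}+\deg c_{6}+r-1\le \tfrac{D}{3}+\tfrac{D}{2}+r-1$, hence $D\le 6r-6$; a bounded common factor only shifts this by a bounded amount. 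So no amount of engineering multiple factors into $\Delta(h(u))$ helps --- those multiplicities are precisely what the bound already accounts for --- and along any polynomial specialization the asymptotic modified Szpiro ratio is at most $D/r\le 6-6/r<6$. The step you call ``the main obstacle'' is not hard bookkeeping; it is impossible, and with it the whole degree-count argument for $N_E^6<\max\{|c_4^3|,c_6^2\}$ collapses (goodness could then only occur for sporadic small $u$, not for all large $|u|$). A secondary gap: your claim that the locus of fibers with torsion strictly larger than $T$ is finite is false for several $T$, since the relevant modular curves (e.g. $X_1(10)$ over the $C_5$ family, or $X_1(2N')$ over the $C_1,C_2,C_3,C_4$ families) can have genus $0$ and infinitely many rational points; the exceptional locus is infinite and must be avoided by hand.

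The fix is the one you only sketch for $C_{2}\times C_{2}$, and it is what Masser did and what the paper does for all fifteen $T$: the saving must be arithmetic, not geometric. Keep the two-parameter families $F_{T}(a,b)$, whose minimal discriminant $D_{T}(a,b)$ is, up to a low-degree cofactor $\hat{D}_{T}(a,b)$, built from high powers of $a$, $b$, $a+b$, and specialize at \emph{good $ABC$ triples} $(a,b,a+b)$: then $\operatorname{rad}\bigl(ab(a+b)\bigr)<a+b$ yields $\operatorname{rad}(D_{T})<|\hat{D}_{T}|$ (the paper's Lemma \ref{lemmadisc}), while $\hat{D}_{T}^{6}<A_{T}^{3}=\max\{|c_4^3|,c_6^2\}$ on the relevant region (Corollary \ref{corhT}), and semistability gives $N_{T}=\operatorname{rad}(D_{T})$, so $F_T(a,b)$ is good --- the inequality your degree count cannot see is exactly the hypothesis that the input triple is good. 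Infinitude of admissible inputs is then supplied by the recursion $(a,b)\mapsto\bigl(1728D_{T}(a,b),\,B_{T}(a,b)^{2},\,A_{T}(a,b)^{3}\bigr)$, which the paper proves stays good and preserves the needed congruences and size conditions (Theorem \ref{excepseq}); the $C_{5}$ case is handled separately via $b=2^{n}$ and additive reduction at $5$, and exactness of the torsion (your second gap) is settled by isogeny-class/isogeny-torsion-graph arguments and a direct division-polynomial check rather than by finiteness of points on modular curves.
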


Theorem $1$ is equivalent to Theorem \ref{goodtorsionmainthm}, where the
result is given in its constructive form. As a consequence of this result, we
have that for each nonnegative integer $n$, the elliptic curve%
\[
E_{n}:y^{2}+\left(  1-2^{n+15}\right)  xy-2^{n+15}%
=x^{3}-2^{n+15}x^{2}%
\]
is good with $E_{n}\!\left(
%TCIMACRO{\U{211a} }%
%BeginExpansion
\mathbb{Q}
%EndExpansion
\right)  _{\text{tors}}\cong C_{5}$. We note that the method of proof
presented here for Theorem $1$ differs from the proof found in
\cite{barrios2019constructive} for $T=C_{2}\times C_{2N}$ with $N=1,2,3,$ or
$4$. This result relied on the full $2$-torsion structure of the models
considered, whereas our approach for Theorem $1$ will rely on the identity
$c_{4}^{3}-c_{6}^{2}=1728\Delta$ being a good $ABC$ triple. Specifically, we
consider parameterized families of elliptic curves $F_{T}=F_{T}\!\left(  a,b\right)  $ (see Table
\ref{ta:FTmodel}) where $T$ is one of the fifteen torsion subgroups $T$
allowed by Mazur's Torsion Theorem. In Section \ref{sec:models}, we show that
under certain assumptions on $a$ and $b$, it is the case that $F_{T}\!\left(
%TCIMACRO{\U{211a} }%
%BeginExpansion
\mathbb{Q}
%EndExpansion
\right)  _{\text{tors}}\cong T$. In section \ref{sec:ABCtriples}, we study the
identity $1728\Delta_{T}+c_{6,T}^{2}=c_{4,T}^{3}$ where $c_{4,T}%
=c_{4,T}\!\left(  a,b\right)  $ and $c_{6,T}=c_{6,T}\!\left(  a,b\right)  $
are the invariants associated to a global minimal model of $F_{T}$ and
$\Delta_{T}=\Delta_{T}\!\left(  a,b\right)  $ is the minimal discriminant of
$F_{T}$. Under further assumptions on $a$ and $b$, we show that this is a good
$ABC$ triple. We conclude this section by showing that for each $T\neq
C_{1},C_{2},C_{5}$, there are relatively prime integers $a_{0}$ and $b_{0}$
such that the sequence $P_{n}^{T}=\left(  a_{n},b_{n},a_{n}+b_{n}\right)  $ of
$ABC$ triples defined recursively for $n\geq1$ by
\[
P_{n}^{T}=\left(  a_{n},b_{n},c_{n}\right)  =\left(  1728\Delta_{T}\!\left(
a_{n-1},b_{n-1}\right)  ,c_{6,T}\!\left(  a_{n-1},b_{n-1}\right)  ^{2}%
,c_{4,T}\!\left(  a_{n-1},b_{n-1}\right)  ^{3}\right)
\]
is a good $ABC$ triple. In Section \ref{InfManyGood}, we use these infinite
sequences of good $ABC$ triples to prove Theorem $1$. We also show the
following result, which allows us to find good $ABC$ triples from good
elliptic curves:

\begin{claim}
[Lemma 2.]Let $E$ be a good semistable elliptic curve with minimal
discriminant $\Delta\equiv0\ \operatorname{mod}6$. Let $c_{4}$ and $c_{6}$ be
the invariants associated to a global minimal model of $E$. Then, the $ABC$
triple $\left(  1728\Delta,c_{6}^{2},c_{4}^{3}\right)  $ is good.
\end{claim}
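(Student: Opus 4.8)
The plan is to first show that $\left(1728\Delta,c_{6}^{2},c_{4}^{3}\right)$ is genuinely an $ABC$ triple, and then to bound its radical using the goodness of $E$ together with the fact that the conductor of a semistable curve is the radical of its minimal discriminant. The only two arithmetic inputs are the classification of reduction types via a minimal model and the multiplicativity of $\operatorname{rad}$ on coprime integers.

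First I would record the coprimality facts. By Tate's algorithm, a prime $p$ dividing both $\Delta$ and $c_{4}$ forces additive reduction at $p$ (in a minimal model), so semistability gives $\gcd(c_{4},\Delta)=1$; since $6\mid\Delta$, the same criterion at $p=2,3$ gives $2\nmid c_{4}$ and $3\nmid c_{4}$, hence $\gcd(c_{4},6\Delta)=1$. Feeding this into the identity $c_{4}^{3}=c_{6}^{2}+1728\Delta$ shows any prime dividing $c_{6}$ and $\Delta$ would also divide $c_{4}$, which is impossible; together with $6\mid\Delta$ this yields $\gcd(c_{6},6\Delta)=1$, and one more use of the identity gives $\gcd(c_{4},c_{6})=1$. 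Therefore $1728\Delta$, $c_{6}^{2}$, $c_{4}^{3}$ are pairwise coprime and satisfy $1728\Delta+c_{6}^{2}=c_{4}^{3}$, so they form an $ABC$ triple (none of the three is $0$, since equality of two coprime entries would force $1728\Delta=\pm1$).

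Next I would compute the radical. Pairwise coprimality makes $\operatorname{rad}$ multiplicative, so $\operatorname{rad}\!\left(1728\Delta\cdot c_{6}^{2}\cdot c_{4}^{3}\right)=\operatorname{rad}\!\left(1728\Delta\right)\operatorname{rad}\!\left(c_{6}\right)\operatorname{rad}\!\left(c_{4}\right)$. Because $6\mid\Delta$, the prime factors $2,3$ of $1728$ already divide $\Delta$, so $\operatorname{rad}\!\left(1728\Delta\right)=\operatorname{rad}\!\left(\Delta\right)$; and for a semistable curve the conductor equals the radical of the minimal discriminant, so $\operatorname{rad}\!\left(1728\Delta\right)=N_{E}$. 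Hence $\operatorname{rad}\!\left(1728\Delta\cdot c_{6}^{2}\cdot c_{4}^{3}\right)=N_{E}\cdot\operatorname{rad}\!\left(c_{4}\right)\cdot\operatorname{rad}\!\left(c_{6}\right)$.

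Finally I would combine the estimates using the exponent bookkeeping $\tfrac{1}{6}+\tfrac{1}{3}+\tfrac{1}{2}=1$. Put $M_{0}=\max\!\left\{\left\vert c_{4}\right\vert^{3},c_{6}^{2}\right\}$. Goodness of $E$ gives $N_{E}^{6}<M_{0}$, i.e. $N_{E}<M_{0}^{1/6}$; trivially $\operatorname{rad}\!\left(c_{4}\right)\le\left\vert c_{4}\right\vert\le M_{0}^{1/3}$ and $\operatorname{rad}\!\left(c_{6}\right)\le\left\vert c_{6}\right\vert\le M_{0}^{1/2}$. Multiplying, $\operatorname{rad}\!\left(1728\Delta\cdot c_{6}^{2}\cdot c_{4}^{3}\right)<M_{0}^{1/6}M_{0}^{1/3}M_{0}^{1/2}=M_{0}\le\max\!\left\{1728\left\vert\Delta\right\vert,c_{6}^{2},\left\vert c_{4}\right\vert^{3}\right\}$, which is exactly the assertion that the $ABC$ triple is good. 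The main obstacle is not analytic at all but a matter of care: one must make sure the reduction-type criterion ``$p\mid\gcd(\Delta,c_{4})\Rightarrow$ additive reduction'' is applied correctly at the primes $2$ and $3$ for a minimal model, since this is exactly where the hypotheses $6\mid\Delta$ and semistability interact; everything after that is multiplicativity of $\operatorname{rad}$ and the clean exponent count above.
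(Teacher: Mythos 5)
Your proof is correct and follows essentially the same route as the paper: reduce to $\operatorname{rad}(1728\Delta\, c_{6}^{2} c_{4}^{3})\le N_{E}\left\vert c_{4}\right\vert\left\vert c_{6}\right\vert$ using semistability and $6\mid\Delta$ (so $\operatorname{rad}(1728\Delta)=N_{E}$), then beat $\max\{\left\vert c_{4}^{3}\right\vert,c_{6}^{2}\}$ via $N_{E}^{6}<\max\{\left\vert c_{4}^{3}\right\vert,c_{6}^{2}\}$. The only differences are cosmetic: you spell out the pairwise coprimality that the paper merely asserts, and you replace the paper's two-case comparison with the uniform exponent count $\tfrac{1}{6}+\tfrac{1}{3}+\tfrac{1}{2}=1$.
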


This is Lemma \ref{LemSS}, and we show that the assumption $\Delta
\equiv0\ \operatorname{mod}6$ is necessary. In Section~\ref{sec:examples}, we
construct good elliptic curves by considering all $ABC$ triples $\left(
a,b,c\right)  $ with $0<a<b<c\leq 1.2\cdot10^{11}$. This data is attained from the
ABC@Home Project \cite{Smit}, which computed all good $ABC$ triples with $c<10^{18}$.
For each $ABC$ triple with $c \leq 1.2\cdot10^{11}$, we create a database consisting
of the $%
%TCIMACRO{\U{211a} }%
%BeginExpansion
\mathbb{Q}
%EndExpansion
$-isomorphism class of $F_{T}\!\left(  a,b\right)  $ and $F_{T}\!\left(
b,a\right)  $. By computing the modified Szpiro ratio of these elliptic
curves, we find $2 \ 347 \ 350$ good elliptic curves.

\section{Preliminaries}

We start by reviewing some facts and terminology about $ABC$ triples as well as elliptic
curves. For further details, see \cite{MR3584566}, \cite{MR2514094}.

By an $ABC$\textit{ triple}, we mean a triple $\left(  a,b,c\right)  $ of
integers such that $a+b=c$ and $\gcd\!\left(  a,b,c\right)  =1$. An
$ABC$ triple is said to be \textit{positive} if $a,b,c>0$. If $\operatorname{rad}%
\!\left(  abc\right)  <\max\!\left\{  \left\vert a\right\vert ,\left\vert
b\right\vert ,\left\vert c\right\vert \right\}  $, then the $ABC$ triple is
said to be \textit{good}. Here $\operatorname{rad}\!\left(  n\right)  $
denotes the product of the distinct prime factors of $n$. For instance, the
$ABC$ triple $\left(  1,8,9\right)  $ is a good positive $ABC$ triple since
$\operatorname{rad}\!\left(  1\cdot8\cdot9\right)  =6<9$. In~\cite{MR3298238},
it is shown that the $ABC$ triple $\left(  1,9^{n}-1,9^{n}\right)  $ is a good
$ABC$ triple for each positive integer $n$. In fact, this result generalizes
as follows:

\begin{lemma}
Let $p$ be an odd prime. Then $\left(  1,p^{\left(  p-1\right)  k}%
-1,p^{\left(  p-1\right)  k}\right)  $ is a good positive $ABC$ triple for
each positive integer $k$.
\end{lemma}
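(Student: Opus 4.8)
The plan is to show that $\operatorname{rad}\!\left(abc\right) < c$ for the triple $\left(a,b,c\right) = \left(1, p^{\left(p-1\right)k}-1, p^{\left(p-1\right)k}\right)$. First I would observe that this is indeed an $ABC$ triple: the three entries are positive, consecutive integers $p^{\left(p-1\right)k}-1$ and $p^{\left(p-1\right)k}$ are coprime, and $1$ is coprime to everything, so $\gcd$ of all three is $1$. Since $a = 1$ contributes nothing to the radical and $\operatorname{rad}$ is multiplicative on coprime arguments, we have $\operatorname{rad}\!\left(abc\right) = \operatorname{rad}\!\left(p^{\left(p-1\right)k}\right)\cdot\operatorname{rad}\!\left(p^{\left(p-1\right)k}-1\right) = p\cdot\operatorname{rad}\!\left(p^{\left(p-1\right)k}-1\right)$.

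The key arithmetic input is that $p \mid p^{\left(p-1\right)k}-1 - ?$… more precisely, I want a large prime-power factor of $p^{\left(p-1\right)k}-1$ so that its radical is much smaller than the number itself. The natural tool is Euler's theorem / Fermat: since $\gcd(p, p^{(p-1)k-1}) $ is not quite what I want, instead consider the prime $p$ itself acting on the base $p^{(p-1)k} - 1 \equiv -1$, which is not zero mod $p$. The right observation is to factor out a power of $p$ from $\left(p^{\left(p-1\right)k}-1\right)$ in a different sense: note that by Fermat's little theorem applied with modulus $p^{2}$ (lifting the exponent), since the multiplicative order considerations give $p^{\left(p-1\right)} \equiv 1 \pmod{p^{2}}$ is false in general, but rather one uses that $p-1 \mid \varphi(p^2) = p(p-1)$, hence... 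Actually the cleanest route: set $m = p^{\left(p-1\right)k}-1$. I claim $p^{k} \mid m$. Indeed, $p^{\left(p-1\right)k} = \left(p^{p-1}\right)^{k}$, and writing $p^{p-1} = 1 + p\cdot t$ for an integer $t$ with $p \nmid t$ (this needs $p$ odd so that the binomial/LTE bookkeeping is clean), the Lifting the Exponent Lemma gives $v_{p}\!\left(\left(p^{p-1}\right)^{k} - 1^{k}\right) = v_{p}\!\left(p^{p-1}-1\right) + v_{p}(k)$. Hmm — but $v_p(p^{p-1}-1) = 0$ since $p^{p-1} \equiv 0 \pmod p$ means $p^{p-1}-1 \equiv -1$. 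So that is not the prime to use either.

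Let me reconsider: the relevant prime is not $p$. Take any prime $q \mid p^{\left(p-1\right)\cdot 1}-1 = p^{p-1}-1$; such a $q$ exists and $q \neq p$. By LTE (for odd $q$, or the $q=2$ case handled separately), $v_{q}\!\left(p^{\left(p-1\right)k}-1\right) = v_{q}\!\left(p^{p-1}-1\right) + v_{q}(k)$, which is unbounded as $k$ ranges over multiples of $q$. But to beat the radical uniformly in $k$ I want a cleaner statement. The simplest sufficient estimate: $\operatorname{rad}\!\left(p^{\left(p-1\right)k}-1\right) \le \dfrac{p^{\left(p-1\right)k}-1}{\left(\text{largest repeated-prime factor}\right)}$, and it suffices to exhibit one fixed prime $q \ne p$ with $q^{2} \mid p^{\left(p-1\right)k}-1$ for all $k\ge 1$, giving $\operatorname{rad}\!\left(abc\right) \le p\cdot\dfrac{p^{\left(p-1\right)k}-1}{q} < \dfrac{p}{q}\cdot p^{\left(p-1\right)k} = \dfrac{p}{q}\,c$. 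Since $p \ge 3$, for this to yield $< c$ I need $q > p$, which is not automatic. So instead I would run the estimate with $q = p$ replaced by the observation $\operatorname{rad}(m) \le m$ always, combined with a factor-of-$p^2$ type gain: rewrite $c = p^{\left(p-1\right)k}$, so $\operatorname{rad}(c) = p$, and I only need $p \cdot \operatorname{rad}\!\left(p^{\left(p-1\right)k}-1\right) < p^{\left(p-1\right)k}$, i.e. $\operatorname{rad}\!\left(p^{\left(p-1\right)k}-1\right) < p^{\left(p-1\right)k - 1}$. Since $p^{\left(p-1\right)k}-1 < p^{\left(p-1\right)k}$, it suffices that $p^{\left(p-1\right)k}-1$ have radical at most $\tfrac1p$ of itself, i.e. some prime square dividing it — and here the right prime IS available: by Fermat's little theorem $p^{p-1} \equiv 1 \pmod{?}$; concretely, for each prime $\ell \mid p-1$ one analyzes $v_\ell$, but the truly uniform bound comes from noting $\left(p-1\right) \mid p^{\left(p-1\right)k} - p^{\left(p-1\right)(k-1)}$ chains...

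I expect the main obstacle to be exactly this: pinning down a clean, $k$-uniform reason that $p^{\left(p-1\right)k}-1$ is sufficiently non-squarefree to force its radical below $p^{\left(p-1\right)k-1}$. The intended argument almost certainly factors $p^{\left(p-1\right)k}-1 = \left(p^{p-1}\right)^{k}-1$ divisibly by $\left(p^{p-1}-1\right)^{\phantom{k}}$ and exploits that $p^{p-1}-1$ is itself divisible by $p-1$ (trivially) and more importantly that repeated cyclotomic factors $\Phi_{d}\!\left(p^{p-1}\right)$ share common prime factors, or more simply invokes Lemma~1.1's special case $p^{p-1}\equiv 1$ structure via LTE to get $v_{q}\bigl(p^{\left(p-1\right)k}-1\bigr) \ge v_{q}\bigl(p^{p-1}-1\bigr) + v_q(k) \ge 2$ whenever $q \mid p^{p-1}-1$ and $q \mid k$, then noting that in fact for the radical bound one may simply take $k$ arbitrary by using that $\operatorname{rad}\bigl(p^{(p-1)k}-1\bigr) \mid \frac{p^{(p-1)k}-1}{q^{v_q - 1}}$ and choosing $q$ to be the smallest prime factor of $p^{p-1}-1$, which is at most $p^{p-1}-1 < $ anything relevant. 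The bookkeeping to make the inequality $\operatorname{rad}(abc) < c$ come out strictly, with the $p$ from $\operatorname{rad}(c)$ correctly absorbed, is the one place I would be careful; everything else (coprimality, positivity, multiplicativity of $\operatorname{rad}$) is immediate. I would present it by: (i) verify it is a positive $ABC$ triple; (ii) reduce to $\operatorname{rad}\!\left(p^{\left(p-1\right)k}-1\right) < p^{\left(p-1\right)k-1}$; (iii) produce the repeated prime factor uniformly in $k$ via LTE starting from the $k=1$ case; (iv) assemble the strict inequality.
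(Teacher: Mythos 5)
There is a genuine gap. You correctly reduce the problem to showing $\operatorname{rad}\!\left(p^{(p-1)k}-1\right)<p^{(p-1)k-1}$, i.e.\ to absorbing the factor $p=\operatorname{rad}\!\left(p^{(p-1)k}\right)$, but you never produce a mechanism that does this uniformly in $k$, and the mechanisms you float cannot. A single fixed prime $q$ with $q^{2}\mid p^{(p-1)k}-1$ for all $k$ buys only a factor of roughly $q$, and, as you note yourself, that would require $q>p$; the only prime guaranteed to occur with multiplicity for every $k$ is $q=2$ (since $p^{2}-1$ divides $p^{(p-1)k}-1$ and $8\mid p^{2}-1$), which is useless once $p\geq5$. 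Lifting the exponent at a prime $q\mid p^{p-1}-1$ gives $v_{q}\bigl((p^{p-1})^{k}-1\bigr)=v_{q}\!\left(p^{p-1}-1\right)+v_{q}(k)$, so the extra multiplicity appears only when $q\mid k$ --- exactly the non-uniformity you flag as ``the main obstacle'' and never resolve. Step (iii) of your plan (``produce the repeated prime factor uniformly in $k$ via LTE starting from the $k=1$ case'') is therefore a restatement of the missing idea, not an argument, and the closing remark about the smallest prime factor of $p^{p-1}-1$ does not yield any bound exceeding $p$.

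The paper gets the absorption not from one large repeated prime but from an aggregate factor. Write $p^{(p-1)k}-1=(p-1)P$ with $P=\sum_{j=1}^{(p-1)k}p^{j-1}$. Every prime $q\mid p-1$ satisfies $P\equiv(p-1)k\equiv0\pmod q$, so $\operatorname{rad}\!\left((p-1)P\right)=\operatorname{rad}(P)$, which already gains the factor $p-1$; moreover $P\equiv0\pmod{p+1}$ (the sum has the even number $(p-1)k$ of alternating terms mod $p+1$), so $P$ carries powers of $2$ beyond its radical, and the paper extracts a further factor (it uses $4$, though even a factor $2$ suffices). This yields $\operatorname{rad}\!\left(p^{(p-1)k}-1\right)\leq\frac{p^{(p-1)k}-1}{c(p-1)}$ with $c\geq2$, and since $c(p-1)>p$ one gets $p\cdot\operatorname{rad}\!\left(p^{(p-1)k}-1\right)<p^{(p-1)k}$, which is the goodness inequality. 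In your LTE language, the uniform-in-$k$ repetition you were hunting for lives at the primes dividing $p-1$, not at arbitrary primes of $p^{p-1}-1$: because the exponent $(p-1)k$ is itself divisible by $p-1$, one has $v_{q}\!\left(p^{(p-1)k}-1\right)\geq2v_{q}(p-1)$ for every $q\mid p-1$ with no condition on $k$, and it is the product of all these repeated small primes (about $p-1$ times a power of $2$), not any single one, that exceeds $p$.
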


\begin{proof}
Note that $p^{\left(  p-1\right)  k}-1=\left(  p-1\right)  P$ where
$P=\sum_{j=1}^{\left(  p-1\right)  k}p^{j-1}$. Since $p\equiv\mp
1\ \operatorname{mod}\left(  p\pm1\right)  $, it follows that $P\equiv0$
$\operatorname{mod}\left(  p\pm1\right)  $. In particular, $P\equiv
0\ \operatorname{mod}8$ and thus%
\begin{equation}
\operatorname{rad}\!\left(  p^{\left(  p-1\right)  k}-1\right)
=\operatorname{rad}\!\left(  \frac{P}{4}\right)  \leq\frac{p^{\left(
p-1\right)  k}-1}{4\left(  p-1\right)  }.\label{exaabc1}%
\end{equation}
Moreover, $\operatorname{rad}\!\left(  \left(  p^{\left(  p-1\right)
k}-1\right)  p^{\left(  p-1\right)  k}\right)  =p\operatorname{rad}\!\left(
\left(  p^{\left(  p-1\right)  k}-1\right)  \right)  $. The result now follows
since%
\begin{align*}
p^{\left(  p-1\right)  k}-p\operatorname{rad}\!\left(  \left(  p^{\left(
p-1\right)  k}-1\right)  \right)   &  \geq p^{\left(  p-1\right)  k}%
-\frac{p^{\left(  p-1\right)  k+1}-p}{4\left(  p-1\right)  }\text{ by
(\ref{exaabc1})}\\
&  =p^{\left(  p-1\right)  k}\left(  1-\frac{p}{4\left(  p-1\right)  }\right)
+\frac{p}{4\left(  p-1\right)  }>0. \qedhere
\end{align*}
\end{proof}

For an $ABC$ triple $P=\left(  a,b,c\right)  $, we define the \textit{quality}
of $P$, denoted $q\!\left(  P\right)  $, to be the quantity%
\[
q\!\left(  P\right)  =\frac{\log\!\left(  \max\!\left\{  \left\vert
a\right\vert ,\left\vert b\right\vert ,\left\vert c\right\vert \right\}
\right)  }{\log\!\left(  \operatorname{rad}\!\left(  abc\right)  \right)  }.
\]
Consequently, an $ABC$ triple $P$ is good if and only if $q\!\left(  P\right)
>1$.

Next, we say an elliptic curve $E$ is defined over $%
%TCIMACRO{\U{211a} }%
%BeginExpansion
\mathbb{Q}
%EndExpansion
$ if $E$ is given by an (affine) Weierstrass model%
\begin{equation}
E:y^{2}+a_{1}xy+a_{3}y=x^{3}+a_{2}x^{2}+a_{4}x+a_{6} \label{ch:inintroweier}%
\end{equation}
with each $a_{j}\in%
%TCIMACRO{\U{211a} }%
%BeginExpansion
\mathbb{Q}
%EndExpansion
$. From the Weierstrass coefficients $a_{j}$, one defines the quantities%
\begin{equation}%
\begin{array}
[c]{l}%
c_{4}=a_{1}^{4}+8a_{1}^{2}a_{2}-24a_{1}a_{3}+16a_{2}^{2}-48a_{4},\\
c_{6}=-\left(  a_{1}^{2}+4a_{2}\right)  ^{3}+36\left(  a_{1}^{2}%
+4a_{2}\right)  \left(  2a_{4}+a_{1}a_{3}\right)  -216\left(  a_{3}^{2}%
+4a_{6}\right)  ,\\
\Delta=\frac{c_{4}^{3}-c_{6}^{2}}{1728}.
\end{array}
\label{basicformulas}%
\end{equation}
We call $\Delta$ the \textit{discriminant} of $E$, and the quantities $c_{4}$
and $c_{6}$ are the \textit{invariants associated to the Weierstrass model} of
$E$. Observe that from (\ref{basicformulas}), we have the identity
$1728\Delta=c_{4}^{3}-c_{6}^{2}$. By the Mordell-Weil Theorem, the group of
rational points on $E$, denoted by $E\!\left(
%TCIMACRO{\U{211a} }%
%BeginExpansion
\mathbb{Q}
%EndExpansion
\right)  $, is a finitely generated abelian group. We denote the torsion
subgroup of $E\!\left(
%TCIMACRO{\U{211a} }%
%BeginExpansion
\mathbb{Q}
%EndExpansion
\right)  $ by $E\!\left(
%TCIMACRO{\U{211a} }%
%BeginExpansion
\mathbb{Q}
%EndExpansion
\right)  _{\text{tors}}$. By Mazur's Torsion\ Theorem \cite{MR488287}, there
are precisely fifteen possibilities for $E\!\left(
%TCIMACRO{\U{211a} }%
%BeginExpansion
\mathbb{Q}
%EndExpansion
\right)  _{\text{tors}}$.

\begin{theorem}
[Mazur's Torsion Theorem]\label{torsiontheorem}Let $E$ be an elliptic curve defined
over $%
%TCIMACRO{\U{211a} }%
%BeginExpansion
\mathbb{Q}
%EndExpansion
$. Then%
\[
E\!\left(
%TCIMACRO{\U{211a} }%
%BeginExpansion
\mathbb{Q}
%EndExpansion
\right)  _{\text{tors}}\cong\left\{
\begin{array}
[c]{ll}%
C_{N} & \text{for }N=1,2,\ldots,10,12,\\
C_{2}\times C_{2N} & \text{for }N=1,2,3,4.
\end{array}
\right.
\]

\end{theorem}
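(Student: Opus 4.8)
The result is Mazur's theorem, and the natural strategy is to translate torsion data into rational points on modular curves and then study those points. I would begin with the existence half. An elliptic curve $E/\mathbb{Q}$ together with a rational point of exact order $N\geq 4$ corresponds to a non-cuspidal $\mathbb{Q}$-point on the modular curve $X_1(N)$, and an $E$ with $C_2\times C_{2N}\hookrightarrow E(\mathbb{Q})$ gives a $\mathbb{Q}$-point on the curve $X_1(2,2N)$ classifying such structures. For $N\in\{1,\dots,10,12\}$ the curve $X_1(N)$ has genus $0$ and a rational cusp, hence $X_1(N)\cong\mathbb{P}^1_{\mathbb{Q}}$; writing the universal curve over this line (Tate's normal form, as in Kubert's tables) produces an explicit one-parameter family of pairwise non-isomorphic elliptic curves with a rational $N$-torsion point, and the same works for $X_1(2,2N)\cong\mathbb{P}^1_{\mathbb{Q}}$ when $N\in\{1,2,3,4\}$. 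This shows all fifteen groups occur.

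Next I would reduce the non-occurrence half to finitely many critical cases. The structure theorem for finite abelian groups together with the Weil pairing — which forces $\mu_a\subset\mathbb{Q}$, hence $a\leq 2$, whenever $C_a\times C_a\hookrightarrow E(\mathbb{Q})$ — shows that $E(\mathbb{Q})_{\mathrm{tors}}$ is either cyclic or of the form $C_2\times C_{2m}$. Since a point of order $dN$ produces a point of order $N$ (and a point of order $p^2$ produces a rational cyclic subgroup of order $p^2$), excluding every group outside the list reduces to: $C_p$ for primes $p\geq 11$; $C_N$ for $N$ in a short list of composite orders all of whose divisor-torsion is allowed (such as $14,15,16,18,20,21,24,25,27$); and $C_2\times C_{2N}$ for $N\geq 5$, of which only $N=5,6$ require separate attention. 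In each of these cases the relevant modular curve has genus $\geq 1$ — genus exactly $1$ for $X_1(11),X_1(14),X_1(15)$ and genus $\geq 2$ otherwise — so it remains to prove each such curve has no non-cuspidal rational point.

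The heart of the matter is disposing of these curves. For the three genus-one curves I would exhibit Weierstrass models, check that each Jacobian has Mordell–Weil rank $0$ over $\mathbb{Q}$ (by descent, or by observing the central $L$-value is nonzero), list the finitely many rational points, and verify directly that all of them are cusps. For the curves of genus $\geq 2$, Faltings' theorem gives finiteness of $X(\mathbb{Q})$ but not the location of the points, so one needs Mazur's Eisenstein-ideal method: pass from $X_1(p)$ to $X_0(p)$, compose the Abel–Jacobi embedding based at a cusp with the quotient map from $J_0(p)$ onto its Eisenstein quotient $\tilde J$ (equivalently, its winding quotient), prove $\tilde J(\mathbb{Q})$ is finite (via the nonvanishing of the relevant $L$-values, or Mazur's original argument for the Eisenstein quotient), and verify that the composite $X_0(p)\to\tilde J$ is a formal immersion at the cusp after reduction modulo a well-chosen small prime $\ell$. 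A specialization argument then forces any hypothetical non-cuspidal rational point to reduce to the cusp modulo every prime of good reduction, which is impossible; the CM points on $X_0(p)$ survive but correspond to curves with a rational $p$-isogeny rather than a rational $p$-torsion point, so they do not interfere. The remaining composite and prime-power critical cases are treated by the analogous analysis on the appropriate modular curves and their quotients (as in the work of Kubert and Kenku).

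I expect essentially all the difficulty to live in this last step, and specifically in two places: proving that $\tilde J$ has Mordell–Weil rank $0$, and establishing the formal-immersion criterion at the cusp. These are exactly where the fine structure of the Eisenstein ideal, the cuspidal subgroup of $J_0(p)$, and the geometry of $X_0(p)$ over $\mathbb{F}_\ell$ enter, and they form the technical core of Mazur's paper; the modular translation, the group-theoretic reduction, and the genus-$0$ parametrizations are comparatively routine.
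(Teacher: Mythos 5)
The paper does not prove this statement at all: it is Mazur's Torsion Theorem, quoted verbatim with a citation to \cite{MR488287} and used as a black box throughout (e.g.\ in the proofs of Proposition \ref{Propmin} and Theorem \ref{torsiso}). So there is no internal proof to compare your attempt against; the only fair comparison is with Mazur's original argument and its completions by Kubert and Kenku, which is exactly the route you sketch.

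As a roadmap your proposal is accurate: the genus-$0$ parametrizations of $X_1(N)$ and $X_1(2,2N)$ for the fifteen admissible groups, the Weil-pairing reduction to cyclic groups and $C_2\times C_{2m}$, the reduction to finitely many critical levels, the genus-$1$ cases $X_1(11),X_1(14),X_1(15)$ by rank-$0$ descent, and the prime levels via the Eisenstein (winding) quotient together with a formal-immersion argument at the cusp. But you should be clear that, as written, this is an outline and not a proof: the two steps you yourself flag as the heart of the matter --- finiteness of the Mordell--Weil group of the Eisenstein quotient and the formal-immersion criterion modulo a suitable prime --- are asserted, not established, and they constitute essentially all of the content of Mazur's paper; likewise the composite and prime-power levels are waved at via ``the analogous analysis'' rather than carried out (this is a genuinely separate body of work due to Kubert and Kenku). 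One smaller imprecision: for the torsion theorem the specialization/formal-immersion argument is run so as to exclude all non-cuspidal rational points of $X_1(p)$ (or the relevant quotient); the remark about CM points corresponding to rational $p$-isogenies belongs to Mazur's isogeny theorem on $X_0(p)$ rather than to the torsion statement you are proving. In the context of this paper, none of this matters --- the result is correctly treated as an external citation --- but your write-up should be presented as a summary of Mazur's proof, not as an independent verification of it.
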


We say $E$ is given by an \textit{integral Weierstrass model} if each
Weierstrass coefficient $a_{j}\in%
%TCIMACRO{\U{2124} }%
%BeginExpansion
\mathbb{Z}
%EndExpansion
$. Two curves $E$ and $E^{\prime}$ are $\mathbb{Q}$\textit{-isomorphic} if $E^{\prime}$ can be attained from $E$ via an
admissible change of variables $x\longmapsto u^{2}x+r$ and $y\longmapsto
u^{3}y+u^{2}sx+w$ for $u,r,s,w\in\mathbb{Q}$ and $u\neq0$. Each elliptic curve $E/\mathbb{Q}$ is $\mathbb{Q}$-isomorphic to a \textit{global minimal model} $E^{\text{min}}$ where
$E^{\text{min}}$ is given by an integral Weierstrass model with the property
that its discriminant $\Delta$ satisfies the condition that $\left\vert
\Delta\right\vert $ is minimal over all $\mathbb{Q}$-isomorphic elliptic curves to $E$ that are given by an integral Weierstrass model. We call the discriminant of
$E^{\text{min}}$ the \textit{minimal discriminant} of $E$.

Next, let $\Delta$ and $c_{4}$ be associated to a global minimal model of $E$.
We say that $E$ has \textit{additive reduction} at a prime $p$ if $p$ divides
$\gcd\!\left(  \Delta,c_{4}\right)  $. If $E$ does not have additive reduction
at $p$, then $E$ is said to be \textit{semistable at }$p$. We say $E$ is
\textit{semistable} if $E$ is semistable at all primes. The \textit{conductor}
$N_{E}$ of $E$ is the quantity%
\[
N_{E}=\prod_{p|\Delta}p^{f_{p}}%
\]
where $f_{p}$ is a positive integer. The conductor exponent $f_{p}$ is
computed via Tate's Algorithm \cite{MR0393039} and has the property that
$f_{p}>1$ if and only if $E$ has additive reduction at $p$. In fact,
$f_{p}\leq2$ if $p\geq5$. As a consequence, we have that if $E$ is semistable,
then $N_{E}=\operatorname{rad}\!\left(  \Delta\right)  $. An elliptic curve
$E$ is \textit{good} if $N_{E}^{6}<\max\!\left\{  \left\vert c_{4}%
^{3}\right\vert ,c_{6}^{2}\right\}  $ where $c_{4}$ and $c_{6}$ are the
invariants associated to a global minimal model of $E$. The \textit{modified
Szpiro ratio} of $E$ is the quantity%
\[
\sigma_{m}\!\left(  E\right)  =\frac{\log\!\left(  \max\!\left\{  \left\vert
c_{4}^{3}\right\vert ,c_{6}^{2}\right\}  \right)  }{\log\!\left(
N_{E}\right)  }.
\]
In particular, the modified Szpiro ratio is the analog of the quality of an
$ABC$ triple, and we have that $E$ is good if and only if $\sigma_{m}\!\left(
E\right)  >6$.

\section{Models of Elliptic Curves\label{sec:models}}

For each torsion subgroup $T$ allowed by Theorem \ref{torsiontheorem}, let
$F_{T}=F_{T}\!\left(  a,b\right)  $ be the elliptic curve given by the
Weierstrass model found in Table \ref{ta:FTmodel}.
We note that the families of elliptic curves $F_T$ do not correspond to universal elliptic curves with torsion subgroup containing $T$. Instead, the family $F_T$ has the property that under certain conditions on the parameters, we have $F_T(\mathbb{Q})_\text{tors}\cong T$ (see Theorem \ref{torsiso}).
Next, let $A_{T}=A_{T}\!\left(
a,b\right)  ,\ B_{T}=B_{T}\!\left(  a,b\right)  ,$ and $D_{T}=D_{T}\!\left(
a,b\right)  $ be as given in Tables \ref{ta:AT}, \ref{ta:BT}, and \ref{ta:DT},
respectively. Our first result establishes that under certain assumptions on
$a$ and $b$, $D_{T}$ is the minimal discriminant of $F_{T}$ and the invariants
$c_{4}$ and $c_{6}$ associated to a global minimal model of $F_{T}$ are
$A_{T}$ and $B_{T}$, respectively. Consequently, we have the identity
$A_{T}^{3}-B_{T}^{2}=1728D_{T}$. We conclude this section by showing that
$F_{T}\!\left(
%TCIMACRO{\U{211a} }%
%BeginExpansion
\mathbb{Q}
%EndExpansion
\right)  _{\text{tors}}\cong T$.

\begin{proposition}
\label{Propmin}For $T=C_{5}$, let $b=2^{n}$ for $n$ a nonnegative integer. For
$T\neq C_{5}$, let $a$ and $b$ be relatively prime integers with
$a\equiv0\ \operatorname{mod}6$. 

$\left(  1\right)  $ For each $T$, $F_{T}$ is given by an integral Weierstrass
model and $T\hookrightarrow F_{T}\!\left(
%TCIMACRO{\U{211a} }%
%BeginExpansion
\mathbb{Q}
%EndExpansion
\right)  $;

$\left(  2\right)  $ The minimal discriminant of $F_{T}$ is $D_{T}$. The
invariants $c_{4}$ and $c_{6}$ associated to a global minimal model of $F_{T}$
are $A_{T}$ and $B_{T}$, respectively;

$\left(  3\right)  $ Suppose further that $a\equiv
0\ \operatorname{mod}5$ (resp. $a\equiv0\ \operatorname{mod}7$) if $T=C_{10}$
(resp. $T=C_{7}$). If $T\neq C_{5}$, then $F_{T}$ is semistable. For
$T=C_{5}$, $F_{T}$ is semistable at all primes $p\neq5$ and has additive
reduction at $5$. Moreover,
\[
\gcd\!\left(  A_{T}^{3},B_{T}^{2}\right)  =\left\{
\begin{array}
[c]{cl}%
1 & \text{if }T\neq C_{5},\\
125 & \text{if }T=C_{5}.
\end{array}
\right.
\]

\end{proposition}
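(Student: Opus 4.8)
The plan is to prove the three parts in order, leaning on the fact that everything is given by explicit polynomial formulas in $a$ and $b$ recorded in the tables. First I would verify part $(1)$: for each $T$, substitute the Weierstrass coefficients from Table~\ref{ta:FTmodel} into the standard formulas to confirm that all $a_j$ are integers (using only that $a,b\in\mathbb{Z}$, together with the divisibility hypotheses $a\equiv 0\bmod 6$, and $a\equiv 0\bmod 5$ or $7$ in the $C_{10}$, $C_7$ cases, and $b=2^n$ for $C_5$). The embedding $T\hookrightarrow F_T(\mathbb{Q})$ is handled by exhibiting explicit torsion points: these models are built from the parameterized families attached to the modular curves $X_1(N)$, so for the cyclic cases one writes down a rational point $P$ and checks via the group law (or the division polynomials) that it has the stated order; for $C_2\times C_{2N}$ one additionally checks that the relevant $2$-torsion is rational, i.e.\ that the cubic in $x$ factors appropriately over $\mathbb{Q}$.

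Next, for part $(2)$, I would compute $c_4$, $c_6$, and $\Delta$ for the model $F_T$ directly from \eqref{basicformulas} and check the polynomial identities $c_4 = A_T$, $c_6 = B_T$, $\Delta = D_T$ (equivalently $A_T^3 - B_T^2 = 1728 D_T$), which is a finite symbolic verification, one torsion subgroup at a time. The substantive content is that $D_T$ is actually the \emph{minimal} discriminant and $A_T, B_T$ are the invariants of a \emph{global minimal} model. Since $F_T$ is already integral by part $(1)$, it suffices to show the model is minimal at every prime $p$. For this I would invoke Tate's algorithm \cite{MR0393039}: at each prime $p\mid D_T$ one checks that the model cannot be simplified, which amounts to showing one does not have $p^4\mid c_4$ and $p^6\mid c_6$ and $p^{12}\mid\Delta$ simultaneously — and more carefully for $p=2,3$ where non-minimality can hide. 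The cleanest route is to show $\gcd(A_T, D_T)$ (or the valuations thereof) rules out non-minimality; here the coprimality hypothesis $\gcd(a,b)=1$ and the congruence $a\equiv 0\bmod 6$ are exactly what force the valuations of $c_4$ at $2$ and $3$ to be small enough. I expect the prime $p=2$ (and to a lesser extent $p=3$) to be the main obstacle, since that is where minimality is most delicate and where Tate's algorithm branches the most; the hypothesis $a\equiv 0\bmod 6$ (and the special shape $b=2^n$ for $C_5$) is presumably calibrated precisely to make these cases come out cleanly.

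Finally, for part $(3)$: semistability at $p$ means $p\nmid\gcd(\Delta, c_4) = \gcd(D_T, A_T)$, so I would compute $\gcd(A_T, D_T)$ as a polynomial gcd, using $\gcd(a,b)=1$, and show it equals $1$ for $T\neq C_5$ and is supported only at $5$ for $T=C_5$. Then for $T=C_5$ one checks at $p=5$ that $5\mid\gcd(A_{C_5}, D_{C_5})$ (additive reduction) and runs Tate's algorithm at $5$ to confirm the reduction type; the claim $\gcd(A_{C_5}^3, B_{C_5}^2) = 125$ then follows by tracking the $5$-adic valuations of $A_{C_5}$ and $B_{C_5}$ (one shows $v_5(A_{C_5}) = 1$ so $v_5(A_{C_5}^3)=3$, and $v_5(B_{C_5})\ge 2$ with the minimum of $3$ and $2v_5(B_{C_5})$ equal to $3$, i.e.\ $v_5(\gcd) = 3$). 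For $T\neq C_5$ the statement $\gcd(A_T^3, B_T^2)=1$ is immediate from $\gcd(A_T, B_T)=1$, which in turn follows from $\gcd(A_T, D_T)=1$ together with $B_T^2 = A_T^3 - 1728 D_T$. Throughout, the proof is a (lengthy but routine) case analysis over the fifteen values of $T$; the only real mathematical subtlety is the minimality at $2$ and $3$, and verifying the exact torsion structure will be completed in the remaining results of this section rather than here, since part $(1)$ only claims the embedding $T\hookrightarrow F_T(\mathbb{Q})$.
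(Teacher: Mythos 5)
There is a genuine gap in your plan for part $(2)$. You propose to compute $c_4$, $c_6$, $\Delta$ of the model in Table \ref{ta:FTmodel} directly, verify the identities $c_4=A_T$, $c_6=B_T$, $\Delta=D_T$, and then argue that this (already integral) model is minimal at every prime. But for roughly half of the torsion subgroups the tabulated model $F_T$ is \emph{not} a global minimal model: its invariants are $u_T^{4}A_T$, $u_T^{6}B_T$ and its discriminant is $u_T^{12}D_T$, where $u_T=2$ for $T=C_2,C_6,C_{10},C_2\times C_2$, $u_T=16$ for $T=C_2\times C_6$, and $u_T$ is even a polynomial in $a,b$ for $T=C_3$ (namely $\left(a^{3}-3a^{2}b-6ab^{2}-b^{3}\right)^{2}$) and $T=C_4$ (namely $\left(a-b\right)\left(a+b\right)^{3}$). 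So your symbolic check $c_4=A_T$ would simply fail in those cases, and the subsequent step ``it suffices to show the model is minimal at every prime'' would be attempting to prove a false statement. The real content of part $(2)$ is to identify the exact rescaling $u_T$ and to prove that the rescaled model is minimal; the paper does this not by running Tate's algorithm ad hoc, but by recognizing $F_T\!\left(a,b\right)$ as a specialization $E_T\!\left(\mathfrak{A}_T,\mathfrak{B}_T\right)$ (or $E_T\!\left(\mathfrak{A}_T,\mathfrak{B}_T,\mathfrak{D}_T\right)$) of the universal torsion families of \cite{2020arXiv200101016B} and invoking the explicit minimal-discriminant classification there (Theorem 4.4), whose hypotheses are exactly what $\gcd(a,b)=1$, $a\equiv0\bmod 6$, and the auxiliary coprimality $\gcd\!\left(\mathfrak{A}_T,\mathfrak{B}_T\mathfrak{D}_T\right)=1$ guarantee. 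Your Kraus/Tate route could in principle be repaired --- compute the invariants of the tabulated model, exhibit the admissible change of variables with $u=u_T$, and then prove minimality of the result --- but as written the proposal misses that this rescaling step exists, and it is the heart of the paper's argument.

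Two smaller points. In part $(3)$, your deduction that $\gcd\!\left(A_T,B_T\right)=1$ follows from $\gcd\!\left(A_T,D_T\right)=1$ together with $B_T^{2}=A_T^{3}-1728D_T$ breaks down at $p=2,3$: since $1728=2^{6}3^{3}$, a common factor of $2$ or $3$ in $A_T$ and $B_T$ need not divide $D_T$. The paper handles these primes separately, using $a\equiv0\bmod 6$ to show $A_T\equiv1\bmod 6$; your sketch needs the analogous step. For part $(1)$, exhibiting explicit torsion points and checking their orders is a legitimate alternative to the paper's appeal to the families $E_T$ of \cite{2020arXiv200101016B}, though it is considerably more laborious for the larger groups; and, as you correctly note, only the embedding $T\hookrightarrow F_T\!\left(\mathbb{Q}\right)$ is claimed here, with the exact torsion computed later.
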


\begin{proof}
By inspection, each $F_{T}$ is given by an integral Weierstrass model. Now suppose $T=C_{1}$. Then $T\hookrightarrow F_{T}\!\left(\mathbb{Q}\right)$ and from (\ref{basicformulas}), it is verified that the discriminant of $F_{T}$ is $D_{T}$ and that the invariants $c_{4}$ and $c_{6}$ of $F_{T}$ are $A_{T}$ and $B_{T}$, respectively. This verification can be found in \cite[Verification\_of\_Proposition\_3\_1.ipynb]{GitHubGoodEC}.
Next, consider $A_{T}\!\left(  a,b\right)  $ and $B_{T}\!\left(  a,b\right)  $ as polynomials in $R=\mathbb{Z}\!\left[  a,b,r,s\right]  $. Then there are $\mu_T,\nu_T\in R$ such that the following identity holds in $R$: $\mu_T A_{T}\!\left(  a,b\right)  +\nu_T B_{T}\!\left(  a,b\right)  =3^{33}\left(sa^{47}+rb^{47}\right)  $. The expressions for $\mu_T$ and $\nu_T$ are found in \cite[definitions.sage]{GitHubGoodEC}, and the identity is verified in \cite[Verification\_of\_Proposition\_3\_1.ipynb]{GitHubGoodEC}.  As a consequence of this identity, $\gcd\!\left(  A_{T},B_{T}\right)  $ divides $3^{33}$ since $a$ and $b$ are relatively prime. Since $a$ is divisible by $3$, it follows that $A_{T}\equiv b^{12}\ \operatorname{mod}3$ and thus $\gcd\!\left(  A_{T},B_{T}\right)  =1$. This shows that $F_{T}$ is given by a global minimal model which concludes the proof for $T=C_{1}$.

For $T\neq C_1$, let $\mathfrak{A}_{T}$ and$\ \mathfrak{B}_{T}$ be the integers defined in Table~\ref{frakAB} and let $\mathfrak{D}_{T}=1$ for each $T$ except $T=C_{2}$. For $T=C_{2}$, define $\mathfrak{D}_{C_{2}}=ab\left(  a+b\right)\left(  b-a\right)  $. Since $a$ and $b$ are relatively prime with $a$ even, it is easily verified that $\gcd\!\left(  \mathfrak{A}_{T},\mathfrak{B}_{T}\mathfrak{D}_{T}\right)  =1$ if $T\neq C_2,C_3,C_4,C_2\times C_2$. For the remaining $T$'s, we proceed as in the case for $T=C_1$ and note that there are $\mu_T,\nu_T\in R$ such that the following identities holds in $R$:
\begin{equation}
\mu_{T}\mathfrak{A}_{T}+\nu_{T}\mathfrak{B}_{T} \mathfrak{D}_T =\left\{
\begin{array}
[c]{cl}%
2^{15}\left(  sa^{17}+rb^{17}\right)   & \text{if }T=C_{2},\\
3^{6}\left(  sa^{17}+rb^{17}\right)   & \text{if }T=C_{3},\\
3^{4}\left(  sa^{15}+rb^{15}\right)   & \text{if }T=C_{4},\\
2^{12}\left(  sa^{15}+rb^{15}\right)   & \text{if }T=C_{2}\times C_{2},
\end{array}
\right.  \label{EucAlg}%
\end{equation}
Expressions for $\mu_T$ and $\nu_T$ are found in \cite[definitions.sage]{GitHubGoodEC}, and the identities in \eqref{EucAlg} are verified in \cite[Verification\_of\_Proposition\_3\_1.ipynb]{GitHubGoodEC}. Since $\gcd(a,b)=1$ with $a\equiv 0 \mod 6$, it follows that $\gcd\!\left(  \mathfrak{A}_{T},\mathfrak{B}_{T}\mathfrak{D}_{T}\right)  =1$ for each $T$.
{\renewcommand*{\arraystretch}{1.2}
\begin{longtable}{C{0.7in}C{2.4in}C{2.2in}}
\hline
$T$ & $\mathfrak{A}_{T}$ & $\mathfrak{B}_{T}$\\
\hline
\endfirsthead
%\caption[]{\emph{continued}}\\
\hline
$T$ & $\mathfrak{A}_{T}$ & $\mathfrak{B}_{T}$ \\
\hline
\endhead
\hline
\multicolumn{3}{r}{\emph{continued on next page}}
\endfoot
\hline
\caption{The quantities $\mathfrak{A}_{T}$ and $\mathfrak{B}_{T}$}\label{frakAB}
\endlastfoot
	
$C_{2}$ & $-(a^{8}-24a^{7}b+20a^{6}b^{2}-24a^{5}b^{3}-26a^{4}b^{4}%
+24a^{3}b^{5}+20a^{2}b^{6}+24ab^{7}+b^{8})$ & $8(a^{2}+b^{2})(a^{2}%
-2ab-b^{2})^{2}$\\\hline
$C_{3}$ & $(a^{3}-3a^{2}b-6ab^{2}-b^{3})^{3}$ & $-ab(a+b)(a^{2}+ab+b^{2})^{3}%
$\\\hline
$C_{4}$ & $(a-b)^{2}(a+b)^{6}$ & $-ab(a^{2}-ab+b^{2})^{3}$\\\hline
$C_{5}$ & $1$ & $32768b^{20}$\\\hline
$C_{6}$ & $(a^{2}-4ab+b^{2})^{2}$ & $-(a^{2}+b^{2})^{2}$\\\hline
$C_{7},C_{9},C_{10}$ & $a$ & $a+b$\\\hline
$C_{8}$ & $a^{2}+b^{2}$ & $b^{2}$\\\hline
$C_{12}$ & $a+b$ & $a$\\\hline
$C_{2}\times C_{2}$ & $-16ab(a-b)(a+b)(a^{2}+b^{2})^{2}$ & $(a^{2}%
-2ab-b^{2})^{4}$\\\hline
$C_{2}\times C_{4}$ & $(a^{2}-2ab-b^{2})^{2}$ & $ab(a+b)(a-b)$\\\hline
$C_{2}\times C_{6}$ & $a^{2}+b^{2}$ & $a^{2}+8ab+b^{2}$\\\hline
$C_{2}\times C_{8}$ & $a-b$ & $-\frac{1}{2}a$
\label{ta:ETcomp}	
\end{longtable}}
\vspace{-0.5em}

Next, let $E_{T}$ be the elliptic curve defined in \cite[Table 3]{2020arXiv200101016B} and let $\alpha_{T}  ,\ \beta_{T}  ,$ and $\gamma_{T} $ be as given in \cite[Tables~4,~5,~6]{2020arXiv200101016B}. By \cite[Lemma~2.9]{2020arXiv200101016B}, the invariants $c_{4}$ and $c_{6}$ of $E_{T}$ are $\alpha_{T}$ and $\beta_{T}$, respectively, and the discriminant of $E_{T}$ is $\gamma_{T}$. By \cite[Verification\_of\_Proposition\_3\_1.ipynb]{GitHubGoodEC}, we have that $F_{T}\!\left(  a,b\right)  =E_{T}\!\left(  \mathfrak{A}_{T},\mathfrak{B}_{T},\mathfrak{D}_{T}\right)  $ if $T=C_{2},C_{2}\times C_{2}$ and $F_{T}\!\left(  a,b\right)  =E_{T}\!\left(  \mathfrak{A}_{T},\mathfrak{B}_{T}\right)  $ if $T\not =C_{2},C_{2}\times C_{2}$. In particular, the discriminant of $F_T$ is $\gamma_T$ and the invariants $c_4$ and $c_6$ of $F_T$ are $\alpha_T$ and $\beta_T$, respectively. Specifically, we have that
\[
\left(  \alpha_{T},\beta_{T},\gamma_{T}\right)  =\left\{
\begin{array}
[c]{cl}%
\left(  \alpha_{T}\!\left(  \mathfrak{A}_{T},\mathfrak{B}_{T}\right)
,\beta_{T}\!\left(  \mathfrak{A}_{T},\mathfrak{B}_{T}\right)  ,\gamma
_{T}\!\left(  \mathfrak{A}_{T},\mathfrak{B}_{T}\right)  \right)  & \text{if
}T\neq C_{2},C_{2}\times C_{2},\\
\left(  \alpha_{T}\!\left(  \mathfrak{A}_{T},\mathfrak{B}_{T},\mathfrak{D}%
_{T}\right)  ,\beta_{T}\!\left(  \mathfrak{A}_{T},\mathfrak{B}_{T}%
,\mathfrak{D}_{T}\right)  ,\gamma_{T}\!\left(  \mathfrak{A}_{T},\mathfrak{B}%
_{T},\mathfrak{D}_{T}\right)  \right)  & \text{if }T=C_{2},C_{2}\times C_{2}.
\end{array}
\right.
\]
By \cite[Section~2]{2020arXiv200101016B}, we conclude that $T\hookrightarrow F_{T}\!\left(\mathbb{Q}\right)  $. This establishes $(1)$.

We now claim that the minimal discriminant of $F_{T}$ is $u_{T}^{-12}\gamma_{T}$
where%
\[
u_{T}=\left\{
\begin{array}
[c]{cl}%
1 & \text{if }T=C_{5},C_{7},C_{8},C_{9},C_{12},C_{2}\times C_{4},C_{2}\times
C_{8},\\
2 & \text{if }T=C_{2},C_{6},C_{10},C_{2}\times C_{2},\\
16 & \text{if }T=C_{2}\times C_{6},\\
\left(  a^{3}-3a^{2}b-6ab^{2}-b^{3}\right)  ^{2} & \text{if }T=C_{3},\\
\left(  a-b\right)  \left(  a+b\right)  ^{3} & \text{if }T=C_{4}.
\end{array}
\right.
\]
For $T\neq C_{2}$, this is a consequence of \cite[Theorem 4.4]{2020arXiv200101016B} since the assumptions on $a$ and $b$ imply that $\mathfrak{A}_{T},\mathfrak{B}_{T},$ and $\mathfrak{D}_{T}$ satisfy the
necessary and sufficient conditions for the claimed $u_{T}$. 
For $T=C_{2}$, we note that $\mathfrak{D}_{T}$ is not necessarily squarefree. Write $\mathfrak{D}_{T}=k^{2}l$ for some squarefree integer~$l$. Then $F_{T}
\!\left(  a,b\right)  =E_{T}\!\left(  \mathfrak{A}_{T},\mathfrak{B}
_{T}k,l\right)  $ satisfies the assumptions of loc. cit. Since $v_{2}\!\left(  \mathfrak{B}_{T}\right)  \geq3$
and $\mathfrak{A}_{T}\equiv3\ \operatorname{mod}4$, we conclude that $u_{T}=2$
by loc. cit. This shows the claim and thus the invariants $c_4$ and $c_6$ associated to a global minimal model of $F_T$ are $u_{T}^{-4}\alpha_T$ and $u_{T}^{-6}\beta_{T}$, respectively. It follows that $\left(  2\right)  $ holds since $u_{T}^{-4}\alpha
_{T}=A_{T}$, $u_{T}^{-6}\beta_{T}=B_{T}$, and $u_{T}^{-12}\gamma_{T}=D_{T}$. See \cite[Verification\_of\_Proposition\_3\_1.ipynb]{GitHubGoodEC} for a verification of these equalities.

Next, suppose that $T\neq C_{5}$ and recall that $\gcd(a,b)=1$ with $a \equiv 0 \mod 6$. If $T=C_{10}$ (resp. $T=C_7$), then $a$ is divisible by $5$ (resp. $7$). Consequently, $F_{T}$ is semistable since the integers $\mathfrak{A}_{T},\mathfrak{B}_{T},$ and $\mathfrak{D}_{T}$ satisfy one of the conditions listed in \cite[Corollary 7.2]{2020arXiv200101016B}. In particular, $\gcd\!\left(  A_{T},D_{T}\right)
=1$. From the identity $A_{T}^{3}-1728D_{T}=B_{T}^{2}$, we deduce that $v_{p}\!\left(
\gcd\!\left(  A_{T}^{3},B_{T}^{2}\right)  \right) =0$ for each prime $p\geq 5$. Since $a$ is divisible by $6$, we have by inspection that $A_{T}\equiv1\ \operatorname{mod}6$ which shows that $\gcd\!\left(  A_{T}^3,B_{T}^2\right)  =1$.

For $T=C_{5}$, we have that $\mathfrak{A}_{T}+3\mathfrak{B}_{T}%
\equiv4b^{20}+1\ \operatorname{mod}5$. By Fermat's Little Theorem, we deduce
that $\mathfrak{A}_{T}+3\mathfrak{B}_{T}$ is divisible by $5$. Consequently,
$F_{T}$ has additive reduction at $5$ and is semistable at all primes $p\neq
5$ \cite[Theorem 7.1]{2020arXiv200101016B}. Thus, $\gcd\!\left(  A_{T},D_{T}\right)  $ divides a power of $5$. Next, observe that $A_{T}%
\equiv1+16b^{20}+11b^{40}+16b^{60}+b^{80}\ \operatorname{mod}25$. Since
$\varphi\!\left(  25\right)  =20$, where $\varphi$ is the Euler-totient
function, we have by Euler's Theorem that $A_{T}\equiv20\ \operatorname{mod}%
25$. In particular, $v_{5}\!\left(  A_{T}\right)  =1$. Now let $P=64b^{8}-8b^{4}-1$, $Q=4096b^{16}-1024b^{12}+256b^{8}-24b^{4}+1$, and
$R=4096b^{16}+1536b^{12}+256b^{8}+16b^{4}+1$. Then $\frac{D_T}{PQR}=2^{75}b^{100}$. Since $b=2^{n}$ for some nonnegative integer $n$, it follows that
$v_{5}\!\left(  D_{T}\right)  =v_{5}\!\left(  PQR\right)  $. Moreover, \cite[Verification\_of\_Proposition\_3\_1.ipynb]{GitHubGoodEC} verifies that
$P,Q,R\equiv5\ \operatorname{mod}25$ whenever $b$ is not divisible by $5$.
Consequently, $v_{5}\!\left(  D_{T}\right)  =3$ and thus $\gcd\!\left(
A_{T},D_{T}\right)  =5$. From the identity $A_{T}^{3}-1728D_{T}=B_{T}^{2}$, we
observe that $v_{5}\!\left(  B_{T}\right)  \geq2$ and thus $v_{5}\!\left(
\gcd\!\left(  A_{T}^{3},B_{T}^{2}\right)  \right)  =3$. Lastly, we note that $A_{T}\equiv1\ \operatorname{mod}6$ and thus $v_{p}\!\left(
\gcd\!\left(  A_{T}^{3},B_{T}^{2}\right)  \right)  =0$ for each prime $p\neq 5$. It follows that $\gcd\!\left(  A_{T}^3,B_{T}^2\right)  =125$, which concludes the proof.
\end{proof}

\begin{lemma}
\label{isogeny}The following elliptic curves are isogenous and non-isomorphic
over $%
%TCIMACRO{\U{211a} }%
%BeginExpansion
\mathbb{Q}
%EndExpansion
$:

$\left(  1\right)  $ $F_{C_{1}},\ F_{C_{3}},$ and $F_{C_{9}}$;

$\left(  2\right)  $ $F_{C_{4}},\ F_{C_{6}},\ F_{C_{12}},\ $and $F_{C_{2}%
\times C_{6}}$;

$\left(  3\right)  $ $F_{C_{2}},\ F_{C_{8}},\ F_{C_{2}\times C_{2}}%
,\ F_{C_{2}\times C_{4}},\ F_{C_{2}\times C_{8}}$, and $E_{C_{4}}\!\left(
256a^{2}b^{2}\left(  a^{2}-b^{2}\right)  ^{2},-\left(  a^{2}+b^{2}\right)
^{4}\right)  $ where $E_{C_{4}}$ is as defined in \cite[Table 3]%
{2020arXiv200101016B}.
\end{lemma}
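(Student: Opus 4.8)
The plan is to produce explicit $\mathbb{Q}$-rational isogenies linking the curves in each list, and then to separate the curves by their $j$-invariants. For the isogenies, the key input is Proposition~\ref{Propmin}$(1)$: in each list the curve carrying the largest torsion subgroup has the corresponding rational torsion point, hence $\mathbb{Q}$-rational cyclic subgroups of every order dividing the group order. In list $(1)$, $F_{C_9}$ has a rational point $P$ of order $9$, so $\langle 3P\rangle\cong C_3$ and $\langle P\rangle\cong C_9$ are $\mathbb{Q}$-rational subgroups; applying V\'{e}lu's formulas one computes the invariants $c_4,c_6$ of $F_{C_9}/\langle 3P\rangle$ and $F_{C_9}/\langle P\rangle$ as polynomials in $a,b$ and checks that, after dividing by a suitable twelfth power in $\mathbb{Q}(a,b)$, they become $A_{C_3},B_{C_3}$ and $A_{C_1},B_{C_1}$ respectively. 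By Proposition~\ref{Propmin}$(2)$ this identifies the two quotients, up to $\mathbb{Q}$-isomorphism, with $F_{C_3}$ and $F_{C_1}$, and composing these isogenies with one another and with their dual isogenies gives all pairwise isogenies among $F_{C_1},F_{C_3},F_{C_9}$. Lists $(2)$ and $(3)$ are treated the same way, starting from $F_{C_{12}}$ (a rational $12$-torsion point gives rational subgroups of orders $2,3,4,6$) and from $F_{C_2\times C_8}$ (full $2$-torsion together with a rational $8$-torsion point gives a rich supply of rational $2$- and $4$-isogenies): in each case one runs V\'{e}lu's formulas on the relevant subgroups, matches the resulting invariants against the $A_T,B_T$ of Tables~\ref{ta:AT} and~\ref{ta:BT}---and against $256a^2b^2(a^2-b^2)^2$ and $-(a^2+b^2)^4$ for the remaining curve of list $(3)$---and assembles the isogeny graph by composition. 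All the required identities are polynomial identities in $\mathbb{Z}[a,b]$, routine to verify, e.g.\ in SageMath. Alternatively, one can read these relations off from the isogeny relations recorded for the normalized models $E_T$ in \cite{2020arXiv200101016B} and transport them through the substitutions $F_T(a,b)=E_T(\mathfrak{A}_T,\mathfrak{B}_T,\mathfrak{D}_T)$ from the proof of Proposition~\ref{Propmin}.

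For non-isomorphism, one computes the $j$-invariants $j_T=A_T^3/D_T$ of the curves in each list, together with the $j$-invariant of the extra curve of list $(3)$, and verifies that in each list they are pairwise distinct as rational functions of $a$ and $b$. Since $\mathbb{Q}$-isomorphic elliptic curves have equal $j$-invariant, this already shows the listed curves are pairwise non-$\mathbb{Q}$-isomorphic for all admissible $(a,b)$ away from the proper subvariety on which two of these $j$-functions agree. On that exceptional locus the curves in question are isogenous with a common $j$-invariant, hence quadratic twists of one another, and isomorphism is excluded by a direct argument---most simply by observing that $\mathbb{Q}$-isomorphic curves have the same minimal discriminant, which by Proposition~\ref{Propmin}$(2)$ is $D_T$, and by checking that the relevant $D_T$ do not coincide there.

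The main obstacle is the V\'{e}lu step: normalizing each quotient curve by the correct twelfth power so that its invariants land exactly on the $A_T,B_T$ of Proposition~\ref{Propmin}---equivalently, tracking the minimal twist through each isogeny---and verifying that the intermediate quotients are genuinely the claimed $F_{T'}$ and not merely curves sharing their $j$-invariant. This bookkeeping is where the care lies, particularly for the larger isogeny graph of list $(3)$; the underlying polynomial algebra itself is mechanical.
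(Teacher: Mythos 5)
Your plan is workable, but it is a genuinely different route from the paper's. The paper does not run V\'{e}lu's formulas at all: it identifies each listed curve, via the explicit substitutions $(\rho_{m},\tau_{m})$ and SageMath-verified $\mathbb{Q}(a,b)$-isomorphisms, with a member of Nitaj's families $E_{n}(s,t)$ from \cite{MR1914669}, where the full isogeny classes of curves with a rational point of order $9$, $12$, or a $C_{2}\times C_{8}$ structure are already classified; both the isogeny relations and the non-isomorphism are then quoted from Nitaj. Your approach is more self-contained (no reliance on Nitaj's tables) and produces the isogenies explicitly, at the cost of heavier symbolic bookkeeping plus a separate non-isomorphism argument; the paper's route is shorter and also places each $F_{T}$ inside a classified isogeny class, which is exactly what gets combined with the isogeny-torsion graph theorem of \cite{2020arXiv200105616C} in the proof of Theorem~\ref{torsiso}. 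Note also that your ``alternative'' of reading isogeny relations off \cite{2020arXiv200101016B} is misplaced: that reference supplies the models $E_{T}$ and their minimal invariants, not the isogeny data, which in this paper comes from \cite{MR1914669}.

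Two points where your plan as written needs reinforcement. First, composing the isogenies you construct only ever reaches quotients of the top curve by subgroups of its rational torsion. For list $(1)$ and list $(3)$ this suffices: the three curves of $(1)$ form a $3$-isogeny chain, and for $(3)$ the quotients of $F_{C_{2}\times C_{8}}$ by its seven cyclic rational subgroups exhaust the other members of the class. But for list $(2)$ the kernels inside $\langle P\rangle\subset F_{C_{12}}(\mathbb{Q})$ reach only five of the seven remaining class members, and the class contains two curves with a rational point of order $6$; if $F_{C_{6}}(a,b)$ is the one not of the form $F_{C_{12}}/\langle 3P\rangle$, you must additionally quotient $F_{C_{2}\times C_{6}}$ (once identified) by its remaining rational $2$-torsion subgroups. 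Second, on the exceptional locus of your non-isomorphism argument, equal $j$-invariant does not force ``quadratic twist'' when $j\in\{0,1728\}$, and equality of minimal discriminants is only a sufficient test; the clean criterion, available from Proposition~\ref{Propmin}$(2)$, is that $F_{T}(a,b)\cong F_{T'}(a,b)$ over $\mathbb{Q}$ if and only if $\bigl(A_{T},B_{T}\bigr)(a,b)=\bigl(A_{T'},B_{T'}\bigr)(a,b)$, which reduces the exceptional cases to a finite, decisive check. With these repairs your argument is complete, though all of its substance still rests on CAS verifications, much as the paper's does.
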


\begin{proof}
Let $E_{n}=E_{n}\!\left(  s,t\right)  $ be as defined in \cite{MR1914669}. In
this work, Nitaj explicitly classified the isogeny class of an elliptic curve
having torsion subgroup $C_{9},\ C_{12},$ and $C_{2}\times C_{8}$ in terms of
the families of elliptic curves $E_{n}$. These families are related to $F_{T}$
by setting%
\[
\left(  \rho_{m},\tau_{m}\right)  =\left\{
\begin{array}
[c]{cl}%
\left(  a+b,a\right)   & \text{if }m=1,\\
\left(  b-2a,b\right)   & \text{if }m=2,\\
\left(  2a,-a-b\right)   & \text{if }m=3.
\end{array}
\right.
\]
Then $F_{C_{9}}=E_{45}\!\left(  \rho_{1},\tau_{1}\right)  $ and $F_{C_{1}%
}=E_{47}\!\left(  \rho_{1},\tau_{1}\right)  $. Verification of these equalities and the following isomorphisms can be found in \cite[Verification\_of\_Lemma\_3\_2.ipynb]{GitHubGoodEC}. It is then verified via
SageMath \cite{sagemath} that $F_{C_{3}}$ is $\mathbb{Q}\!\left(  a,b\right)  $-isomorphic to $E_{46}\!\left(  \rho_{1},\tau
_{1}\right)  $. Thus, $F_{C_{1}},\ F_{C_{3}},$ and $F_{C_{9}}$ are isogenous
and non-isomorphic by \cite{MR1914669}. The
following isomorphisms also hold over $
\mathbb{Q}
%EndExpansion
\!\left(  a,b\right)  $: $\left(  i\right)  $ $F_{C_{12}}\cong E_{52}\!\left(
\rho_{2},\tau_{2}\right)  $, $\left(  ii\right)  \ F_{C_{26}}\cong
E_{53}\!\left(  \rho_{2},\tau_{2}\right)  $, $\left(  iii\right)  $ $F_{C_{4}%
}\cong E_{54}\!\left(  \rho_{2},\tau_{2}\right)  $, $\left(  iv\right)
\ F_{C_{6}}\cong E_{55}\!\left(  \rho_{2},\tau_{2}\right)  $, $\left(
v\right)  \ F_{C_{2}\times C_8}\cong E_{37}\!\left(  \rho_{3},\tau_{3}\right)  $,
$\left(  vi\right)  $ $F_{C_{2}\times C_{4}}\cong E_{38}\!\left(  \rho
_{3},\tau_{3}\right)  $, $\left(  vii\right)  \ F_{C_{8}}\cong E_{39}\!\left(
\rho_{3},\tau_{3}\right)  $, $\left(  viii\right)  $ $F_{C_{2}\times C_{2}%
}\cong E_{41}\!\left(  \rho_{3},\tau_{3}\right)  $, 
$\left(  ix\right)  $ $F_{C_{2}}\cong E_{44}\!\left(  \rho_{3},\tau_{3}\right)
$, and 
$\left(  x\right)$ $E_{C_{4}}(  256a^{2}b^{2}(  a^{2}-b^{2})  ^{2},-(a^{2}+b^{2})  ^{4})  \cong E_{42}\!\left(  \rho_{3},\tau
_{3}\right)  $. The claim now follows by~\cite{MR1914669}.
\end{proof}

\begin{theorem}
\label{torsiso}For $T=C_{5}$, let $b=2^{n}$ for $n$ a nonnegative integer. For
$T\neq C_{5}$, let $a$ and $b$ be relatively prime integers with $a$ divisible
by $6$. Then $F_{T}\!\left(
%TCIMACRO{\U{211a} }%
%BeginExpansion
\mathbb{Q}
%EndExpansion
\right)  _{\text{tors}}\cong T$.
\end{theorem}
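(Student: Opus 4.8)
The plan is to upgrade the injection $T\hookrightarrow F_{T}(\mathbb{Q})_{\text{tors}}$ supplied by Proposition~\ref{Propmin}(1) to an isomorphism, using Mazur's Theorem~\ref{torsiontheorem} together with the isogeny structure recorded in Lemma~\ref{isogeny}. Note first that the proof of that particular containment in Proposition~\ref{Propmin} uses only that $\gcd(a,b)=1$ with $6\mid a$, or that $b=2^{n}$ when $T=C_{5}$, and not the auxiliary congruences $5\mid a$ (for $C_{10}$) or $7\mid a$ (for $C_{7}$), so the injection is available under the hypotheses of the present statement. Now, for $T\in\{C_{7},C_{10}\}$ the only group on Mazur's list admitting a subgroup isomorphic to $T$ is $T$ itself, since a group containing an element of order $7$ or $10$ must be $C_{7}$ or $C_{10}$ (the groups $C_{2}\times C_{2N}$ with $N\le 4$ have exponent $2N\le 8$); hence in these two cases the injection forces equality.

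Next I would dispose of $T=C_{5}$ by reduction modulo $3$. By the proof of Proposition~\ref{Propmin} we have $A_{C_{5}}\equiv 1\bmod 6$, so $3\nmid c_{4}$, and a short computation with $b=2^{n}$ gives $D_{C_{5}}\equiv 2\bmod 3$, so $3\nmid D_{C_{5}}$ and $F_{C_{5}}$ has good reduction at $3$. Since $3\ge 3$, the reduction map is injective on $F_{C_{5}}(\mathbb{Q})_{\text{tors}}$, whence $5\mid |F_{C_{5}}(\mathbb{Q})_{\text{tors}}|\mid |\widetilde{F}_{C_{5}}(\mathbb{F}_{3})|\le 7$ by the Hasse bound; therefore $|F_{C_{5}}(\mathbb{Q})_{\text{tors}}|=5$, and as it already contains $C_{5}$ we conclude $F_{C_{5}}(\mathbb{Q})_{\text{tors}}\cong C_{5}$.

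For the remaining twelve torsion subgroups I would exploit Lemma~\ref{isogeny}, which partitions the corresponding families into three $\mathbb{Q}$-isogeny classes. In the proof of that lemma each relevant $F_{T}$ is exhibited as $\mathbb{Q}(a,b)$-isomorphic to one of Nitaj's parametrized curves $E_{m}(\rho_{i},\tau_{i})$, where $(\rho_{i},\tau_{i})$ is $(a+b,a)$, $(b-2a,b)$, or $(2a,-a-b)$; from $\gcd(a,b)=1$ and $6\mid a$ one checks directly that each such pair is coprime ($\gcd(a+b,a)=\gcd(b,a)=1$, $\gcd(b-2a,b)=\gcd(2a,b)=1$, $\gcd(2a,-a-b)=\gcd(2a,a+b)=1$, using that $b$ is odd). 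Nitaj's classification of the isogeny classes of curves with torsion $C_{9}$, $C_{12}$, or $C_{2}\times C_{8}$ \cite{MR1914669} identifies, for coprime parameters, the torsion subgroup of every curve in each class; reading this off through the identifications of Lemma~\ref{isogeny} yields $F_{T}(\mathbb{Q})_{\text{tors}}\cong T$ in all twelve cases. Should Nitaj's tables furnish only the containment $T\hookrightarrow F_{T}(\mathbb{Q})$, one argues within each class instead: Mazur's Theorem pins down the torsion of the maximal member (namely $F_{C_{9}}$; $F_{C_{12}}$ and $F_{C_{2}\times C_{6}}$; $F_{C_{2}\times C_{8}}$), and pushing this torsion along the degree-$2$ and degree-$3$ isogenies of the class — which are injective on prime-to-$\ell$ torsion and alter $\ell$-power torsion by a bounded amount — together with the non-isomorphism statement in Lemma~\ref{isogeny}, forces the torsion of every other member down to exactly its label.

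I expect the main obstacle to be uniformity in $(a,b)$: one must verify that Nitaj's nondegeneracy hypotheses reduce to the coprimality of $(\rho_{i},\tau_{i})$ (and so hold for all admissible $a,b$), and that the containment $T\hookrightarrow F_{T}(\mathbb{Q})$ genuinely persists once the auxiliary congruences of Proposition~\ref{Propmin} are dropped, which needs a direct inspection of the torsion point produced by the parametrization $E_{T}$ of \cite[Table~3]{2020arXiv200101016B}. The delicate endpoints are $F_{C_{8}}$, which — carrying a rational point of order $8$ — cannot have good reduction at $3$, so the reduction-mod-$3$ shortcut is unavailable and the isogeny-class argument is essential, and the small-torsion curves $F_{C_{1}}$, $F_{C_{2}}$, $F_{C_{3}}$, where reduction modulo $3$ alone only bounds the torsion order by $7$ and one genuinely needs the isogeny bookkeeping (via $F_{C_{9}}$ for the chain $\{F_{C_{1}},F_{C_{3}},F_{C_{9}}\}$, and via $F_{C_{2}\times C_{8}}$ for the chain containing $F_{C_{2}}$) to conclude.
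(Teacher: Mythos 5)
Your reductions for $C_{7}$ and $C_{10}$ via Mazur's Theorem match the paper (which disposes of $C_{9}$, $C_{12}$, $C_{2}\times C_{6}$, $C_{2}\times C_{8}$ the same way), and your argument for $C_{5}$ is a correct \emph{alternative} to the paper's: the paper rules out $C_{10}$ by showing the $2$-division polynomial has no rational root (Rational Root Theorem), whereas you use good reduction at $3$ (indeed $3\nmid D_{C_{5}}(1,2^{n})$, although the residue is $1$, not $2$, mod $3$), injectivity of reduction on torsion at an odd prime of good reduction, and the Hasse bound $|\widetilde{F}_{C_{5}}(\mathbb{F}_{3})|\le 7$; that works.

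The genuine gap is in the twelve remaining cases, which is exactly where the paper needs its key external input. Lemma~\ref{isogeny} (via Nitaj \cite{MR1914669}) supplies only the isogeny-class structure and pairwise non-isomorphism; the exact torsion of the \emph{non-maximal} members is obtained in the paper from \cite[Theorem 1.3]{2020arXiv200105616C}, the classification of isogeny-torsion graphs, which asserts that for these graphs there is exactly one admissible torsion configuration. Your fallback --- pin down the maximal member by Mazur, then ``push torsion along the degree-$2$ and degree-$3$ isogenies'' --- does not force the labels: knowing that $F_{C_{9}}$ has torsion exactly $C_{9}$ and that $F_{C_{3}},F_{C_{1}}$ lie in a chain of rational $3$-isogenies with it does not by itself exclude a rational $9$-point on $F_{C_{3}}$ or a rational $3$-point on $F_{C_{1}}$; ruling out such configurations is precisely the content of the isogeny-torsion graph theorem (or would require a genuine argument via bounds on rational cyclic isogenies), not bookkeeping with kernels. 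The issue is sharpest for $F_{C_{2}}$: within its six-curve $2$-isogeny class, ``contains $C_{2}$, isogenous, non-isomorphic to the others'' is consistent with torsion $C_{2}$ or $C_{4}$, and even the graph classification alone does not identify which vertex $F_{C_{2}}$ occupies. The paper resolves this by exhibiting the sixth curve $E_{C_{4}}\!\left(256a^{2}b^{2}(a^{2}-b^{2})^{2},-(a^{2}+b^{2})^{4}\right)$ in the class, noting it carries a rational $4$-torsion point, and only then applying \cite[Theorem 1.3]{2020arXiv200105616C} to get $F_{C_{2}}(\mathbb{Q})_{\text{tors}}\cong C_{2}$; your proposal has no substitute for this step. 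Your primary route is likewise contingent on Nitaj's tables recording the torsion of every member of the class uniformly in coprime parameters, which you have not verified and which the paper evidently does not rely on.
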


\begin{proof}
By Proposition \ref{Propmin}, $T\hookrightarrow F_{T}\!\left(
%TCIMACRO{\U{211a} }%
%BeginExpansion
\mathbb{Q}
%EndExpansion
\right)  $. Consequently, Theorem \ref{torsiontheorem} implies that the
theorem holds for $T=C_{7},C_{9},C_{10},C_{12},C_{2}\times C_{6},$ and
$C_{2}\times C_{8}$. By Lemma \ref{isogeny}, the following elliptic curves are
isogenous non-isomorphic elliptic curves over $%
%TCIMACRO{\U{211a} }%
%BeginExpansion
\mathbb{Q}
%EndExpansion
$: $\left(  i\right)  \ F_{C_{1}}$, $F_{C_{3}},$ and $F_{C_{9}}$, $\left(
ii\right)  $ $F_{C_{4}},\ F_{C_{6}},\ F_{C_{12}},\ $and $F_{C_{2}\times C_{6}%
}$, and $\left(  iii\right)  $ $F_{C_{2}},\ F_{C_{8}},\ F_{C_{2}\times C_{2}%
},\ F_{C_{2}\times C_{4}},\ F_{C_{2}\times C_{8}}$. By \cite[Theorem
1.3]{2020arXiv200105616C}, there is exactly one isogeny-torsion graph for
$F_{C_{9}}$, $F_{C_{12}},$ and $F_{C_{2}\times C_{8}}$. Thus $F_{T}\!\left(
%TCIMACRO{\U{211a} }%
%BeginExpansion
\mathbb{Q}
%EndExpansion
\right)  _{\text{tors}}\cong T$ if $T=C_{1}$,$\ C_{3}$,$\ C_{4}$,$C_{6}%
,C_{8},C_{2}\times C_{2},$ and $C_{2}\times C_{4}$. Since $F_{C_{2}\times
C_{8}}$ is isogenous to $E_{C_{4}}\!\left(  256a^{2}b^{2}\left(  a^{2}%
-b^{2}\right)  ^{2},-\left(  a^{2}+b^{2}\right)  ^{4}\right)  $ which has a
$4$-torsion point \cite[Section 2]{2020arXiv200101016B}, we conclude
by~\cite[Theorem 1.3]{2020arXiv200105616C}, that $F_{C_{2}}\!\left(
%TCIMACRO{\U{211a} }%
%BeginExpansion
\mathbb{Q}
%EndExpansion
\right)  _{\text{tors}}\cong C_{2}$.

It remains to show the case when $T=C_{5}$. By assumption, $b=2^{n}$ for $n$ a
nonnegative integer. By Theorem \ref{torsiontheorem}, $F_{T}\!\left(
%TCIMACRO{\U{211a} }%
%BeginExpansion
\mathbb{Q}
%EndExpansion
\right)  _{\text{tors}}$ is either $C_{5}$ or $C_{10}$. In particular,
$F_{T}\!\left(
%TCIMACRO{\U{211a} }%
%BeginExpansion
\mathbb{Q}
%EndExpansion
\right)  _{\text{tors}}\cong C_{10}$ if and only if the $2$-division
polynomial%
\[
\psi_{2}\!\left(  x\right)  =4x^{3}+\left(  2^{30+40n}-2^{16+20n}3+1\right)
x^{2}+\left(  2^{31+40n}-2^{16+20n}\right)  x+2^{30+40n}%
\]
of $F_{T}$ has a root in $%
%TCIMACRO{\U{211a} }%
%BeginExpansion
\mathbb{Q}
%EndExpansion
$. By the Rational Root Theorem, if $\psi_{2}\!\left(  x\right)  =0$ for $x\in%
%TCIMACRO{\U{211a} }%
%BeginExpansion
\mathbb{Q}
%EndExpansion
$, then $x=\frac{1}{2},\ \frac{1}{4},$ or $2^{m}$ with $m\leq30+40n$. It
follows $\psi_{2}\!\left(  x\right)  $ has no rational solutions and thus
$F_{T}\!\left(
%TCIMACRO{\U{211a} }%
%BeginExpansion
\mathbb{Q}
%EndExpansion
\right)  _{\text{tors}}\cong T$, which concludes the proof.
\end{proof}

\section{Sequences of Good \texorpdfstring{$ABC$}{}
Triples\label{sec:ABCtriples}}

In this section, we utilize the identity $A_{T}^{3}-B_{T}^{2}=1728D_{T}$ to
construct sequences of good $ABC$ triples for $T\neq C_{1},C_{2}$. In what
follows, let $\hat{D}_{T}=\hat{D}_{T}\!\left(  a,b\right)  $ be as defined in
Table \ref{ta:DhatT}. We start by establishing some technical results. These results have been verified in Mathematica \cite{Mathematica} and SageMath \cite{sagemath}, and the files containing the verifications are found in \cite{GitHubGoodEC}. Our first result is a consequence from the fact that
$A_{T},B_{T},D_{T},$ and $\hat{D}_{T}$ are homogeneous polynomials in $a$ and
$b$ if $T\neq C_{5}$.

\begin{lemma}
\label{HomPoly}For $T\neq C_{5}$, let%
\begin{equation}
n_{T}=\left\{
\begin{array}
[c]{cl}%
24 & \text{if }T=C_{7},\\
36 & \text{if }T=C_{1},C_{3},C_{9},C_{10},\\
48 & \text{if }T=C_{2},C_{4},C_{6},C_{8},C_{12},C_{2}\times C_{2},C_{2}\times
C_{4},C_{2}\times C_{6},C_{2}\times C_{8}.
\end{array}
\right.  \label{ch:gec:defnt}%
\end{equation}
Then%
\begin{align*}
A_{T}\!\left(  a,b\right)   &  =a^{\frac{n_{T}}{3}}A_{T}\!\left(  1,\frac
{b}{a}\right)  ,\qquad B_{T}\!\left(  a,b\right)  =a^{\frac{n_{T}}{2}}%
B_{T}\!\left(  1,\frac{b}{a}\right) \\
D_{T}\!\left(  a,b\right)   &  =a^{n_{T}}D_{T}\!\left(  1,\frac{b}{a}\right)
,\qquad\hat{D}_{T}\left(  a,b\right)  =a^{\frac{n_{T}}{6}}\hat{D}_{T}\!\left(
1,\frac{b}{a}\right)  .
\end{align*}

\end{lemma}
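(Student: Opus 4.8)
The statement to prove is Lemma~\ref{HomPoly}, which asserts that the four polynomials $A_T$, $B_T$, $D_T$, and $\hat D_T$ attached to $F_T$ are homogeneous in $a,b$ of degrees $n_T/3$, $n_T/2$, $n_T$, and $n_T/6$ respectively, for each $T \neq C_5$.

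The plan is to argue directly from the definitions. A polynomial $P(a,b)$ is homogeneous of degree $d$ precisely when $P(\lambda a, \lambda b) = \lambda^d P(a,b)$ as polynomials in $\lambda, a, b$; specializing $\lambda = 1/a$ and clearing denominators then gives the displayed formulas $P(a,b) = a^d P(1, b/a)$, so the content of the lemma is exactly the homogeneity claim together with the bookkeeping that the stated $d$'s are consistent with the identity $A_T^3 - B_T^2 = 1728 D_T$ from Proposition~\ref{Propmin}.

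First I would observe that homogeneity of the invariants follows from homogeneity of the Weierstrass coefficients of $F_T$ together with the transformation behavior of $c_4, c_6, \Delta$. Concretely, inspection of Table~\ref{ta:FTmodel} shows that for each $T \neq C_5$ the Weierstrass coefficient $a_j$ of $F_T(a,b)$ is a homogeneous polynomial in $a,b$ of degree $j$ (that is, $a_1$ has degree $1$, $a_2$ degree $2$, $a_3$ degree $3$, $a_4$ degree $4$, $a_6$ degree $6$); this is the defining feature of the "homogeneous weighting" built into these model tables. Granting this, formula~(\ref{basicformulas}) shows that $c_4(F_T(a,b))$ is homogeneous of degree $4$ and $c_6(F_T(a,b))$ of degree $6$ in $a,b$, and hence $\Delta(F_T(a,b))$ is homogeneous of degree $12$. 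Now by Proposition~\ref{Propmin}(2), $A_T$, $B_T$, $D_T$ are obtained from these by dividing by $u_T^4$, $u_T^6$, $u_T^{12}$ respectively, where $u_T$ is a homogeneous polynomial in $a,b$ (of degree $0$, i.e.\ a constant, in the generic cases $u_T \in \{1,2,16\}$; of degree $3$ for $T = C_3$ since $u_{C_3} = (a^3 - 3a^2b - 6ab^2 - b^3)^2$ wait, that is degree $6$; and of degree $4$ for $T = C_4$ since $u_{C_4} = (a-b)(a+b)^3$). So $A_T = u_T^{-4} c_4(F_T)$ is homogeneous of degree $4 - 4\deg u_T$, and similarly for $B_T$ and $D_T$. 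Setting $n_T := 3\deg A_T = 12 - 12 \deg u_T$ one checks the three degrees are $n_T/3$, $n_T/2$, $n_T$ as claimed, consistently with $A_T^3 - B_T^2 = 1728 D_T$ forcing $3\deg A_T = 2 \deg B_T = \deg D_T$. A short case check confirms $n_T = 12$ when $\deg u_T = 0$ is wrong — rather one must use that $u_T$ is a constant in the first group but the degrees in the lemma are $24, 36, 48$, so in fact the relevant "$\deg u_T$" is $-1, -2, -3$; equivalently, the correct statement is that $n_T$ is read off directly from the explicit polynomials $A_T, B_T, D_T, \hat D_T$ in Tables~\ref{ta:AT}--\ref{ta:DhatT}, and the lemma is just the assertion that those tabulated polynomials are homogeneous of the indicated degrees. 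I would therefore finish by noting that each entry of Tables~\ref{ta:AT}, \ref{ta:BT}, \ref{ta:DT}, \ref{ta:DhatT} is visibly a homogeneous polynomial in $a$ and $b$: one reads off its degree directly, verifies it equals $n_T/3$, $n_T/2$, $n_T$, $n_T/6$ respectively with $n_T$ as in~(\ref{ch:gec:defnt}), and the displayed dehomogenization identities are then immediate by substituting $(\lambda a, \lambda b)$ for $(a,b)$ with $\lambda = 1/a$ and multiplying through by the appropriate power of $a$.

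The only real obstacle is purely bookkeeping: making sure the four degree assignments are mutually consistent and match the three-way split in~(\ref{ch:gec:defnt}). The cleanest route is to verify homogeneity of the tabulated polynomials one table at a time — this is a finite, mechanical check entry by entry (and is precisely what the Mathematica notebooks mentioned in the text do) — and then confirm that $\deg A_T$, $\deg B_T$, $\deg D_T$, $\deg \hat D_T$ are in the ratio $2:3:6:1$, which is forced anyway by the identities $A_T^3 - B_T^2 = 1728 D_T$ and (presumably) $\hat D_T^6 \sim D_T$ up to the relevant power, so that defining $n_T = \deg D_T$ yields exactly the three values $24, 36, 48$ according to $T$. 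Once the homogeneity is in hand, the four displayed equations are a one-line substitution, so there is nothing further to prove.
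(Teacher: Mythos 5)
Your final argument --- read the degrees of $A_T$, $B_T$, $D_T$, $\hat D_T$ directly off Tables \ref{ta:AT}--\ref{ta:DhatT}, check that each entry is homogeneous in $a,b$ of degree $n_T/3$, $n_T/2$, $n_T$, $n_T/6$ respectively, and then get the displayed identities by substituting $(1,b/a)$ and multiplying by the corresponding power of $a$ --- is correct and is precisely the paper's own justification, which records the lemma as a consequence of the homogeneity of these tabulated polynomials (a finite check done by computer algebra). You were right to abandon your first route: the Weierstrass coefficients in Table \ref{ta:FTmodel} are not homogeneous of weight $j$ (for instance $a_1$ of $F_{C_7}$ is $a^2-ab-b^2$, of degree $2$), so the invariants of those particular models need not be homogeneous and the $u_T$-bookkeeping cannot produce the values $24$, $36$, $48$ as you noticed; only the direct verification on the tables is needed.
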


\begin{proof}
See \cite[Verification\_of\_Lemma\_4\_1.ipynb]{GitHubGoodEC} for a verification of this result.
\end{proof}

\begin{lemma}
\label{lemforhT}For each $T$, let $h_{T}:%
%TCIMACRO{\U{211d} }%
%BeginExpansion
\mathbb{R}
%EndExpansion
\rightarrow%
%TCIMACRO{\U{211d} }%
%BeginExpansion
\mathbb{R}
%EndExpansion
$ be defined by $h_{T}\!\left(  x\right)  =A_{T}\!\left(  1,x\right)
^{3}-\hat{D}_{T}\!\left(  1,x\right)  ^{6}$. Next, let $\theta_{T}%
=\max\left\{  x\in%
%TCIMACRO{\U{211d} }%
%BeginExpansion
\mathbb{R}
%EndExpansion
\mid D_{T}\!\left(  1,x\right)  =0\right\}  $ so that%
\[
\theta_{T}\approx\left\{
\begin{array}
[c]{cl}%
4.41147 & \text{if }T=C_{1},C_{3},C_{9},\\
2.41421 & \text{if }T=C_{2},C_{8},C_{2}\times C_{2},C_{2}\times C_{4}%
,C_{2}\times C_{8},\\
3.73205 & \text{if }T=C_{4},C_{6},C_{12},C_{2}\times C_{6},\\
0.67062 & \text{if }T=C_{5},\\
6.2959 & \text{if }T=C_{7},\\
1.61803 & \text{if }T=C_{10}.
\end{array}
\right.
\]
Then the functions $A_{T}\!\left(  1,x\right)  ,\ D_{T}\!\left(  1,x\right)
,\ \hat{D}_{T}\!\left(  1,x\right)  $, and $h_{T}\!\left(  x\right)  $ are
positive on the interval $\left(  \theta_{T},\infty\right)  $.
\end{lemma}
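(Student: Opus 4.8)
The plan is to prove Lemma~\ref{lemforhT} by the same computer-algebra strategy that underlies Lemma~\ref{poslemma}: for each of the fifteen torsion subgroups $T$, reduce each of the four assertions ``$A_T(1,x)>0$'', ``$D_T(1,x)>0$'', ``$\hat D_T(1,x)>0$'', and ``$h_T(x)>0$'' on $(\theta_T,\infty)$ to a finite check about the real roots and leading coefficients of explicit one-variable polynomials. Concretely, I would first record the closed forms of $A_T(1,x)$, $D_T(1,x)$, $\hat D_T(1,x)$ obtained by specializing $a=1$, $b=x$ in Tables~\ref{ta:AT}, \ref{ta:DT}, \ref{ta:DhatT} (for $T=C_5$ using $a=b=2^n$ the polynomials collapse, so the $C_5$ cases are handled separately and directly). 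Each such polynomial has rational — in fact integer — coefficients, so I would compute its real roots exactly (via resultants / Sturm sequences), verify that the largest real root of $D_T(1,x)$ equals the value $\theta_T$ quoted in the statement, and check that for each of $A_T(1,x)$, $\hat D_T(1,x)$, and $D_T(1,x)$ the largest real root is $\le\theta_T$ and the leading coefficient is positive; positivity on $(\theta_T,\infty)$ then follows because a real polynomial with positive leading coefficient and no root exceeding $\theta_T$ is positive for all $x>\theta_T$.

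The only genuinely new ingredient is the function $h_T(x)=A_T(1,x)^3-\hat D_T(1,x)^6$. Here I would not expand $h_T$ as a dense polynomial (the degrees are large), but instead argue structurally: by Lemma~\ref{HomPoly} and the degree bookkeeping in \eqref{ch:gec:defnt}, $\deg A_T(1,x)=n_T/3$ and $\deg\hat D_T(1,x)=n_T/6$, so $\deg A_T(1,x)^3 = n_T = \deg\hat D_T(1,x)^6$ and the two cubed/sixth powers have the \emph{same} degree; hence $h_T$ could in principle be lower degree than $n_T$, and one must check the sign of its leading coefficient directly from the leading coefficients of $A_T(1,x)$ and $\hat D_T(1,x)$. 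Once the leading coefficient of $h_T$ is confirmed positive (or, in the degenerate case where the top terms cancel, one passes to the next nonzero coefficient), it remains to show $h_T$ has no real root exceeding $\theta_T$. Since on $(\theta_T,\infty)$ both $A_T(1,x)$ and $\hat D_T(1,x)$ are already known positive, $h_T(x)>0$ there is equivalent to $A_T(1,x)^{1/2} > \hat D_T(1,x)^{1/3}$, i.e. $A_T(1,x)^2 > \hat D_T(1,x)^3$ (raising to the sixth power is monotone on positives) — but that inequality $g_T$-type comparison is exactly the shape already handled by the resultant/Sturm machinery applied to the single polynomial $A_T(1,x)^2-\hat D_T(1,x)^3$, whose largest real root I would compute and verify is $\le\theta_T$. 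In practice I would simply feed $h_T$ itself to the root-isolation routine, but this reformulation is the conceptual reason the check succeeds.

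I would organize the write-up as a single paragraph asserting that all of this is a finite computation: ``For each $T$, the polynomials $A_T(1,x)$, $D_T(1,x)$, $\hat D_T(1,x)$, and $h_T(x)$ have positive leading coefficient, and their largest real roots are as listed (respectively bounded by $\theta_T$); the claim follows since a real polynomial with positive leading term is positive to the right of its largest real root.'' The supporting computations (Sturm sequences, or equivalently exact real-root isolation in Mathematica) are cited as available upon request, matching the paper's convention for Lemmas~\ref{poslemma} and~\ref{lemforhT}. The $C_5$ entry ($\theta_{C_5}\approx 0.668998$) is checked by hand from the explicit formulas $A_{C_5},D_{C_5},\hat D_{C_5}$ in $b=2^n$, where everything is a polynomial in the single quantity $2^n$ and positivity for $2^n\ge 1$ is immediate.

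I expect the main obstacle to be the degree blow-up in $h_T$ for the $n_T=48$ cases: $A_T(1,x)^3$ and $\hat D_T(1,x)^6$ are each degree $48$ with large integer coefficients, so a naive expansion is unwieldy and error-prone, and one must be careful about the possibility that the degree-$48$ terms cancel (forcing a descent to lower-order coefficients to pin down $\operatorname{sgn}$ of the leading coefficient of $h_T$). The reformulation $h_T(x)>0 \iff A_T(1,x)^2 > \hat D_T(1,x)^3$ on $(\theta_T,\infty)$ mitigates this, but verifying that no real root of $A_T(1,x)^2-\hat D_T(1,x)^3$ exceeds $\theta_T$ for all fifteen $T$ is where the bulk of the (machine-assisted) work lies.
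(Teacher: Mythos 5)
Your overall plan coincides with the paper's: the paper offers no written argument for Lemma~\ref{lemforhT} beyond the remark that it ``can be verified via a computer algebra system,'' and your proposal is exactly that verification spelled out (exact root isolation plus a leading-coefficient check for $A_T(1,x)$, $D_T(1,x)$, $\hat D_T(1,x)$, $h_T(x)$), so in substance you are reproducing the intended proof. Two slips are worth fixing, though neither invalidates the computation you say you would actually run. First, your ``conceptual'' reformulation of $h_T(x)>0$ is wrong as stated: on a region where $A_T(1,x)$ and $\hat D_T(1,x)$ are positive, $A_T(1,x)^3>\hat D_T(1,x)^6$ is equivalent to $A_T(1,x)>\hat D_T(1,x)^2$ (equivalently $A_T(1,x)^{1/2}>\hat D_T(1,x)$), not to $A_T(1,x)^{1/2}>\hat D_T(1,x)^{1/3}$ nor to $A_T(1,x)^2>\hat D_T(1,x)^3$; since you fall back on feeding $h_T$ itself to the root-isolation routine, the verification survives, but the stated equivalence should be corrected. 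Second, your treatment of $C_5$ falls short of the lemma as stated: the lemma asserts positivity of $A_{C_5}(1,x)$, $D_{C_5}(1,x)$, $\hat D_{C_5}(1,x)$, $h_{C_5}(x)$ for \emph{all real} $x>\theta_{C_5}\approx 0.668998$, so checking positivity only at the values $x=2^n\ge 1$ (and the specialization is $a=1$, $b=2^n$, not $a=b=2^n$) does not prove the statement; the $C_5$ polynomials in $x$ from Tables~\ref{ta:AT}, \ref{ta:DT}, \ref{ta:DhatT} must be run through the same largest-real-root check as the other cases, which is straightforward but needs to be said.
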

\begin{proof}
For each $T$, \cite[Mathematica\_Verifications.nb]{GitHubGoodEC} verifies that $(1)$ $\theta_T$ is the greatest real root of $D_T(1,x)$, $(2)$ if $A_T(1,x),\hat{D}_T(1,x),$ or $h_T(x)$ has a real root, then the greatest real root is at most $\theta_T$, and $(3)$ $A_T(1,y),$ $D_T(1,y),$ $\hat{D}_T(1,y),$  $h_T(y)>0$ for some $y>\theta_T$. It follows that each of these functions are positive on the interval $(\theta_T,\infty)$ since they are continuous.
\end{proof}

\begin{lemma}
\label{poslemma}Let $T\neq C_{1},C_{2},C_{5}$ and let $\theta_T$ be as given in Lemma~\ref{lemforhT}. Define the functions
$f_{T},g_{T}:(\theta_T,\infty)
\rightarrow%
%TCIMACRO{\U{211d} }%
%BeginExpansion
\mathbb{R}
%EndExpansion
$ by%
\[
f_{T}\!\left(  x\right)  =\frac{B_{T}\!\left(  1,x\right)  ^{2}}%
{1728D_{T}\!\left(  1,x\right)  }-x\qquad\text{and}\qquad g_{T}\!\left(
x\right)  =A_{T}\!\left(  1,x\right)  ^{2}+B_{T}\!\left(  1,x\right)  \hat
{D}_{T}\!\left(  1,x\right)  .
\]
Let $\delta_{T}=\max\left\{  x\in%
%TCIMACRO{\U{211d} }%
%BeginExpansion
\mathbb{R}
%EndExpansion
\mid f_{T}\!\left(  x\right)  =0\right\}  $ so that%
\[
\delta_{T}\approx\left\{
\begin{array}
[c]{ll}%
43.4033 & \text{if }T=C_{3},\\
13.5934 & \text{if }T=C_{4},\\
43.3677 & \text{if }T=C_{6},\\
7.07956 & \text{if }T=C_{7},\\
2.48383 & \text{if }T=C_{8},\\
4.75552 & \text{if }T=C_{9},
\end{array}
\right.  \qquad\text{and}\qquad\delta_{T}\approx\left\{
\begin{array}
[c]{ll}%
3.06311 & \text{if }T=C_{10},\\
3.89418 & \text{if }T=C_{12},\\
1728.57 & \text{if }T=C_{2}\times C_{2},\\
12.2907 & \text{if }T=C_{2}\times C_{4},\\
6.00485 & \text{if }T=C_{2}\times C_{6},\\
3.38169 & \text{if }T=C_{2}\times C_{8}.
\end{array}
\right.
\]
Then the functions $f_{T}\!\left(  x\right)  ,\ g_{T}\!\left(  x\right)
,\ -B_{T}\!\left(  1,x\right)  $ are positive on the interval $\left(
\delta_{T},\infty\right)  $.
\end{lemma}
\begin{proof}
For $T\neq C_{1},C_{2},C_{5}$, \cite[Mathematica\_Verifications.nb]{GitHubGoodEC} verifies that $(1)$ $\delta_T$ is the greatest real root of $f_T(x)$, $(2)$ if $g_T(x)$ or $B_T(1,x)$ has a real root, then the greatest real root is at most $\delta_T$, and $(3)$ $f_T(y),$ $g_T(y),$ $-B_T(1,y)>0$ for some $y>\delta_T$. It follows that each of these functions are positive on the interval $(\delta_T,\infty)$ since they are continuous.
\end{proof}

\begin{corollary}
\label{corhT}Let $a$ and $b$ be relatively prime integers such that $\frac
{b}{a}>\theta_{T}$. For $T=C_{5}$, assume further that $a=n_{T}=1$. Then
$A_{T}\!\left(  a,b\right)  ,\ D_{T}\!\left(  a,b\right)  >0$ and $\hat{D}%
_{T}\!\left(  a,b\right)  ^{6}<A_{T}\!\left(  a,b\right)  ^{3}$.
\end{corollary}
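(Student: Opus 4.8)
The plan is to reduce the corollary to the single-variable positivity statements already recorded in Lemma \ref{lemforhT}, exactly as in the proof of Corollary \ref{corfT}, handling $T = C_5$ as a separate (and easier) case.

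First I would treat $T \neq C_5$. Since the hypothesis $\frac{b}{a} > \theta_T$ makes sense we have $a \neq 0$, and Lemma \ref{lemforhT} applied at $x = \frac{b}{a}$ gives $A_T(1,\frac{b}{a}) > 0$, $D_T(1,\frac{b}{a}) > 0$, $\hat D_T(1,\frac{b}{a}) > 0$, and $h_T(\frac{b}{a}) = A_T(1,\frac{b}{a})^3 - \hat D_T(1,\frac{b}{a})^6 > 0$. I would then invoke the homogeneity relations of Lemma \ref{HomPoly}, namely $A_T(a,b) = a^{n_T/3}A_T(1,\frac{b}{a})$, $D_T(a,b) = a^{n_T}D_T(1,\frac{b}{a})$, and $\hat D_T(a,b) = a^{n_T/6}\hat D_T(1,\frac{b}{a})$. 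The one point needing attention is the sign of these powers of $a$: for every admissible value $n_T \in \{24,36,48\}$ the exponents $n_T/3 \in \{8,12,16\}$, $n_T/6 \in \{4,6,8\}$, and $n_T$ itself are all even, so $a^{n_T/3}, a^{n_T/6}, a^{n_T} > 0$ regardless of whether $a$ is positive or negative. Hence $A_T(a,b) > 0$ and $D_T(a,b) > 0$, and
\[
A_T(a,b)^3 - \hat D_T(a,b)^6 = a^{n_T}\!\left(A_T\!\left(1,\tfrac{b}{a}\right)^3 - \hat D_T\!\left(1,\tfrac{b}{a}\right)^6\right) = a^{n_T}\,h_T\!\left(\tfrac{b}{a}\right) > 0,
\]
so $\hat D_T(a,b)^6 < A_T(a,b)^3$.

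For $T = C_5$, the hypothesis stipulates $a = 1$ (and $n_T = 1$), so $\frac{b}{a} = b$ and no normalization is needed; here Lemma \ref{HomPoly} does not apply, but it is not required. I would simply apply Lemma \ref{lemforhT} directly at $x = b > \theta_{C_5}$ to obtain $A_{C_5}(1,b), D_{C_5}(1,b) > 0$ and $h_{C_5}(b) = A_{C_5}(1,b)^3 - \hat D_{C_5}(1,b)^6 > 0$, which is precisely the desired conclusion in this case.

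I do not expect a genuine obstacle: the substantive content is the polynomial positivity encoded in Lemmas \ref{HomPoly} and \ref{lemforhT}, whose verification is delegated to a computer algebra system. The only care required is the parity check on the exponents of $a$ appearing in the homogeneity relations (so that restoring the variables $(a,b)$ from the ratio $\frac{b}{a}$ preserves all signs), together with the observation that the non-homogeneous family $F_{C_5}$ is covered separately by the assumption $a = 1$.
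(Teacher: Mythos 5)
Your proposal is correct and follows essentially the same route as the paper: apply Lemma \ref{lemforhT} at $x=\frac{b}{a}$ and restore the variables via the homogeneity relations of Lemma \ref{HomPoly}, with $T=C_{5}$ covered by the convention $a=n_{T}=1$ so that no rescaling is needed. Your explicit check that the exponents $\frac{n_{T}}{3},\frac{n_{T}}{6},n_{T}$ are even (so the powers of $a$ are positive even when $a<0$) is a detail the paper leaves implicit, but it does not change the argument.
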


\begin{proof}
Since $\frac{b}{a}>\theta_{T}$, we have by Lemmas \ref{HomPoly} and
\ref{lemforhT} that $A_{T}\!\left(  a,b\right)  =a^{\frac{n_{T}}{3}}%
A_{T}\!\left(  1,\frac{b}{a}\right)  >0$, $\ D_{T}\!\left(  a,b\right)
=a^{n_{T}}D_{T}\!\left(  1,\frac{b}{a}\right)  >0$, and
\[
A_{T}\!\left(  a,b\right)  ^{3}-\hat{D}\left(  a,b\right)  ^{6}=a^{n_{T}%
}\left(  A_{T}\!\left(  1,\frac{b}{a}\right)  ^{3}-\hat{D}_{T}\!\left(
1,\frac{b}{a}\right)  \right)  >0. \qedhere
\] 
\end{proof}

\begin{corollary}
\label{corfT}Let $T\neq C_{1},C_{2},C_{5}$. Let $a$ and $b$ be relatively
prime integers such that $\frac{b}{a}>\delta_{T}$. Then%
\[
\frac{b}{a}<\frac{B_{T}\!\left(  a,b\right)  ^{2}}{1728D_{T}\!\left(
a,b\right)  }\qquad\text{and}\qquad\left\vert B_{T}\!\left(  a,b\right)
\hat{D}_{T}\!\left(  a,b\right)  \right\vert <A_{T}\!\left(  a,b\right)
^{2}.
\]

\end{corollary}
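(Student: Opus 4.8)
The plan is to deduce Corollary~\ref{corfT} directly from Lemmas~\ref{HomPoly}, \ref{poslemma}, and~\ref{lemforhT} by the standard homogenization trick. First I would observe that the hypothesis $\frac{b}{a}>\delta_{T}$ means the real number $x=\frac{b}{a}$ lies in the interval $(\delta_{T},\infty)$, so that Lemma~\ref{poslemma} applies and gives $f_{T}(x)>0$ and $g_{T}(x)>0$. Unwinding the definition of $f_{T}$, the inequality $f_{T}(x)>0$ reads
\[
\frac{B_{T}(1,x)^{2}}{1728D_{T}(1,x)}>x,
\]
and unwinding the definition of $g_{T}$, the inequality $g_{T}(x)>0$ reads $A_{T}(1,x)^{2}>-B_{T}(1,x)\hat{D}_{T}(1,x)$. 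So the content of the corollary is just the assertion that these two inequalities are preserved when we pass from $\frac{b}{a}$ back to the pair $(a,b)$.

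The preservation step is exactly what Lemma~\ref{HomPoly} provides. Writing $x=\frac{b}{a}$, the lemma gives $B_{T}(a,b)^{2}=a^{n_{T}}B_{T}(1,x)^{2}$ and $1728D_{T}(a,b)=a^{n_{T}}\cdot 1728 D_{T}(1,x)$, so the ratio $\frac{B_{T}(a,b)^{2}}{1728D_{T}(a,b)}$ equals $\frac{B_{T}(1,x)^{2}}{1728D_{T}(1,x)}$ — the factor $a^{n_{T}}$ cancels, and since $n_{T}$ is even (it is $24$, $36$, or $48$) there is no sign subtlety even if $a<0$. Thus $\frac{B_{T}(a,b)^{2}}{1728D_{T}(a,b)}=\frac{B_{T}(1,x)^{2}}{1728D_{T}(1,x)}>x=\frac{b}{a}$, which is the first claimed inequality. (One should note in passing that $D_{T}(a,b)\neq 0$: since $\frac{b}{a}>\delta_{T}\geq\theta_{T}$ in every case — a comparison one checks against the two tables — Lemma~\ref{lemforhT} gives $D_{T}(1,x)>0$, hence $D_{T}(a,b)=a^{n_{T}}D_{T}(1,x)\neq 0$, so the ratio is well defined.)

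For the second inequality, Lemma~\ref{HomPoly} gives $A_{T}(a,b)^{2}=a^{\frac{2n_{T}}{3}}A_{T}(1,x)^{2}$ and $B_{T}(a,b)\hat{D}_{T}(a,b)=a^{\frac{n_{T}}{2}+\frac{n_{T}}{6}}B_{T}(1,x)\hat{D}_{T}(1,x)=a^{\frac{2n_{T}}{3}}B_{T}(1,x)\hat{D}_{T}(1,x)$. The exponents agree (both equal $\frac{2n_{T}}{3}$, which is an integer since $3\mid n_{T}$), so taking absolute values and dividing by $|a|^{\frac{2n_{T}}{3}}>0$ reduces the claim to $|B_{T}(1,x)\hat{D}_{T}(1,x)|<A_{T}(1,x)^{2}$. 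Since $x>\delta_{T}$, Lemma~\ref{poslemma} gives $-B_{T}(1,x)>0$, i.e. $B_{T}(1,x)<0$, and Lemma~\ref{lemforhT} gives $\hat{D}_{T}(1,x)>0$ (again using $\delta_{T}\geq\theta_{T}$); hence $|B_{T}(1,x)\hat{D}_{T}(1,x)|=-B_{T}(1,x)\hat{D}_{T}(1,x)$, and $g_{T}(x)>0$ says precisely that this quantity is less than $A_{T}(1,x)^{2}$. This completes the proof. The only genuinely non-routine point is the bookkeeping that $\delta_{T}\geq\theta_{T}$ for every $T\neq C_{1},C_{2},C_{5}$, so that the positivity statements of Lemma~\ref{lemforhT} are available on $(\delta_{T},\infty)$; this is immediate by comparing the two numerical tables case by case, and I would state it as the one thing to verify rather than belabor it.
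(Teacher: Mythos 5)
Your proof is correct and follows essentially the same route as the paper: homogenize via Lemma~\ref{HomPoly} to reduce to $x=\frac{b}{a}$ and then invoke the positivity of $f_{T}$, $g_{T}$, and $-B_{T}(1,x)$ from Lemma~\ref{poslemma}. Your extra bookkeeping (evenness of the exponents, $\delta_{T}>\theta_{T}$ so that $\hat{D}_{T}(1,x)>0$, and hence $\left\vert B_{T}\hat{D}_{T}\right\vert=-B_{T}\hat{D}_{T}$) is exactly what the paper's displayed computation uses implicitly, so no changes are needed.
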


\begin{proof}
Since $\frac{b}{a}>\delta_{T}$, Lemmas \ref{HomPoly} and \ref{poslemma} imply
that%
\begin{align*}
\frac{B_{T}\!\left(  a,b\right)  ^{2}}{1728D_{T}\!\left(  a,b\right)  }%
-\frac{b}{a}  &  =\frac{B_{T}\!\left(  1,\frac{b}{a}\right)  ^{2}}%
{1728D_{T}\!\left(  1,\frac{b}{a}\right)  }-\frac{b}{a}>0\qquad\text{and}\\
A_{T}\!\left(  a,b\right)  ^{2}-\left\vert B_{T}\!\left(  a,b\right)  \hat
{D}_{T}\!\left(  a,b\right)  \right\vert  &  =a^{\frac{2n_{T}}{3}}\left(
A_{T}\!\left(  1,\frac{b}{a}\right)  ^{2}-\left\vert B_{T}\!\left(  1,\frac
{b}{a}\right)  \hat{D}_{T}\!\left(  1,\frac{b}{a}\right)  \right\vert \right)
\\
&  =a^{\frac{2n_{T}}{3}}\left(  A_{T}\!\left(  1,\frac{b}{a}\right)
^{2}+B_{T}\!\left(  1,\frac{b}{a}\right)  \hat{D}_{T}\!\left(  1,\frac{b}%
{a}\right)  \right)  >0.
\end{align*}
The result now follows.
\end{proof}

\begin{lemma}
\label{lemABC}For $T=C_{5}$, let $b=2^{n}$ for $n$ a nonnegative integer. For
$T\neq C_{5}$, let $a$ and $b$ be relatively prime integers with $a$ divisible
by $6$ such that $\frac{b}{a}>\theta_{T}$ where $\theta_{T}$ is as defined in
Lemma~\ref{lemforhT}. Suppose further that $a\equiv0\ \operatorname{mod}5$
(resp. $a\equiv0\ \operatorname{mod}7$) if $T=C_{10}$ (resp. $T=C_{7}$). Then
$\left(  1728D_{T}\!\left(  a,b\right)  ,B_{T}^{2}\!\left(  a,b\right)
,A_{T}^{3}\!\left(  a,b\right)  \right)  $ and $\left(  5^{-3}1728D_{T}%
\!\left(  1,b\right)  ,5^{-3}B_{T}^{2}\!\left(  1,b\right)  ,5^{-3}A_{T}%
^{3}\!\left(  1,b\right)  \right)  $ are positive $ABC$ triples for $T\neq
C_{5}$ and $T=C_{5}$, respectively. In particular, $\max\!\left\{  \left\vert
A_{T}^{3}\right\vert ,B_{T}^{2}\right\}  =A_{T}^{3}$.
\end{lemma}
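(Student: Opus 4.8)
The plan is to verify the claim by reducing everything to the sign and magnitude statements already packaged in Corollaries~\ref{corfT} and~\ref{corhT}, treating the cases $T\neq C_5$ and $T=C_5$ in parallel. First, from Proposition~\ref{Propmin}, under the stated hypotheses $D_T$ is the minimal discriminant of $F_T$ and $A_T,B_T$ are the invariants of a global minimal model, so the identity $A_T^3-B_T^2=1728D_T$ holds. The hypothesis $\tfrac{b}{a}>\theta_T$ (with $a=n_T=1$ when $T=C_5$, so $b=2^n$ and $\tfrac{b}{a}=b\ge\theta_{C_5}$ once $n\ge 1$, and one checks $n=0$ separately or notes $\theta_{C_5}<1$) lets us invoke Corollary~\ref{corhT} to get $A_T(a,b),D_T(a,b)>0$. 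Hence $A_T^3>0$, and since $1728D_T>0$ we get $A_T^3=B_T^2+1728D_T>B_T^2\ge 0$; in particular $B_T^2>0$ and the triple $(1728D_T,B_T^2,A_T^3)$ consists of three positive integers. Their sum condition $1728D_T+B_T^2=A_T^3$ is exactly the rearranged minimal-model identity, and coprimality follows from the $\gcd$ statement in Proposition~\ref{Propmin}(3): for $T\neq C_5$, $\gcd(A_T^3,B_T^2)=1$, so the triple is an $ABC$ triple; for $T=C_5$, $\gcd(A_T^3,B_T^2)=125$, so after dividing by $125=5^3$ we likewise get a coprime triple. This gives the first assertion in both cases.

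For the $C_5$ case one must also confirm that $5^3\mid 1728D_T$, $5^3\mid B_T^2$, and $5^3\mid A_T^3$ so that $(5^{-3}1728D_T,5^{-3}B_T^2,5^{-3}A_T^3)$ is a triple of integers: this is immediate from the divisibility computations in the proof of Proposition~\ref{Propmin}, namely $v_5(A_T)=1$ (so $v_5(A_T^3)=3$), $v_5(D_T)=3$, and $v_5(B_T)\ge 2$ combined with $v_5(\gcd(A_T^3,B_T^2))=3$ forcing $v_5(B_T)=2$ hence $v_5(B_T^2)\ge 3$ wait one needs exactly $=3$ for the divided triple to be primitive, but $v_5(\gcd(A_T^3,B_T^2))=3$ together with $v_5(A_T^3)=3$ gives $v_5(B_T^2)\ge 3$, and then $5\mid\gcd(5^{-3}A_T^3,5^{-3}B_T^2)$ would contradict $v_5(A_T^3)=3$, so in fact the divided triple has no factor of $5$ in common; combined with the $p=2,3$ analysis ($A_T\equiv 1\bmod 6$) and the $3^{33}$-type resultant identity this yields coprimality. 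Positivity of the divided triple is inherited from positivity of $D_T,B_T^2,A_T^3$.

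Finally, the "in particular" clause $\max\{|A_T^3|,B_T^2\}=A_T^3$ is immediate once we know $A_T^3>B_T^2\ge 0$: indeed $|A_T^3|=A_T^3$ since $A_T>0$, and $A_T^3=B_T^2+1728D_T>B_T^2$ since $D_T>0$. The only genuine case distinction is whether $C_5$ requires $n\ge 1$ versus $n=0$; since $\theta_{C_5}\approx0.669<1=2^0$, the inequality $b/a=2^n>\theta_{C_5}$ holds for every $n\ge 0$, so Corollary~\ref{corhT} applies uniformly. I expect the main (though still routine) obstacle to be bookkeeping the exact $5$-adic valuations in the $C_5$ case to confirm the divided triple is primitive and that all three of its entries are honest integers; everything else is a direct citation of Proposition~\ref{Propmin} and Corollaries~\ref{corfT}–\ref{corhT}. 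Note Corollary~\ref{corfT} itself is not strictly needed for this particular lemma, but the positivity of $-B_T(1,x)$ recorded in Lemma~\ref{poslemma} is consistent with $B_T^2>0$ and will be used downstream.
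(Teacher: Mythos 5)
Your proposal is correct and takes essentially the same route as the paper: the identity $1728D_T+B_T^2=A_T^3$ and the $\gcd$ computation are quoted from Proposition~\ref{Propmin}, positivity is obtained from Corollary~\ref{corhT} (which indeed applies for $C_5$ since $2^n\geq 1>\theta_{C_5}$), and your explicit $5$-adic bookkeeping merely unpacks what the proof of Proposition~\ref{Propmin} already records. One small patch: $B_T^2>0$ does not follow from $A_T^3>B_T^2\geq 0$ alone (a point the paper also glosses), but it is immediate since $B_T=0$ would force $1728D_T=A_T^3=\gcd\!\left(A_T^3,B_T^2\right)\in\{1,125\}$, which is impossible.
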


\begin{proof}
From Proposition \ref{Propmin}, we have the identity $1728D_{T}+B_{T}^{2}%
=A^3_T\ $and%
\[
\gcd\!\left(  A_{T}^{3},B_{T}^{2}\right)  =\left\{
\begin{array}
[c]{cl}%
1 & \text{if }T\neq C_{5},\\
125 & \text{if }T=C_{5}.
\end{array}
\right.
\]
Consequently, $\left(  1728D_{T},B_{T}^{2},A_{T}^{3}\right)  $ and $\left(
5^{-3}1728D_{T},5^{-3}B_{T}^{2},5^{-3}A_{T}^{3}\right)  $ are $ABC$ triples
for $T\neq C_{5}$ and $T=C_{5}$, respectively. Moreover, these are positive
$ABC$ triples by Corollary \ref{corhT}.
\end{proof}

\begin{lemma}
\label{lemmadisc}Let $T\neq C_{5}$. If $\left(  a,b,a+b\right)  $ is a good
positive $ABC$ triple with $a$ even, then $\operatorname{rad}\!\left(
D_{T}\!\left(  a,b\right)  \right)  <\left\vert \hat{D}_{T}\!\left(
a,b\right)  \right\vert $.
\end{lemma}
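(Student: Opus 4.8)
The plan is to reduce the claim to a divisibility fact about the homogeneous factorizations of $D_T$ and $\hat D_T$. Recall that $D_T(a,b)$ is, by Proposition~\ref{Propmin}, the minimal discriminant of the semistable curve $F_T(a,b)$, and that $\hat D_T$ is defined in Table~\ref{ta:DhatT}. What I would first want to exhibit (this is purely a polynomial-identity check, done once for each $T$ and verifiable in a CAS) is that $D_T(a,b)$ is, up to sign, a product of the form $D_T(a,b)=\pm\, u\, v^2$ where $u$ is (up to units at $2$) the ``radical-contributing'' part and $\hat D_T(a,b)$ collects exactly the prime powers appearing to exponent $\ge 2$ in $D_T$, i.e.\ $v\mid \hat D_T$. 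More precisely I expect the correct bookkeeping to be: every prime $p$ dividing $D_T(a,b)$ either divides $\hat D_T(a,b)$, or divides $D_T(a,b)$ to the first power; and the factor by which $D_T/\mathrm{rad}(D_T)$ exceeds $1$ is controlled by $\hat D_T$. This structural statement is where Lemma~\ref{HomPoly} enters — the homogeneity lets me work with the single-variable polynomials $D_T(1,x)$ and $\hat D_T(1,x)$ and check the factorization pattern over $\mathbb{Z}[x]$.

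Granting that, the argument runs as follows. Let $(a,b,a+b)$ be a good positive $ABC$ triple with $a$ even. Since $\gcd(a,b)=1$, the primes dividing $D_T(a,b)$ split into two kinds: those dividing the ``linear'' factors of $D_T(1,x)$ evaluated at $b/a$ (the factors like $a$, $b$, $a\pm b$, and the small irreducible forms appearing in Table~\ref{ta:DT}), and those dividing the factors that appear squared. For each prime $p$ of the second kind, $p\mid \hat D_T(a,b)$ by the polynomial identity above, and moreover $v_p(D_T(a,b))\ge 2$ while $v_p(\hat D_T(a,b))\ge 1$; for primes of the first kind, $v_p(D_T(a,b))=1$ so they contribute equally to $\mathrm{rad}(D_T)$ and to $D_T$ itself. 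The crucial quantitative input is that $\mathrm{rad}(abc)<\max\{a,b,c\}=c=a+b$ (goodness of the triple), together with the fact that the primes dividing $\mathrm{rad}(D_T(a,b))$ are (after removing the squared part) contained in the primes dividing $a\cdot b\cdot(a+b)$ times a bounded contribution. Combining these, $\mathrm{rad}(D_T(a,b))$ divides a fixed multiple of $\mathrm{rad}(ab(a+b))\cdot(\text{the squarefree kernel of }\hat D_T)$, and the goodness hypothesis then forces $\mathrm{rad}(D_T(a,b))<|\hat D_T(a,b)|$ after comparing degrees in $a,b$.

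Concretely I would organize the final inequality by writing $D_T(a,b)=\pm\,N\,M^2$ with $N$ squarefree-up-to-$2$ and $M$ a product of higher-power factors, arrange that $M\mid \hat D_T(a,b)$ and that $N$ divides a bounded multiple of $ab(a+b)$, then estimate
\[
\mathrm{rad}(D_T(a,b))\ \le\ 2\cdot \mathrm{rad}(N)\cdot \mathrm{rad}(M)\ \le\ 2\,\mathrm{rad}(ab(a+b))\cdot |\hat D_T(a,b)|^{1/?}\,,
\]
and use $\mathrm{rad}(ab(a+b))\le \mathrm{rad}(abc)<a+b$ to absorb the $ab(a+b)$ term, leaving $\mathrm{rad}(D_T(a,b))<|\hat D_T(a,b)|$ once the exponent of $a$ on the two sides (governed by $n_T$ in Lemma~\ref{HomPoly}) is checked to favor the right-hand side. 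The evenness of $a$ is what lets me control the prime $2$, whose behavior in $D_T$ is slightly irregular (the $u_T\in\{2,16\}$ in Proposition~\ref{Propmin} already flagged this), so $a$ even guarantees $2\mid \mathrm{rad}(ab(a+b))$ and the extra factor of $2$ I introduced above is harmless.

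The main obstacle I anticipate is pinning down the precise factorization $D_T(a,b)=\pm N M^2$ with $M\mid\hat D_T$ uniformly across all fourteen relevant $T$: the factor structure of $D_T(1,x)$ varies (some have repeated quadratic factors, the $C_3$ and $C_4$ cases carry the extra $(a^3-3a^2b-6ab^2-b^3)^2$ and $(a-b)(a+b)^3$ discrepancies seen in $u_T$), and matching each squared factor to its image inside $\hat D_T$ is the delicate bookkeeping step. Once that table of factorizations is in hand — which is exactly the sort of one-time CAS verification the paper already relies on — the radical estimate and the appeal to goodness of $(a,b,a+b)$ are routine.
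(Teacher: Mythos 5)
Your overall strategy---split off the $ab(a+b)$ part of $D_{T}$, apply the goodness hypothesis $\operatorname{rad}\!\left(ab(a+b)\right)<a+b$, and compare what remains with $\hat{D}_{T}$---is the right one, and it is how the paper argues. But the structural claims you rest it on are false, and they are exactly the step the proof hinges on. The decomposition $D_{T}=\pm N M^{2}$ with $M\mid\hat{D}_{T}$ and $N$ dividing a bounded multiple of $ab(a+b)$ does not exist for most $T$: for $T=C_{7}$ one has $D_{T}=-a^{7}b^{7}(a+b)^{7}g$ with $g=a^{3}+8a^{2}b+5ab^{2}-b^{3}$, so the ``squared part'' is $a^{3}b^{3}(a+b)^{3}$, which does not divide $\hat{D}_{C_{7}}=-(a+b)g$, while the remaining part $ab(a+b)g$ is not contained in any bounded multiple of $ab(a+b)$. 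Likewise your bookkeeping claim that every prime dividing $D_{T}$ either divides $\hat{D}_{T}$ or divides $D_{T}$ to the first power fails: a prime $p\mid a$ has $v_{p}\!\left(D_{C_{7}}\right)\geq 7$ but cannot divide $\hat{D}_{C_{7}}$ (it would force $p\mid b$). The correct relationship, visible in Tables \ref{ta:DT} and \ref{ta:DhatT}, is that $\hat{D}_{T}$ is, up to sign (and up to the factor $\tfrac{1}{8}$ when $T=C_{10}$), the product of $(a+b)$ with each distinct irreducible factor of $D_{T}$ other than $a$ and $b$, each taken once. With that in hand the inequality closes exactly, with no fudge constant, no undetermined exponent ``$|\hat{D}_{T}|^{1/?}$'', and no comparison of degrees (which could not work anyway, since a radical is not controlled by degree): writing $d$ for the product of those other factors, $\operatorname{rad}\!\left(D_{T}(a,b)\right)\leq\operatorname{rad}\!\left(ab(a+b)\right)\cdot|d|<(a+b)\,|d|=\bigl\vert\hat{D}_{T}(a,b)\bigr\vert$.

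You also misplace the role of the hypothesis that $a$ is even. It is not a global $2$-adic normalization tied to the $u_{T}$ of Proposition \ref{Propmin}; it is needed only for $T=C_{10}$, where $D_{C_{10}}$ carries the factor $\tfrac{1}{4096}$ and $\hat{D}_{C_{10}}$ the factor $\tfrac{1}{8}$. There one checks that $P=(a+2b)(a^{2}+6ab+4b^{2})(-a^{2}-ab+b^{2})$ is divisible by $8$ when $a$ is even, so $Q=P/8\in\mathbb{Z}$ and $\operatorname{rad}\!\left(D_{C_{10}}\right)\leq\operatorname{rad}\!\left(ab(a+b)Q\right)<(a+b)\operatorname{rad}\!\left(Q\right)\leq(a+b)\,|P|/8=\bigl\vert\hat{D}_{C_{10}}\bigr\vert$; for every other $T$ evenness is not used (as the paper notes before Lemma \ref{Lemmadatabase}). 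As written, your proposal defers the decisive step to a CAS verification of a factorization pattern that does not hold, so the argument has a genuine gap.
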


\begin{proof}
Since $\left(  a,b,a+b\right)  $ is a good positive $ABC$ triple,
$\operatorname{rad}\!\left(  ab\left(  a+b\right)  \right)  <a+b$. In
particular, for any positive integers $d$ and $k$, it is the case that
$\operatorname{rad}\!\left(  ab\left(  a+b\right)  d^{k}\right)  <\left(
a+b\right)  d$. Moreover, if $2^{k}$ divides $a$, then $\operatorname{rad}%
\!\left(  \frac{a}{2^{k}}\right)  \leq\operatorname{rad}\!\left(  a\right)  $.
Using these two statements it is easy to verify by inspection that the claim
holds for all $T$ with the possible exception of $T=C_{10}$. For $T=C_{10}$,
observe that $P=\left(  a+2b\right)  \left(  a^{2}+6ab+4b^{2}\right)  \left(
-a^{2}-ab+b^{2}\right)  $ is divisible by $8$ since $a$ is even. Thus $P=8Q$
for some integer $Q$ and%
\[
\operatorname{rad}\!\left(  D_{T}\!\left(  a,b\right)  \right)  \leq
\operatorname{rad}\!\left(  8ab\left(  a+b\right)  Q\right)
=\operatorname{rad}\!\left(  ab\left(  a+b\right)  Q\right)  <\left(
a+b\right)  \operatorname{rad}\!\left(  Q\right)  .
\]
The claim now follows since $\left(  a+b\right)  \operatorname{rad}\!\left(
Q\right)  \leq\left(  a+b\right)  \left\vert \frac{P}{8}\right\vert
=\left\vert \hat{D}_{T}\!\left(  a,b\right)  \right\vert $.
\end{proof}

\begin{theorem}
\label{excepseq}Let $T\neq C_{1},C_{2},C_{5}$. Let $P_{0}^{T}=\left(
a_{0},b_{0},c_{0}\right)  $ be a good positive $ABC$ triple with $a_{0}%
\equiv0\ \operatorname{mod}6$ and $\frac{b_{0}}{a_{0}}>\delta_{T}$ where
$\delta_{T}$ is as defined in Lemma \ref{poslemma}. Define $P_{n}^{T}$ for
$n\geq1$ recursively by
\[
P_{n}^{T}=\left(  a_{n},b_{n},c_{n}\right)  =\left(  1728D_{T}\!\left(
a_{n-1},b_{n-1}\right)  ,B_{T}\!\left(  a_{n-1},b_{n-1}\right)  ^{2}%
,A_{T}\!\left(  a_{n-1},b_{n-1}\right)  ^{3}\right)  .
\]
Assume further that $a_{0}\equiv0\ \operatorname{mod}5$ (resp. $a_{0}%
\equiv0\ \operatorname{mod}7$) whenever $T=C_{10}$ (resp. $T=C_{7}$). Then the
following hold for each $n\geq0$.

$\left(  1\right)  $ $\frac{b_{n}}{a_{n}}>\delta_{T}$;

$\left(  2\right)  \ a_{n}\equiv0\ \operatorname{mod}6$ for $n\geq0$;

$\left(  3\right)  $ $a_{n}\equiv0\ \operatorname{mod}5$ (resp. $a_{n}%
\equiv0\ \operatorname{mod}7$) whenever $T=C_{10}$ (resp. $T=C_{7}$);

$\left(  4\right)  $ $P_{n}^{T}=\left(  a_{n},b_{n},c_{n}\right)  $ is a good
positive $ABC$ triple.
\end{theorem}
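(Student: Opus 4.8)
The plan is to proceed by induction on $n$, proving all four statements simultaneously. The base case $n=0$ is immediate: (1) is the hypothesis $\frac{b_0}{a_0}>\delta_T$, (2) and (3) are the hypotheses on $a_0$, and (4) is the hypothesis that $P_0^T$ is a good positive $ABC$ triple. For the inductive step, assume (1)--(4) hold for some $n\geq 0$; I will deduce them for $n+1$ using the recursion $P_{n+1}^T=\bigl(1728D_T(a_n,b_n),\,B_T(a_n,b_n)^2,\,A_T(a_n,b_n)^3\bigr)$.

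First I would establish the arithmetic congruences (2) and (3) for $n+1$, since these feed into the subsequent steps. By the inductive hypothesis $a_n\equiv 0\ \mathrm{mod}\ 6$ (and $\mathrm{mod}\ 5$ or $7$ as appropriate), and by the homogeneity from Lemma~\ref{HomPoly} together with the explicit shape of $D_T$, one reads off that $1728 D_T(a_n,b_n)$ is divisible by $6$ (and by $5$ or $7$ in the relevant cases, using the extra factor appearing in $D_{C_{10}}$ or $D_{C_7}$ visible in the Lemma~\ref{lemmadisc} computation); this is a routine divisibility inspection of the tables. Next, for (1), I note that by Proposition~\ref{Propmin} and Lemma~\ref{lemABC} we have the identity $a_{n+1}+b_{n+1}=1728 D_T(a_n,b_n)+B_T(a_n,b_n)^2 = A_T(a_n,b_n)^3 = c_{n+1}$, all positive by Lemma~\ref{lemABC} (whose hypotheses are met since $\frac{b_n}{a_n}>\delta_T\geq\theta_T$). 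Then Corollary~\ref{corfT} gives exactly $\frac{b_n}{a_n}<\frac{B_T(a_n,b_n)^2}{1728 D_T(a_n,b_n)} = \frac{b_{n+1}}{a_{n+1}}$, and since $\frac{b_n}{a_n}>\delta_T$ we conclude $\frac{b_{n+1}}{a_{n+1}}>\delta_T$ too; in fact the ratio is strictly increasing.

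The main work is (4): showing $P_{n+1}^T$ is a good positive $ABC$ triple. Positivity and the $ABC$ property (coprimality with $a_{n+1}+b_{n+1}=c_{n+1}$) come from Lemma~\ref{lemABC}. For goodness I must show $\operatorname{rad}(a_{n+1}b_{n+1}c_{n+1})<c_{n+1}=A_T(a_n,b_n)^3$, equivalently $\operatorname{rad}\bigl(1728\,D_T(a_n,b_n)\,B_T(a_n,b_n)^2\,A_T(a_n,b_n)^3\bigr)<A_T(a_n,b_n)^3$. Since the radical ignores exponents and $1728=2^6 3^3$, the left side equals $\operatorname{rad}\bigl(D_T(a_n,b_n)\,A_T(a_n,b_n)\,B_T(a_n,b_n)\bigr)$, and because $F_T(a_n,b_n)$ is semistable with $\gcd(A_T,B_T)=1$ (Proposition~\ref{Propmin}), this factors as a product of radicals of pairwise-coprime-ish pieces bounded by $\operatorname{rad}(A_T)\operatorname{rad}(B_T)\operatorname{rad}(D_T)$. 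Now I bring in the inductive hypothesis that $P_n^T$ is a good positive $ABC$ triple with $a_n$ even: Lemma~\ref{lemmadisc} gives $\operatorname{rad}(D_T(a_n,b_n))<|\hat D_T(a_n,b_n)|$, so $\operatorname{rad}(A_T B_T D_T) \leq \operatorname{rad}(A_T)\,\operatorname{rad}(B_T)\,|\hat D_T| < |A_T|\,|B_T|\,|\hat D_T|$. It then suffices to show $|A_T(a_n,b_n)|\,|B_T(a_n,b_n)|\,|\hat D_T(a_n,b_n)| < A_T(a_n,b_n)^3$, i.e. $|B_T(a_n,b_n)\,\hat D_T(a_n,b_n)| < A_T(a_n,b_n)^2$ (using $A_T>0$ from Corollary~\ref{corhT}) --- and this is precisely the second inequality in Corollary~\ref{corfT}, valid because $\frac{b_n}{a_n}>\delta_T$.

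I expect the main obstacle to be bookkeeping rather than conceptual: making sure that the radical of the product $D_T A_T B_T$ is genuinely controlled by $\operatorname{rad}(A_T)\operatorname{rad}(B_T)\operatorname{rad}(D_T)$ with no hidden common factors spoiling the bound (handled by semistability and the $\gcd$ computation in Proposition~\ref{Propmin}), and verifying the divisibility claims (2),(3) case by case from the tables for $D_T$. The engine of the argument is the pair of inequalities in Corollary~\ref{corfT}: the first propagates the ratio condition (1) and hence keeps us in the region where everything is positive, while the second, combined with Lemma~\ref{lemmadisc}, is exactly what upgrades "$P_n^T$ good" to "$P_{n+1}^T$ good." Thus the induction closes.
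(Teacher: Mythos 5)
Your proposal is correct and follows essentially the same route as the paper: Corollary \ref{corfT} propagates the ratio condition, the factor of $a$ in $D_{T}$ (together with $1728$) gives the congruences, Lemma \ref{lemABC} gives the positive $ABC$ triple, and Lemma \ref{lemmadisc} combined with the second inequality of Corollary \ref{corfT} upgrades goodness from $P_{n}^{T}$ to $P_{n+1}^{T}$. The only cosmetic difference is that you run one simultaneous induction where the paper proves $(1)$--$(3)$ first and then inducts for $(4)$, and your appeal to semistability/$\gcd$ facts for bounding $\operatorname{rad}(D_{T}A_{T}B_{T})$ is unnecessary since $\operatorname{rad}(xyz)\leq\operatorname{rad}(x)\operatorname{rad}(y)\operatorname{rad}(z)$ holds unconditionally.
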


\begin{proof}
By Corollary \ref{corfT}, we have that $\frac{b_{0}}{a_{0}}<\frac{b_{1}}%
{a_{1}}$. By induction on $n$, we deduce that $\left\{  \frac{b_{n}}{a_{n}%
}\right\}  $ is an increasing sequence of rational numbers. This shows
$\left(  1\right)  $. By definition of $D_{T}$, we see that $a_{0}$ divides
$a_{n}$ for each $n\geq0$. Thus $\left(  2\right)  $ and $\left(  3\right)  $
hold. By Lemma \ref{lemABC}, we conclude that $P_{n}^{T}$ is a positive $ABC$
triple since $\delta_{T}>\theta_{T}$.

We now show that $P_{n}^{T}$ is a good $ABC$ triple by induction on $n$. Since
the base case is given, we may assume $P_{n}^{T}=\left(  a_{n},b_{n}%
,c_{n}\right)  $ is a good positive $ABC$ triple for some $n$. By Lemma
\ref{lemmadisc},%
\begin{align*}
\operatorname{rad}\!\left(  a_{n+1}b_{n+1}c_{n+1}\right)   &
=\operatorname{rad}\!\left(  D_{T}\!\left(  a_{n},b_{n}\right)  B_{T}\!\left(
a_{n},b_{n}\right)  A_{T}\!\left(  a_{n},b_{n}\right)  \right) \\
&  <\left\vert \hat{D}_{T}\!\left(  a_{n},b_{n}\right)  B_{T}\!\left(
a_{n},b_{n}\right)  \right\vert A_{T}\!\left(  a_{n},b_{n}\right)  .
\end{align*}
Next, observe that
\[
c_{n+1}-\left\vert \hat{D}_{T}\!\left(  a_{n},b_{n}\right)  B_{T}\!\left(
a_{n},b_{n}\right)  \right\vert A_{T}\!\left(  a_{n},b_{n}\right)
=A_{T}\!\left(  a_{n},b_{n}\right)  \left(  A_{T}\!\left(  a_{n},b_{n}\right)
^{2}-\left\vert \hat{D}_{T}\!\left(  a_{n},b_{n}\right)  B_{T}\!\left(
a_{n},b_{n}\right)  \right\vert \right)
\]
is positive by Corollary \ref{corfT}. Consequently, $\operatorname{rad}%
\!\left(  a_{n+1}b_{n+1}c_{n+1}\right)  <c_{n+1}$ and thus $P_{n+1}^{T}$ is a
good positive $ABC$ triple, which completes the proof.
\end{proof}

Next, we show that there are good $ABC$ triples which satisfy the assumptions
of Theorem~\ref{excepseq}. To this end, let $P_{0}^{T}=\left(  a_{0}%
,b_{0},c_{0}\right)  $ be the good $ABC$ triple given below:%
\begin{equation}
{\renewcommand*{\arraystretch}{1.35}%
\begin{tabular}
[c]{cccccc}\hline
$T$ & $a_{0}$ & $b_{0}$ & $c_{0}$ & $\frac{b_{0}}{a_{0}}$ & $q\!\left(
P_{0}^{T}\right)  $\\\hline
$C_{8},C_{9},C_{12},C_{2}\times C_{6},C_{2}\times C_{8}$ & $2^{6}3$ & $47^{2}$
& $7^{4}$ & $\approx11.5$ & $\approx1.0258$\\\hline
$C_{3},C_{4},C_{6},C_{2}\times C_{4}$ & $2\cdot3^{4}$ & $5\cdot7^{4}$ &
$23^{3}$ & $\approx74.1$ & $\approx1.1089$\\\hline
$C_{7}$ & $2^{6}3^{3}7$ & $5\cdot31^{3}$ & $11^{5}$ & $\approx12.3$ &
$\approx1.0725$\\\hline
$C_{10}$ & $2^{6}3^{2}5$ & $13\cdot23^{3}$ & $11^{5}$ & $\approx54.9$ &
$\approx1.0426$\\\hline
$C_{2}\times C_{2}$ & $2^{4}3^{2}$ & $5^{6}53$ & $13^{4}29$ & $\approx5750.9$
& $\approx1.0243$\\\hline
\end{tabular}
} \label{TableP0T}%
\end{equation}
Therefore, Theorem \ref{excepseq} implies that the corresponding sequence
$P_{n}^{T}=\left(  a_{n},b_{n},c_{n}\right)  $ is a sequence of good positive
$ABC$ triples for each $n$. Below we list an approximation of the quality of
the good $ABC$ triple $  P_{1}^{T}  $. We do not include
$a_{1}$ or $b_{1}$ due to their size. For instance, $a_{1}$ and $b_{1}$ of
$P_{1}^{C_{3}}$ have $143$ and $145$ digits, respectively. Consequently,
computing the quality of $P_{n}^{T}$ for $n\geq2$ is not computationally
feasible. We note that the calculations for the approximate values found in \eqref{TableP0T} and \eqref{Pqualities} are found in \cite[Qualities\_and\_MSRs.ipynb]{GitHubGoodEC}.
\begin{equation}
{\renewcommand*{\arraystretch}{1.2}%
\begin{tabular}
[c]{ccccccc}\hline
$T$ & $C_{3}$ & $C_{4}$ & $C_{6}$ & $C_{7}$ & $C_{8}$ & $C_{9}$\\\hline
$q\!\left(  P_{1}^{T}\right)  \approx$ & $1.00279$ & $1.00209$ & $1.00209$ &
$1.00381$ & $1.00057$ & $1.00119$\\\hline
&&&&&&\\\hline
$T$ & $C_{10}$ & $C_{12}$ & $C_{2}\times C_{2}$ & $C_{2}\times C_{4}$ &
$C_{2}\times C_{6}$ & $C_{2}\times C_{8}$\\\hline
$q\!\left(  P_{1}^{T}\right) \approx $ & $1.00803$ & $1.00084$ & $1.00049$ &
$1.00205$ & $1.00084$ & $1.00057$\\\hline
\end{tabular}
} \label{Pqualities}
\end{equation}

\begin{lemma}
\label{CondC5}Let $T=C_{5}$ and set $b=2^{n}$ for $n$ a nonnegative integer.
Then $5\operatorname{rad}\!\left(  D_{T}\right)  \leq\left\vert \hat{D}%
_{T}\right\vert $.
\end{lemma}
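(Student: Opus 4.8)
The plan is to compute $\operatorname{rad}\!\left(D_{C_{5}}\right)$ directly from the factored form of $D_{C_{5}}$ in Table \ref{ta:DT}, collapse all powers of $2$ and of $b$ occurring there into a single factor of $2$ — legitimate precisely because $b = 2^{n-1}$ is a power of $2$ — and then extract the extra factor of $5$ from the divisibility $5^{3}\mid D_{C_{5}}$ already established in Proposition \ref{Propmin}.

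Concretely, I would first record from Table \ref{ta:DT} (equivalently, from the proof of Proposition \ref{Propmin}) that, with $a = 1$,
\[
D_{C_{5}}\!\left(1,b\right) = \pm\,2^{e}\,b^{f}\,PQR, \qquad e \geq 1,
\]
where $P = 64b^{8}-8b^{4}-1$, $Q = 4096b^{16}-1024b^{12}+256b^{8}-24b^{4}+1$, and $R = 4096b^{16}+1536b^{12}+256b^{8}+16b^{4}+1$ are exactly the polynomials appearing in that proof (in fact $e = 75$, $f = 100$, and $PQR = t^{2}-11t-1$ after the substitution $t = 2^{15}b^{20}$). Each of $P,Q,R$ is odd, hence $PQR$ is odd, and since $b$ is a power of $2$ the integer $2^{e}b^{f}$ is a power of $2$; therefore
\[
\operatorname{rad}\!\left(D_{C_{5}}\right) = \operatorname{rad}\!\left(2^{e}b^{f}\right)\operatorname{rad}\!\left(PQR\right) = 2\operatorname{rad}\!\left(PQR\right).
\]
Next, Proposition \ref{Propmin} gives $v_{5}\!\left(D_{C_{5}}\right) = 3$, so $v_{5}\!\left(PQR\right) = 3$ (as $5\nmid 2^{e}b^{f}$), and $PQR > 0$ for $b \geq 1$ (immediate from the displayed factorization). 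Writing $PQR = 5^{3}m$ with $5\nmid m$ and $m \geq 1$, we get $\operatorname{rad}\!\left(PQR\right) = 5\operatorname{rad}\!\left(m\right) \leq 5m = PQR/25$, and hence
\[
5\operatorname{rad}\!\left(D_{C_{5}}\right) = 10\operatorname{rad}\!\left(PQR\right) \leq \tfrac{10}{25}\,PQR = \tfrac{2}{5}\,PQR.
\]
Reading the entry $\hat{D}_{C_{5}}$ off Table \ref{ta:DhatT}, namely $\hat{D}_{C_{5}}\!\left(1,b\right) = \tfrac{2}{5}PQR$, then yields $5\operatorname{rad}\!\left(D_{C_{5}}\right) \leq \bigl|\hat{D}_{C_{5}}\bigr|$.

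The only step above that is more than bookkeeping is matching the polynomial written in Table \ref{ta:DT} with the compact form $\pm 2^{e}b^{f}(t^{2}-11t-1)$, i.e. checking $PQR = s^{10}-11s^{5}-1$ after the substitution $s = 8b^{4}$; this reduces to the divisibility $s^{2}-s-1 \mid s^{10}-11s^{5}-1$, which follows from $\alpha^{10}-11\alpha^{5}-1 = 0$ for any $\alpha$ with $\alpha^{2} = \alpha+1$. One must also confirm that the entry for $\hat{D}_{C_{5}}$ in Table \ref{ta:DhatT} is indeed $\tfrac{2}{5}PQR$; after that the argument is pure arithmetic with radicals. I would additionally flag the tacit restriction $n \geq 1$, so that $b = 2^{n-1}$ really is a nonnegative power of $2$ and Proposition \ref{Propmin} applies; note also that the inequality is strict unless $PQR/5^{3}$ is squarefree.
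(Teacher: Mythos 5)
Your proof is correct and follows essentially the same route as the paper: you collapse $\operatorname{rad}(2^{75}b^{100})$ to $2$ because $b$ is a power of $2$, extract the factor $5^{3}$ from $PQR$ (the paper re-derives this via Fermat's Little Theorem applied to each of $P,Q,R$, while you cite the valuation $v_{5}(D_{T})=3$ already computed in the proof of Proposition \ref{Propmin} --- the same underlying fact), and then bound $\operatorname{rad}(PQR)\leq PQR/25$ to land on $\tfrac{2}{5}PQR=|\hat{D}_{T}|$. Your side remarks (the compact form $s^{10}-11s^{5}-1$, which is not actually needed, and the observation that the statement tacitly requires $n\geq1$ so that $b=2^{n-1}$ is an integer) are accurate but do not change the substance.
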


\begin{proof}
Observe that $D_T$ has a factor of $2^{75}b^{100}$.
By assumption, $\operatorname{rad}\!\left(  2^{75}b^{100}\right)  =2$ and thus
$\operatorname{rad}\!\left(  D_{T}\right)  =\operatorname{rad}\!\left(
5\hat{D}_{T}\right)  $. Next, observe that each factor of $\frac{5}{2}\hat
{D}_{T}$ is divisible by $5$ by Fermat's Little Theorem. Consequently, $25$
divides $\hat{D}_{T}$. In particular, $\operatorname{rad}\!\left(  5\hat
{D}_{T}\right)  =\operatorname{rad}\!\left(  \frac{\hat{D}_{T}}{5}\right)  $
and hence $5\operatorname{rad}\!\left(  D_{T}\right)  \leq\left\vert \hat
{D}_{T}\right\vert $.
\end{proof}

\begin{corollary}
Let $T=C_{5}$. Then
\[
P_{n}^{T}=\left(  a_{n},b_{n},c_{n}\right)  =\left(  5^{-3}1728D_{T}\!\left(
1,2^{n}\right)  ,5^{-3}B_{T}^{2}\!\left(  1,2^{n}\right)  ,5^{-3}A_{T}%
^{3}\!\left(  1,2^{n}\right)  \right)
\]
is a good positive $ABC$ triple for each $n\geq0$.
\end{corollary}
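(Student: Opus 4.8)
The plan is to run the argument of the proof of Theorem~\ref{excepseq}, keeping track of the extra powers of $5$ that appear because the $C_{5}$ triple is $\left(1728D_{T},B_{T}^{2},A_{T}^{3}\right)$ divided by $5^{3}$; unlike the triples of Theorem~\ref{excepseq} this is not produced by a recursion, so each $n$ is handled uniformly by the substitution $b=2^{n-1}$. For $n\geq1$ this $b$ is a nonnegative power of $2$, so Lemmas~\ref{lemABC} and~\ref{CondC5} and Corollary~\ref{corhT} (with $a=1$) all apply. By Lemma~\ref{lemABC}, $P_{n}^{C_{5}}=\left(a_{n},b_{n},c_{n}\right)$ is a positive $ABC$ triple with $\max\!\left\{a_{n},b_{n},c_{n}\right\}=c_{n}=5^{-3}A_{T}^{3}$, so it remains to prove $\operatorname{rad}\!\left(a_{n}b_{n}c_{n}\right)<c_{n}$. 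Being an $ABC$ triple, $a_{n},b_{n},c_{n}$ are pairwise coprime, whence $\operatorname{rad}\!\left(a_{n}b_{n}c_{n}\right)=\operatorname{rad}\!\left(a_{n}\right)\operatorname{rad}\!\left(b_{n}\right)\operatorname{rad}\!\left(c_{n}\right)$.

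Next I would estimate the three radicals using the valuations $v_{5}(A_{T})=1$, $v_{5}(B_{T})\geq2$, $v_{5}(D_{T})=3$ from Proposition~\ref{Propmin}. Writing $A_{T}=5A'$ with $5\nmid A'$ gives $c_{n}=A'^{3}$, hence $\operatorname{rad}(c_{n})=\operatorname{rad}(A')\leq A'=\tfrac15A_{T}$ (here $A_{T}>0$ by Corollary~\ref{corhT}). Writing $B_{T}=5^{k}B'$ with $k\geq2$ and $5\nmid B'$ gives $b_{n}=5^{2k-3}B'^{2}$ with $2k-3\geq1$, hence $\operatorname{rad}(b_{n})=5\operatorname{rad}(B')\leq5|B'|=5^{1-k}|B_{T}|\leq\tfrac15|B_{T}|$. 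Finally $a_{n}=2^{6}\cdot3^{3}\cdot(D_{T}/5^{3})$ with $5\nmid D_{T}/5^{3}$; since $D_{T}$ is even (indeed $2$-divisible to high order, $b$ being a power of $2$) the $2^{6}$ contributes only a single factor $2$ to $\operatorname{rad}(a_{n})$, and a short check then gives $\operatorname{rad}(a_{n})\leq\tfrac35\operatorname{rad}(D_{T})$, so by Lemma~\ref{CondC5} ($5\operatorname{rad}(D_{T})\leq|\hat{D}_{T}|$) we get $\operatorname{rad}(a_{n})\leq\tfrac{3}{25}|\hat{D}_{T}|$. Multiplying the three bounds,
\[
\operatorname{rad}\!\left(a_{n}b_{n}c_{n}\right)\;\leq\;\frac{3}{625}\left\vert \hat{D}_{T}\,B_{T}\right\vert A_{T}.
\]

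To conclude I would invoke two "soft" inequalities: $|\hat{D}_{T}|<A_{T}^{1/2}$, which is equivalent to $\hat{D}_{T}^{6}<A_{T}^{3}$ and hence part of Corollary~\ref{corhT}; and $|B_{T}|<A_{T}^{3/2}$, which is equivalent to $B_{T}^{2}<A_{T}^{3}$ and follows from the identity $A_{T}^{3}-B_{T}^{2}=1728D_{T}$ with $D_{T}>0$ (Corollary~\ref{corhT}). Together these give $\left\vert\hat{D}_{T}\,B_{T}\right\vert A_{T}<A_{T}^{1/2}\cdot A_{T}^{3/2}\cdot A_{T}=A_{T}^{3}$, whence
\[
\operatorname{rad}\!\left(a_{n}b_{n}c_{n}\right)\;<\;\frac{3}{625}A_{T}^{3}\;<\;\frac{1}{125}A_{T}^{3}\;=\;c_{n},
\]
so $P_{n}^{C_{5}}$ is good.

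I expect the one delicate point to be the bound on $\operatorname{rad}(a_{n})$. Applying Lemma~\ref{CondC5} carelessly only yields $\operatorname{rad}(a_{n})\leq6\operatorname{rad}(D_{T}/5^{3})=\tfrac65\operatorname{rad}(D_{T})\leq\tfrac{6}{25}|\hat{D}_{T}|$, which propagates to the constant $\tfrac{6}{625}>\tfrac{1}{125}$ and is \emph{not} enough; one genuinely needs the observation that the factor $2^{6}$ coming from $1728$ is swallowed by the $2$-adic valuation of $D_{T}$, which is where the hypothesis $b=2^{n-1}$ enters. By contrast, no $C_{5}$-analogue of Corollary~\ref{corfT} is needed here: once the powers of $5$ are accounted for, the required slack $\tfrac{3}{625}<\tfrac{1}{125}$ is automatic.
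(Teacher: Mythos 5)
Your proposal is correct, and it finishes the argument by a genuinely different (and sharper) accounting than the paper. The paper also begins with Lemma~\ref{lemABC} and Lemma~\ref{CondC5}, but it only extracts the factor of $5$ hidden in $\operatorname{rad}\!\left(D_{T}\right)$, arriving at $\operatorname{rad}\!\left(a_{n}b_{n}c_{n}\right)<\tfrac{3}{5}\left\vert \hat{D}_{T}B_{T}\right\vert A_{T}$, and then closes with a separate computer-algebra verification that $A_{T}\!\left(1,x\right)^{2}-\tfrac{3}{5}\left\vert \hat{D}_{T}\!\left(1,x\right)B_{T}\!\left(1,x\right)\right\vert$ is positive on $[1,\infty)$; its displayed sufficiency condition (\ref{radC5}) thus compares the radical bound with $A_{T}^{3}$ rather than with the actual maximum $c_{n}=5^{-3}A_{T}^{3}$ of the triple. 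Your finer $5$-adic bookkeeping --- using $v_{5}\!\left(A_{T}\right)=1$, $v_{5}\!\left(B_{T}\right)\geq2$, $v_{5}\!\left(D_{T}\right)=3$ from (the proof of) Proposition~\ref{Propmin}, together with pairwise coprimality and the absorption of $2^{6}$ into $v_{2}\!\left(D_{T}\right)\geq75$ --- replaces $\left\vert B_{T}\right\vert$ and $A_{T}$ by $\left\vert B_{T}\right\vert/5$ and $A_{T}/5$ in the radical estimate, giving the constant $\tfrac{3}{625}$, after which only the soft inequalities $\hat{D}_{T}^{2}<A_{T}$ (Corollary~\ref{corhT}, applicable since $a=1$ and $b=2^{n-1}\geq1>\theta_{C_{5}}$) and $B_{T}^{2}<A_{T}^{3}$ (from $D_{T}>0$) are needed to beat $c_{n}=\tfrac{5}{625}A_{T}^{3}$. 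What each approach buys: the paper's route is shorter on paper but leans on an additional CAS-verified inequality, while yours needs no new computation; more importantly, your constant actually clears the correct threshold $5^{-3}A_{T}^{3}$, whereas the coarser bound $\tfrac{3}{5}\left\vert \hat{D}_{T}B_{T}\right\vert A_{T}$ already exceeds $5^{-3}A_{T}^{3}$ at $n=1$, so the extra powers of $5$ you strip from $\operatorname{rad}\!\left(b_{n}\right)$ and $\operatorname{rad}\!\left(c_{n}\right)$ are precisely what is required to meet the true goodness condition, not just a convenience. Your closing remarks (the necessity of absorbing the $2^{6}$, and the dispensability of a $C_{5}$-analogue of Corollary~\ref{corfT}) are accurate.
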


\begin{proof}
By Lemma \ref{lemABC}, $\left(  a_{n},b_{n},c_{n}\right)  $ is a positive
$ABC$ triple for each $n$. By the proof of Proposition~\ref{Propmin},
$v_{5}\!\left(  5^{-3}B_{T}^{2}\right)  \geq1$. This, alongside Lemma
\ref{CondC5}, imply that%
\begin{align*}
\operatorname{rad}\!\left(  a_{n}b_{n}c_{n}\right)   &  =\operatorname{rad}%
\!\left(  3D_{T}\!\left(  1,2^{n}\right)  B_{T}\!\left(  1,2^{n}\right)
A_{T}\!\left(  1,2^{n}\right)  \right) \\
&  <\frac{3}{5}\left\vert \hat{D}\!\left(  1,2^{n}\right)  B_{T}\!\left(
1,2^{n}\right)  \right\vert A_{T}\!\left(  1,2^{n}\right)  .
\end{align*}
It therefore suffices to show that%
\begin{equation}
A_{T}\!\left(  1,2^{n}\right)  ^{3}-\frac{3}{5}\left\vert \hat{D}\!\left(
1,2^{n}\right)  B_{T}\!\left(  1,2^{n}\right)  \right\vert A_{T}\!\left(
1,2^{n}\right)  >0. \label{radC5}%
\end{equation}
By \cite[Mathematica\_Verifications.nb]{GitHubGoodEC}, we have that the function
$A_{T}\!\left(  1,x\right)  ^{2}-\frac{3}{5}\left\vert \hat{D}\!\left(
1,x\right)  B_{T}\!\left(  1,x\right)  \right\vert$
is positive on the interval $[1,\infty)$ which shows that (\ref{radC5}) holds
for each $n\geq0$.
\end{proof}

Below, we list the value $q\!\left(  P_{n}^{C_{5}}\right)  $ (rounded to five decimal places)
for $n\leq4$. 

\begin{equation}
{\renewcommand*{\arraystretch}{1.2}%
\begin{tabular}
[c]{cccccc}\hline
$n$ & $0$ & $1$ & $2$ & $3$ & $4$\\\hline
$q\!\left(  P_{n}^{C_{5}}\right)  $ & $1.01204$ & $1.00501$ & $1.00316$ &
$1.00493$ & $1.00182$\\\hline
\end{tabular}
}
\end{equation}

\section{Infinitely Many Good Elliptic Curves\label{InfManyGood}}

In this section, we show that for each $T$ allowed by Theorem
\ref{torsiontheorem}, there are infinitely many good elliptic curves $E$ with
$E\!\left(
%TCIMACRO{\U{211a} }%
%BeginExpansion
\mathbb{Q}
%EndExpansion
\right)  _{\text{tors}}\cong T$. Recall that an elliptic curve $E$ is good if
$N_{E}^{6}<\max\!\left\{  \left\vert c_{4}^{3}\right\vert ,c_{6}^{2}\right\}
$. Now let $N_{T}=N_{F_{T}\!\left(  a,b\right)  }$ denote the conductor of
$F_{T}\!\left(  a,b\right)  $. Under the assumptions of Proposition
\ref{Propmin}, showing that $F_{T}$ is good is equivalent to showing that the
inequality $N_{T}^{6}<\max\!\left\{  \left\vert A_{T}^{3}\right\vert
,B_{T}^{2}\right\}  $ holds since $A_{T}$ and $B_{T}$ are the invariants
$c_{4}$ and $c_{6}$ associated to a global minimal model of $F_{T}$, respectively.

Next, let $P_{0}^{T}$ be as given in (\ref{TableP0T}) for $T\neq C_{1}%
,C_{2},C_{5}$. Then Theorem \ref{excepseq} implies that the $ABC$ triple
$P_{n}^{T}=\left(  a_{n},b_{n},c_{n}\right)  $ is good for $n\geq0$. Moreover,
$\frac{b_{n}}{a_{n}}>\delta_{T}>\theta_{T}$. For $T=C_{1},C_{2}$, let
$P_{n}^{T}=P_{n}^{C_{8}}$. With these sequences of good $ABC$ triples, the
theorem below leads to the construction of infinitely many good elliptic
curves with a specified torsion subgroup.

\begin{theorem}
\label{goodtorsionmainthm}For $T\neq C_{5}$, suppose $\left\{  P_{n}%
^{T}\right\}  _{n\geq0}=\left\{  \left(  a_{n},b_{n},c_{n}\right)  \right\}
_{n\geq0}$ is a sequence of good positive $ABC$ triples such that $a_{n}%
\equiv0\ \operatorname{mod}6$ and $\frac{b_{n}}{a_{n}}>\theta_{T}$ where
$\theta_{T}$ is as defined in Lemma \ref{lemforhT}. Suppose further that
$a_{n}\equiv0\ \operatorname{mod}5$ (resp. $a_{n}\equiv0\ \operatorname{mod}%
7$) whenever $T=C_{10}$ (resp. $T=C_{7}$).

For $n\geq0$, let $H_{T}\!\left(  n\right)  $ be the elliptic curve%
\[
H_{T}\!\left(  n\right)  =\left\{
\begin{array}
[c]{cl}%
F_{T}\!\left(  a_{n},b_{n}\right)  & \text{if }T\neq C_{5},\\
F_{T}\!\left(  1,2^{n}\right)  & \text{if }T=C_{5}.
\end{array}
\right.
\]
Then, for each $n$, $H_{T}\!\left(  n\right)  $ is a good elliptic curve with
$H_{T}\!\left(  n\right)  \!\left(
%TCIMACRO{\U{211a} }%
%BeginExpansion
\mathbb{Q}
%EndExpansion
\right)  _{\text{tors}}\cong T$.
\end{theorem}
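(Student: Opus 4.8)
The plan is to combine the three ingredients that have already been assembled: the torsion identification of $F_T$ (Theorem~\ref{torsiso}), the description of the minimal model invariants of $F_T$ (Proposition~\ref{Propmin}), and the good-$ABC$-triple inequalities packaged in Corollary~\ref{corhT}. The strategy is to show that the hypotheses on the sequence $\{P_n^T\}$ funnel exactly into the hypotheses needed to invoke these three results at each index $n$, and that the good-$ABC$ inequality for $P_n^T$ translates, via the conductor bound $N_{H_T(n)} \le \operatorname{rad}(D_T)$ for the semistable primes plus the bounded additive contribution at $5$ when $T = C_5$, into the good-elliptic-curve inequality $N_{H_T(n)}^6 < \max\{|c_4^3|, c_6^2\}$.

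First I would dispose of the torsion statement. For $T \ne C_5$, each $a_n$ is divisible by $6$ (hence by the extra primes $5$ or $7$ when $T = C_{10}$ or $C_7$) and $\gcd(a_n, b_n) = 1$ since $(a_n, b_n, c_n)$ is an $ABC$ triple; these are exactly the hypotheses of Theorem~\ref{torsiso}, so $H_T(n)(\mathbb{Q})_{\mathrm{tors}} \cong T$. For $T = C_5$ we have $H_{C_5}(n) = F_{C_5}(1, 2^{n-1})$ with $b = 2^{n-1}$ a power of $2$, which again is precisely the hypothesis of Theorem~\ref{torsiso}. Second, for the good-curve inequality I would argue as follows. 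By Proposition~\ref{Propmin}(2), $A_T$ and $B_T$ are the $c_4$ and $c_6$ invariants of a global minimal model of $H_T(n)$ and $D_T$ is its minimal discriminant; by Lemma~\ref{lemABC} the sign conventions give $\max\{|A_T^3|, B_T^2\} = A_T^3$. Since $\frac{b_n}{a_n} > \theta_T$, Corollary~\ref{corhT} yields $\hat D_T(a_n, b_n)^6 < A_T(a_n, b_n)^3$ (and the $C_5$ case is covered by the same corollary with $a = n_T = 1$). It remains to bound $N_{H_T(n)}$ by $|\hat D_T(a_n, b_n)|$: by Proposition~\ref{Propmin}(3), $H_T(n)$ is semistable for $T \ne C_5$, so $N_{H_T(n)} = \operatorname{rad}(D_T)$, and Lemma~\ref{lemmadisc} (applicable since $(a_n, b_n, c_n)$ is a good positive $ABC$ triple with $a_n$ even) gives $\operatorname{rad}(D_T) < |\hat D_T|$. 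Chaining these, $N_{H_T(n)}^6 < |\hat D_T|^6 < A_T^3 = \max\{|A_T^3|, B_T^2\}$, so $H_T(n)$ is good. For $T = C_5$, Proposition~\ref{Propmin}(3) says $H_T(n)$ is semistable away from $5$ with $f_5 \le 2$, so $N_{H_T(n)} \le 25 \operatorname{rad}(D_T / 5^3) $; combining with Corollary~\ref{CondC5}, which gives $5\operatorname{rad}(D_T) \le |\hat D_T|$, one still gets $N_{H_T(n)}^6 < A_T^3$.

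I expect the main obstacle, and the place where the most care is needed, to be the $T = C_5$ conductor bookkeeping: one must track the additive reduction at $5$, confirm that $f_5 = 2$ (so the conductor picks up only a factor $25$ there rather than more), and verify that Corollary~\ref{CondC5}'s factor of $5$ is exactly what is needed to absorb this against the $\hat D_{C_5}^6 < A_{C_5}^3$ bound — in other words that the numerics $25^6$ versus $5^6$ line up correctly after the $5^{-3}$ normalizations. For $T \ne C_5$ the argument is a clean composition of already-proven inequalities, so the write-up should be short there; I would present the $T = C_5$ case separately and spell out the conductor estimate. A minor additional point to check is that distinct $n$ give non-isomorphic curves (needed for "infinitely many"), which follows because $\frac{b_n}{a_n}$ is strictly increasing for $T \ne C_5$ and $b = 2^{n-1}$ is strictly increasing for $T = C_5$, forcing the minimal discriminants $D_T$ to grow without bound.
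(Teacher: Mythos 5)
Your proposal is correct and follows essentially the same route as the paper: torsion via Theorem~\ref{torsiso}, invariants and (semi)stability via Proposition~\ref{Propmin}, then the chain $N^{6}\leq\hat{D}_{T}^{6}<A_{T}^{3}=\max\{|c_{4}^{3}|,c_{6}^{2}\}$ using Lemma~\ref{lemABC}, Corollary~\ref{corhT}, and Lemma~\ref{lemmadisc} (resp.\ Lemma~\ref{CondC5} for $T=C_{5}$). Your $C_{5}$ conductor estimate $N\leq25\operatorname{rad}(D_{T}/5^{3})$ is exactly the paper's $N_{T}=5\operatorname{rad}(D_{T})$, since $f_{5}=2$ and $v_{5}(D_{T})=3$ by the proof of Proposition~\ref{Propmin}, so the numerics line up as you anticipated.
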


\begin{proof}
For each $T$, Theorem \ref{torsiso} implies that $H_{T}\!\left(  n\right)
\!\left(
%TCIMACRO{\U{211a} }%
%BeginExpansion
\mathbb{Q}
%EndExpansion
\right)  _{\text{tors}}\cong T$. Now suppose $T=C_{5}$. Then Proposition
\ref{Propmin} implies that the minimal discriminant of $H_{T}\!\left(
n\right)  $ is $D_{T}\!\left(  1,2^{n}\right)  $ for each $n\geq0$.
Moreover, the invariants $c_{4}$ and $c_{6}$ associated to a global minimal
model of $H_{T}\!\left(  n\right)  $ are $A_{T}\!\left(  1,2^{n}\right)  $
and $B_{T}\!\left(  1,2^{n}\right)  $, respectively. By Lemma \ref{lemABC},
$A_{T}\!\left(  1,2^{n}\right)  $ and $D_{T}\!\left(  1,2^{n}\right)  $
are positive and thus $\max\!\left\{  \left\vert c_{4}^{3}\right\vert
,c_{6}^{2}\right\}  =A_{T}\!\left(  1,2^{n}\right)  ^{3}$. By Proposition
\ref{Propmin}, $F_{T}$ has additive reduction at $5$ and is semistable at each
prime $p\neq5$. Consequently, $N_{T}=5\operatorname{rad}\!\left(
D_{T}\right)  $ since $D_{T}$ is the minimal discriminant of $E_{T}$. From
Lemma \ref{CondC5} we deduce that $N_{T}^{6}\leq\hat{D}_{T}^{6}$. By Corollary
\ref{corhT}, we conclude that $H_{T}\!\left(  n\right)  $ is a good elliptic
curve for each $n$ since%
\[
N_{T}\!\left(  1,2^{n}\right)  ^{6}\leq\hat{D}_{T}\!\left(  1,2^{n}%
\right)  ^{6}<A_{T}\!\left(  1,2^{n}\right)  ^{3}.
\]

Now suppose $T\neq C_{5}$. For each $n\geq0$, we have that $H_{T}\!\left(
n\right)  $ is semistable by Proposition~\ref{Propmin} and Theorem~\ref{excepseq}. Moreover, the minimal
discriminant of $H_{T}\!\left(  n\right)  $ is $D_{T}\!\left(  a_{n}%
,b_{n}\right)  $ and the invariants $c_{4}$ and $c_{6}$ associated to a global
minimal model of $H_{T}\!\left(  n\right)  $ are $A_{T}\!\left(  a_{n}%
,b_{n}\right)  $ and $B_{T}\!\left(  a_{n},b_{n}\right)  $, respectively. Thus
$N_{T}\!\left(  a_{n},b_{n}\right)  =\operatorname{rad}\!\left(
D_{T}\!\left(  a_{n},b_{n}\right)  \right)  $. By Lemma \ref{lemABC},
$\max\!\left\{  \left\vert A_{T}\!\left(  a_{n},b_{n}\right)  \right\vert
^{3},B_{T}^{2}\!\left(  a_{n},b_{n}\right)  \right\}  =A_{T}\!\left(
a_{n},b_{n}\right)  ^{3}$. Therefore, Corollary \ref{corhT} and Lemma
\ref{lemmadisc} imply that%
\[
N_{T}\!\left(  a_{n},b_{n}\right)  ^{6}<\hat{D}_{T}\!\left(  a_{n}%
,b_{n}\right)  ^{6}<A_{T}\!\left(  a_{n},b_{n}\right)  ^{3}.
\]
This shows that $H_{T}\!\left(  n\right)  $ is a good elliptic curve for each
$n$.
\end{proof}

For $T\neq C_{5}$, let $P_{n}^{T}=\left(  a_{n},b_{n},c_{n}\right)  $ be the
sequence of good $ABC$ triples defined in the discussion prior to Theorem
\ref{goodtorsionmainthm}. Applying Theorem \ref{goodtorsionmainthm} to these
sequences yields an infinite sequence of good elliptic curves $H_{T}\!\left(
n\right)  $. Below, we list the modified Szpiro ratio of $H_{T}\!\left(
0\right)  $ and note that we omit $\sigma_{m}\!\left(  H_{T}\!\left(
n\right)  \right)  $ for $n\geq1$ due to the difficulty in computing $N_{H_{T}\!\left(
n\right)  }$. We note that the calculations for the approximate values of $\sigma_{m}\!\left(  H_{T}\!\left(
0\right)  \right)  $ are found in \cite[Qualities\_and\_MSRs.ipynb]{GitHubGoodEC}.
\[
{\renewcommand*{\arraystretch}{1.2}%
\begin{tabular}
[c]{ccccccccc}\hline
$T$ & $C_{1}$ & $C_{2}$ & $C_{3}$ & $C_{4}$ & $C_{5}$ & $C_{6}$ & $C_{7}$ &
$C_{8}$\\\hline
$\sigma_{m}\!\left(  H_{T}\!\left( 0\right)  \right)  \approx$ & $6.3020$ &
$6.2060$ & $6.1016$ & $6.0759$ & $6.2766$ & $6.0759$ & $6.1398$ &
$6.0207$\\\hline
&  &  &  &  &  &  &  & \\\hline
$T$ & $C_{9}$ & $C_{10}$ & $C_{12}$ & $C_{2}\times C_{2}$ & $C_{2}\times
C_{4}$ & $C_{2}\times C_{6}$ & $C_{2}\times C_{8}$ & \\\hline
$\sigma_{m}\!\left(  H_{T}\!\left(  0\right)  \right) \approx $ & $6.04327$ &
$6.0412$ & $6.0304$ & $6.0179$ & $6.0747$ & $6.0304$ & $6.0207$ &
\\\hline
\end{tabular}
}%
\]

\begin{lemma}
\label{LemSS}Let $E$ be a good semistable elliptic curve with minimal
discriminant $\Delta\equiv0\ \operatorname{mod}6$. Let $c_{4}$ and $c_{6}$ be
the invariants associated to a global minimal model of $E$. Then, the $ABC$
triple $\left(  1728\Delta_{E},-c_{4}^{3},c_{6}^{2}\right)  $ is good.
\end{lemma}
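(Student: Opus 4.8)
The plan is to exploit the defining identity $1728\Delta_{E} = c_{4}^{3} - c_{6}^{2}$, which we may rewrite as $1728\Delta_{E} + (-c_{4}^{3}) + c_{6}^{2} = 0$, so that $\left(1728\Delta_{E}, -c_{4}^{3}, c_{6}^{2}\right)$ is automatically a triple summing to zero; rescaling signs, $\left(1728\Delta_{E}, -c_{4}^{3}, c_{6}^{2}\right)$ is an $ABC$ triple in the generalized sense $a+b=c$ provided we check the gcd condition. First I would verify $\gcd\!\left(1728\Delta_{E}, c_{4}^{3}, c_{6}^{2}\right) = 1$: since $E$ is semistable, no prime divides both $\Delta_{E}$ and $c_{4}$, hence no prime $p \geq 5$ divides more than one of the three entries; for $p = 2, 3$ one uses that $\gcd(\Delta_{E}, c_{4}) = 1$ together with $1728 = 2^{6} 3^{3}$ and the hypothesis $6 \mid \Delta_{E}$ to control the $2$- and $3$-adic valuations — this is exactly the point where the assumption $\Delta_{E} \equiv 0 \bmod 6$ is used (and is why the subsequent remark shows it is necessary).

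Next I would establish the goodness inequality $\operatorname{rad}\!\left(1728\Delta_{E} \cdot c_{4}^{3} \cdot c_{6}^{2}\right) < \max\!\left\{1728|\Delta_{E}|,\ |c_{4}|^{3},\ c_{6}^{2}\right\}$. The key observation is that $\operatorname{rad}\!\left(1728\Delta_{E} c_{4}^{3} c_{6}^{2}\right) = \operatorname{rad}\!\left(6 \Delta_{E} c_{4} c_{6}\right)$, and since $E$ is semistable with $6 \mid \Delta_{E}$ we have $N_{E} = \operatorname{rad}\!\left(\Delta_{E}\right)$ and $6 \mid N_{E}$; moreover every prime dividing $c_{4}$ or $c_{6}$ divides $\Delta_{E}$ as well (because $c_{4}^{3} - c_{6}^{2} = 1728\Delta_{E}$ forces a common prime of $c_{4}$ and $c_{6}$ to divide $\Delta_{E}$, and a prime dividing only one of them divides $1728\Delta_{E}$ hence $\Delta_{E}$ by $6 \mid \Delta_{E}$... more carefully, a prime $p\geq 5$ dividing $c_4$ but not $c_6$ divides $1728\Delta_E$ so divides $\Delta_E$). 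Thus $\operatorname{rad}\!\left(6\Delta_{E} c_{4} c_{6}\right) = \operatorname{rad}\!\left(\Delta_{E}\right) = N_{E}$. Hence the goodness of the $ABC$ triple reduces to $N_{E} < \max\!\left\{1728|\Delta_{E}|,\ |c_{4}|^{3},\ c_{6}^{2}\right\}$, which follows immediately from $E$ being a good elliptic curve: $N_{E}^{6} < \max\!\left\{|c_{4}|^{3}, c_{6}^{2}\right\} \leq \max\!\left\{1728|\Delta_{E}|, |c_{4}|^{3}, c_{6}^{2}\right\}$, and since $N_{E} \geq 1$ we get $N_{E} \leq N_{E}^{6} < \max\!\left\{1728|\Delta_{E}|, |c_{4}|^{3}, c_{6}^{2}\right\}$.

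The main obstacle is the gcd bookkeeping at the primes $2$ and $3$: one must argue that although $1728 = 2^{6}3^{3}$ contributes powers of $2$ and $3$ to the first entry, and $c_{4}, c_{6}$ may themselves be divisible by $2$ or $3$, the three entries $1728\Delta_{E}$, $c_{4}^{3}$, $c_{6}^{2}$ still have no common factor. Here semistability gives $v_{p}(\Delta_{E}) = 0$ or $v_{p}(c_{4}) = 0$ for each $p$; combined with $v_{2}(\Delta_{E}) \geq 1$ and $v_{3}(\Delta_{E}) \geq 1$ we get $v_{2}(c_{4}) = 0$ and $v_{3}(c_{4}) = 0$, so $v_{2}(c_{4}^{3}) = v_{3}(c_{4}^{3}) = 0$, which already kills any common factor of $2$ or $3$ across the triple. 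For odd primes $p \geq 5$, semistability directly gives that $p$ cannot divide both $\Delta_{E}$ and $c_{4}$, and the identity $c_{4}^{3} - c_{6}^{2} = 1728\Delta_{E}$ then prevents $p$ from dividing all three. I would also double-check the sign normalization so that the statement matches the displayed form $\left(1728\Delta_{E}, -c_{4}^{3}, c_{6}^{2}\right)$ with $1728\Delta_{E} + (-c_{4}^{3}) + c_{6}^{2}$ reading correctly as $a + b = c$ with $c = c_{6}^{2}$ when $c_{6}^{2}$ is the largest, or reorder appropriately, noting that the quality and goodness are symmetric in the three entries up to sign.
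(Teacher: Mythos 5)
The central step of your argument is false. You claim that every prime dividing $c_{4}$ or $c_{6}$ also divides $\Delta_{E}$, so that $\operatorname{rad}\!\left(1728\Delta_{E}c_{4}^{3}c_{6}^{2}\right)=\operatorname{rad}\!\left(\Delta_{E}\right)=N_{E}$. In fact the opposite holds: if $p\geq5$ divides $c_{4}$ but not $c_{6}$, then $1728\Delta_{E}=c_{4}^{3}-c_{6}^{2}\equiv-c_{6}^{2}\not\equiv0\ \operatorname{mod}p$, so $p$ does \emph{not} divide $\Delta_{E}$; indeed for a semistable curve $\gcd\!\left(c_{4},\Delta_{E}\right)=1$, so essentially no prime factor of $c_{4}$ appears in $\Delta_{E}$. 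Your parenthetical ``a prime dividing only one of them divides $1728\Delta_{E}$'' has the implication reversed: the identity only forces a \emph{common} prime of $c_{4}$ and $c_{6}$ to divide $1728\Delta_{E}$. Consequently the radical of the triple is not $N_{E}$; the best available bound is $\operatorname{rad}\!\left(1728\Delta_{E}c_{4}c_{6}\right)\leq N_{E}\left\vert c_{4}\right\vert\left\vert c_{6}\right\vert$, where $\operatorname{rad}\!\left(1728\Delta_{E}\right)=N_{E}$ is the place where semistability and $6\mid\Delta_{E}$ enter. Your reduction of the goodness of the triple to $N_{E}<\max\!\left\{1728\left\vert\Delta_{E}\right\vert,\left\vert c_{4}\right\vert^{3},c_{6}^{2}\right\}$ therefore proves nothing: that inequality is trivially true for every elliptic curve, and it would make the lemma hold without the hypothesis that $E$ is good, contradicting the examples given after the lemma.

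The substantive content you are missing is how to pass from $N_{E}\left\vert c_{4}\right\vert\left\vert c_{6}\right\vert$ to the maximum, and this is exactly where the full strength of $N_{E}^{6}<\max\!\left\{\left\vert c_{4}^{3}\right\vert,c_{6}^{2}\right\}$ is needed. The paper argues in two cases: if the maximum is $\left\vert c_{4}^{3}\right\vert$, then $N_{E}<\left\vert c_{4}\right\vert^{1/2}$ and $\left\vert c_{6}\right\vert<\left\vert c_{4}\right\vert^{3/2}$, hence $N_{E}\left\vert c_{4}\right\vert\left\vert c_{6}\right\vert<\left\vert c_{4}\right\vert^{3}$; if the maximum is $c_{6}^{2}$, then $N_{E}<\left\vert c_{6}\right\vert^{1/3}$ and $\left\vert c_{4}\right\vert<\left\vert c_{6}\right\vert^{2/3}$, hence $N_{E}\left\vert c_{4}\right\vert\left\vert c_{6}\right\vert<c_{6}^{2}$. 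Your gcd bookkeeping at $2$ and $3$ (semistability together with $6\mid\Delta_{E}$ forces $v_{2}\!\left(c_{4}\right)=v_{3}\!\left(c_{4}\right)=0$) is fine and is what is needed to see that the triple is an $ABC$ triple with $\operatorname{rad}\!\left(1728\Delta_{E}\right)=N_{E}$, but the goodness inequality itself is not established by your argument.
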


\begin{proof}
Since $E$ is good, $N_{E}^{6}<\max\!\left\{  \left\vert c_{4}^{3}\right\vert
,c_{6}^{2}\right\}  $. Since $E$ is semistable and $\Delta\equiv
0\ \operatorname{mod}6$, we have that $\left(  1728\Delta_{E},-c_{4}^{3}%
,c_{6}^{2}\right)  $ is an $ABC$ triple. Moreover, $\operatorname{rad}%
\!\left(  1728\Delta_{E}\right)  =N_{E}$. Now observe that%
\[
\operatorname{rad}\!\left(  1728\Delta c_{4}c_{6}\right)  =N_{E}%
\operatorname{rad}\!\left(  c_{4}c_{6}\right)  \leq N_{E}\left\vert
c_{4}\right\vert \left\vert c_{6}\right\vert .
\]
It suffices to show that $N_{E}\left\vert c_{4}\right\vert \left\vert
c_{6}\right\vert <\max\!\left\{  \left\vert c_{4}^{3}\right\vert ,c_{6}%
^{2}\right\}  $ since this would imply that $\operatorname{rad}\!\left(
1728\Delta c_{4}c_{6}\right)  <\max\!\left\{  \left\vert c_{4}^{3}\right\vert
,c_{6}^{2},1728\left\vert \Delta_{E}\right\vert \right\}  $.

\textbf{Case 1.} Suppose $\max\!\left\{  \left\vert c_{4}^{3}\right\vert
,c_{6}^{2}\right\}  =\left\vert c_{4}^{3}\right\vert $. Then $N_{E}<\left\vert
c_{4}\right\vert ^{1/2}$ and $\left\vert c_{6}\right\vert <\left\vert
c_{4}\right\vert ^{3/2}$ since $N_{E}^{6},c_{6}^{2}<\left\vert c_{4}%
^{3}\right\vert $. Consequently, 
$N_{E}\left\vert c_{4}\right\vert \left\vert c_{6}\right\vert <\left\vert
c_{4}\right\vert ^{1/2}\left\vert c_{4}\right\vert \left\vert c_{4}\right\vert
^{3/2}=\left\vert c_{4}^{3}\right\vert$ .

\textbf{Case 2.} Suppose $\max\!\left\{  \left\vert c_{4}^{3}\right\vert
,c_{6}^{2}\right\}  =c_{6}^{2}$. Then $N_{E}<\left\vert c_{6}\right\vert
^{1/3}$ and $\left\vert c_{4}\right\vert <\left\vert c_{6}\right\vert ^{2/3}$.
Hence%
\[
N_{E}\left\vert c_{4}\right\vert \left\vert c_{6}\right\vert <\left\vert
c_{6}\right\vert ^{1/3}\left\vert c_{6}\right\vert ^{2/3}\left\vert
c_{6}\right\vert =c_{6}^{2}.
\]
In both cases we have that $\operatorname{rad}\!\left(  1728\Delta c_{4}%
c_{6}\right)  <\max\!\left\{  \left\vert c_{4}^{3}\right\vert ,c_{6}%
^{2},1728\left\vert \Delta_{E}\right\vert \right\}  $.
\end{proof}

The following result is immediate by Lemma \ref{LemSS}.

\begin{corollary}
Assume the statement of Theorem \ref{goodtorsionmainthm} and let $\left\{
H_{T}\!\left(  n\right)  \right\}  _{n\geq0}$ be the sequence of good elliptic
curves associated to $T\neq C_{5}$. Then
\[
\left\{  \left(  1728D_{T}\!\left(  a_{n},b_{n}\right)  ,B_{T}\!\left(
a_{n},b_{n}\right)  ^{2},A_{T}\!\left(  a_{n},b_{n}\right)
^{3}\right)  \right\}
\]
is a sequence of good $ABC$ triples for each $n\geq0$.
\end{corollary}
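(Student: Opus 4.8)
The plan is to apply Lemma~\ref{LemSS} directly to each elliptic curve $H_T(n)$ in the sequence, so the main task is to verify that every hypothesis of that lemma is met. First I would recall from the setup of Theorem~\ref{goodtorsionmainthm} that $H_T(n) = F_T(a_{n-1}, b_{n-1})$ (after reindexing to match the corollary's statement), that $\{P_n^T\}$ is a sequence of good positive $ABC$ triples with $a_n \equiv 0 \pmod 6$ and $b_n/a_n > \theta_T$, and — by Theorem~\ref{goodtorsionmainthm} itself — that $H_T(n)$ is a good elliptic curve. So the first hypothesis of Lemma~\ref{LemSS}, goodness of the curve, is already in hand.

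Next I would check semistability and the divisibility condition on the discriminant. By Proposition~\ref{Propmin}, since $T \neq C_5$ and $a_{n-1} \equiv 0 \pmod 6$ (with the extra congruences mod $5$ or $7$ when $T = C_{10}$ or $C_7$), the curve $F_T(a_{n-1},b_{n-1})$ is semistable and its minimal discriminant is $D_T(a_{n-1}, b_{n-1})$ with associated invariants $c_4 = A_T(a_{n-1},b_{n-1})$ and $c_6 = B_T(a_{n-1},b_{n-1})$. It remains to see that this minimal discriminant is divisible by $6$. Since $a_{n-1}$ is divisible by $6$ and $D_T$ is (up to the explicit factors recorded in the tables) a polynomial each of whose terms carries a factor of $ab(a+b)$ or similar — in any case $a \mid D_T(a,b)$ by inspection of Table~\ref{ta:DT} — we get $6 \mid a_{n-1} \mid D_T(a_{n-1},b_{n-1})$, so $\Delta \equiv 0 \pmod 6$. (Concretely: $1728 D_T(a_{n-1},b_{n-1}) = a_n$ is the first entry of the good $ABC$ triple $P_n^T$ and is certainly even; divisibility by $3$ follows from $3 \mid a_{n-1}$.)

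With all hypotheses verified, Lemma~\ref{LemSS} applied to $E = H_T(n)$ yields that the triple $(1728\Delta_E, -c_4^3, c_6^2) = (1728 D_T(a_{n-1}, b_{n-1}), -A_T(a_{n-1},b_{n-1})^3, B_T(a_{n-1},b_{n-1})^2)$ is good. Finally, observe that a triple $(x, y, z)$ is good if and only if any permutation of $(\pm x, \pm y, \pm z)$ summing appropriately is good, since $\operatorname{rad}$ and the max of absolute values are insensitive to sign and order; from $1728\Delta_E = c_4^3 - c_6^2$ we have $1728\Delta_E + c_6^2 = c_4^3$, so $(1728 D_T(a_{n-1},b_{n-1}), B_T(a_{n-1},b_{n-1})^2, A_T(a_{n-1},b_{n-1})^3)$ is the same triple up to sign and reordering, hence also good. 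This holds for every $n \geq 1$, giving the claimed sequence of good $ABC$ triples.

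I do not expect any serious obstacle here: the corollary is essentially a bookkeeping consequence of Lemma~\ref{LemSS} combined with the structural facts already established in Proposition~\ref{Propmin} and Theorem~\ref{goodtorsionmainthm}. The only point requiring a moment's care is confirming $\Delta \equiv 0 \pmod 6$, which reduces to the observation that $a_{n-1} \mid D_T(a_{n-1},b_{n-1})$ — immediate from the homogeneous-polynomial form of $D_T$ in Table~\ref{ta:DT} — together with $6 \mid a_{n-1}$, which is part of the standing hypotheses.
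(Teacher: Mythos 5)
Your proposal is correct and is essentially the paper's own argument: the paper simply declares the corollary ``immediate by Lemma \ref{LemSS}'', and you supply exactly the intended verification (goodness of the curve from Theorem \ref{goodtorsionmainthm}, semistability and the identification $\Delta=D_T$, $c_4=A_T$, $c_6=B_T$ from Proposition \ref{Propmin}, divisibility of $D_T$ by $6$, and insensitivity of goodness to signs and ordering). The only tiny imprecision is the claim that $a\mid D_T(a,b)$ ``by inspection'' — for $T=C_{10}$ the factor $\tfrac{1}{4096}$ in Table \ref{ta:DT} means $2\mid D_T$ needs a short $2$-adic valuation check rather than pure inspection, but the conclusion $D_T\equiv 0\ \operatorname{mod}6$ still holds, so the proof goes through.
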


We conclude this section with some commentary on Lemma \ref{LemSS}. First, we
note that the assumption that $\Delta\equiv0\ \operatorname{mod}6$ is
necessary as illustrated by the following two examples which do not satisfy
this congruence.

\begin{example}
Let $E_{1}$ and $E_{2}$ be the elliptic curves given by the Weierstrass models%
\begin{align*}
E_{1}  &  :y^{2}+xy=x^{3}+x^{2}-2217690832x+40177507619839,\\
E_{2}  &  :y^{2}+xy+y=x^{3}-2091298661x+36762769880016.
\end{align*}
Both curves are semistable since $N_{E_{1}}=3\cdot5\cdot13\cdot19\cdot83$ and
$N_{E_{2}}=2\cdot7\cdot21799$. Moreover, the invariants $c_{4,j}$ and
$c_{6,j}$ associated to a global minimal model of $E_{j}$ are%
\[%
\begin{array}
[c]{llll}%
c_{4,1}=827\cdot128717243 & \qquad & \qquad & c_{6,2}=-17\cdot53\cdot
3023\cdot5471\cdot2329577,\\
c_{4,2}=5\cdot20076467141 & \qquad & \qquad & c_{6,2}=-5869\cdot5412026537713.
\end{array}
\]
Consequently, both curves are good since $N_{E_{j}}^{6}<c_{4,j}^{3}$ for
$j=1,2$. Next, let $\Delta_{E_{j}}$ denote the minimal discriminant of $E_{j}%
$. Since the primes dividing $\Delta_{E_{j}}$ are exactly the primes dividing
$N_{E_{j}}$, we deduce that $\left(  1728\Delta_{E_{j}},c_{6,j}^{2}%
,c_{4,j}^{3}\right)  $ is a positive $ABC$ triple. However, it is not good
since $\operatorname{rad}\!\left(  6\Delta_{E_{j}}c_{6,j}c_{4,j}\right)
>c_{4,j}^{3}$ for $j=1,2$.
\end{example}

In addition, the converse to Lemma \ref{LemSS} does not hold. This is
illustrated by considering the elliptic curve $E$ given by
\[
E:y^{2}+xy=x^{3}-2342114817x-46491207963039.
\]
The curve $E$ is semistable since $N_{E}=2\cdot3\cdot7\cdot67\cdot127$. Next,
let $c_{4}$ and $c_{6}$ be the invariants associated to a global minimal model
of $E$ and let $\Delta$ denote the minimal discriminant of $E$. Then%
\[
\Delta=-2^{3}3^{6}7^{3}67^{9}127^{3},\qquad c_{4}=19\cdot53\cdot
157\cdot251\cdot2833,\qquad c_{6}=13^{2}73\cdot5651\cdot576166333.
\]
Hence, $\left(  -1728\Delta,c_{4}^{3},c_{6}^{2}\right)  $ is a positive $ABC$
triple satisfying $\Delta\equiv0\ \operatorname{mod}6$. The $ABC$ triple is
also good since $\operatorname{rad}\!\left(  \Delta c_{6}c_{4}\right)
<c_{6}^{2}$. However, $E$ is not good since $c_{6}^{2}<N_{E}^{6}$. We note
that $E$ is $3$-isogenous to the good elliptic curve $F:y^{2}+xy=x^{3}%
-193149169647x-32672893402475361$.

\section{Good Elliptic Curves Arising from
\texorpdfstring{$F_{T}$}{}\label{sec:examples}}

In \cite{MR1273410} and \cite{MR1641058}, Nitaj considered families of
elliptic curves $E\!\left(  a,b\right)  $ with the property that $ab\left(
a+b\right)  $ divided the discriminant of $E\!\left(  a,b\right)  $. For these
families, he developed algorithms for constructing good elliptic curves with
large modified Szpiro ratio from good $ABC$ triples $\left(  a,b,c\right)  $.
In what follows, we also consider good $ABC$ triples $\left(  a,b,c\right)  $
and compute the modified Szpiro ratio of $F_{T}\!\left(  a,b\right)  $ and
$F_{T}\!\left(  b,a\right)  $. We note that $F_{T}\!\left(  a,b\right)  $ and
$F_{T}\!\left(  b,a\right)  $ are $\mathbb{Q}(a,b)$-isomorphic if $T \neq C_1,C_2,C_3,C_7,C_9,C_{10}$. As a consequence, we construct a small database
of good elliptic curves. We note that by Lemma \ref{lemmadisc}, we expect a
considerable number of good elliptic curves to arise from $F_{T}$ in this
fashion. This is further justified by the following results:

\begin{lemma}
\label{Lemmax}For $T\neq C_{5}$, define $z\!\left(  t\right)  =\max\!\left\{
\left\vert A_{T}\!\left(  1,t\right)  ^{3}\right\vert ,B_{T}\!\left(
1,t\right)  ^{2}\right\}  -\hat{D}\left(  1,t\right)  ^{6}$. Set $\xi_{C_{10}%
}=0$ and for $T\neq C_{10}$, let $\xi_{T}=\max\left\{  t\in%
%TCIMACRO{\U{211d} }%
%BeginExpansion
\mathbb{R}
%EndExpansion
\mid z\!\left(  t\right)  =0\right\}  $, so that%
\[
\xi_{T}\approx\left\{
\begin{array}
[c]{ll}%
0 & \text{if }T\not =C_{3},C_{7},C_{8},C_{9},C_{12}\\
0.1686 & \text{if }T=C_{3},\\
4.3444 & \text{if }T=C_{7},
\end{array}
\right.  \qquad\text{and}\qquad\xi_{T}\approx\left\{
\begin{array}
[c]{ll}%
2.0198 & \text{if }T=C_{8},\\
3.2938 & \text{if }T=C_{9},\\
2.9354 & \text{if }T=C_{12}.
\end{array}
\right.
\]
Then $z\!\left(  t\right)  >0$ on the interval $\left(  \xi_{T},\infty\right)
$. In particular, $\hat{D}\left(  a,b\right)  ^{6}<\max\!\left\{  \left\vert
A_{T}\!\left(  a,b\right)  ^{3}\right\vert ,B_{T}\!\left(  a,b\right)
^{2}\right\}  $ if~$\frac{b}{a}>\xi_{T}$.
\end{lemma}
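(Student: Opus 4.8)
The plan is to reduce the claim to a statement about a single-variable polynomial (in $t = b/a$) and then verify it via standard real-analytic bounds for $t$ large, checking the transition point numerically for each $T$. First I would observe that by Lemma~\ref{HomPoly}, each of $A_T(a,b)$, $B_T(a,b)$, $D_T(a,b)$, and $\hat D_T(a,b)$ is a (quasi-)homogeneous polynomial in $a$ and $b$, so writing $t = b/a$ we have $A_T(a,b)^3 = a^{n_T} A_T(1,t)^3$, $B_T(a,b)^2 = a^{n_T} B_T(1,t)^2$, and $\hat D_T(a,b)^6 = a^{n_T} \hat D_T(1,t)^6$. Since $a^{n_T} > 0$ (note $n_T \in \{24, 36, 48\}$ is even), dividing through by $a^{n_T}$ shows that the inequality $\hat D_T(a,b)^6 < \max\{|A_T(a,b)^3|, B_T(a,b)^2\}$ is equivalent to $z(t) > 0$ where $t = b/a$. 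Thus it suffices to prove that the single-variable function $z(t) = \max\{|A_T(1,t)^3|, B_T(1,t)^2\} - \hat D(1,t)^6$ is positive on $(\xi_T, \infty)$.

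Next I would analyze $z(t)$ as $t \to \infty$ by comparing degrees. One of $|A_T(1,t)^3|$ or $B_T(1,t)^2$ is a polynomial of degree $n_T$ in $t$, while $\hat D(1,t)^6$ also has degree $n_T$ (since $\hat D_T$ has degree $n_T/6$ by Lemma~\ref{HomPoly}); the key point is that the leading coefficient of $\max\{|A_T(1,t)^3|, B_T(1,t)^2\}$ strictly exceeds that of $\hat D(1,t)^6$ — this is essentially the same phenomenon already exploited in Lemma~\ref{lemmadisc} and in Corollary~\ref{corfT}, reflecting that $\hat D_T$ is built from $D_T$ by stripping the factor $ab(a+b)$ (or a power of $2$), so its contribution is genuinely smaller. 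Concretely, I would confirm via a computer algebra system that $z(t)$, after clearing the absolute value on the relevant range (using that $A_T(1,t)$ or $B_T(1,t)$ has a definite sign for $t$ past the largest real root, as established in Lemmas~\ref{poslemma} and~\ref{lemforhT}), is a polynomial with positive leading coefficient, so $z(t) \to +\infty$.

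The remaining step is to pin down $\xi_T$: for each of the twelve values of $T \neq C_5$ I would compute the real roots of $z(t) = 0$ and take $\xi_T$ to be the largest one (setting $\xi_T = 0$ when there is no positive root, and $\xi_{C_{10}} = 0$ by fiat since the relevant quantity is already dominant on the positive axis), then verify numerically that $z$ evaluated at one point slightly above $\xi_T$ is positive, which combined with continuity and the absence of further roots gives $z(t) > 0$ on all of $(\xi_T, \infty)$. The final sentence of the statement, $\hat D(a,b)^6 < \max\{|A_T(a,b)^3|, B_T(a,b)^2\}$ whenever $b/a > \xi_T$, then follows immediately by multiplying the inequality $z(b/a) > 0$ by $a^{n_T} > 0$.

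I expect the main obstacle to be bookkeeping rather than mathematical depth: one must correctly handle the absolute value $|A_T(1,t)^3|$ for each $T$ (equivalently, know the sign of $A_T(1,t)$ on the relevant tail, which the earlier lemmas supply), determine which of the two terms achieves the maximum for large $t$ — this may switch across the twelve families — and carefully enumerate the real roots of $z(t)$, which can be a high-degree polynomial, to be sure $\xi_T$ is genuinely the largest. As with the neighboring Lemmas~\ref{poslemma} and~\ref{lemforhT}, this is most cleanly done by a computer algebra system, and I would reference the Mathematica notebooks already cited in Section~\ref{sec:ABCtriples} as the source of the numerical root computations.
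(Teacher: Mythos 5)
Your proposal matches the paper's proof of this lemma: the single-variable positivity of $z(t)$ on $(\xi_T,\infty)$ is verified by a computer algebra system, and the two-variable inequality is then deduced from the homogeneity relations of Lemma~\ref{HomPoly} (using that $n_T$ is even, so $a^{n_T}>0$). One small caveat: your heuristic that the leading coefficient of $\max\{|A_T(1,t)^3|,B_T(1,t)^2\}$ strictly exceeds that of $\hat{D}_T(1,t)^6$ is false in several families (e.g.\ $T=C_7$, where both leading coefficients equal $1$ and the positivity for large $t$ comes from the next-order term), but this does not affect the argument since the decisive step is the computer verification, exactly as in the paper.
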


\begin{proof}
For $T\neq C_5,C_{10}$, \cite[Mathematica\_Verifications.nb]{GitHubGoodEC} verifies that $\xi_T$ is the greatest real root of $z_T(x)$. For $T=C_{10}$, loc. cit. shows that $z_T(x)$ has no real roots. For $T\neq C_5$, it is also shown that $z_T(y)>0$ for some $y>\xi_T$. Consequently, $z_T\!\left(  t\right)  >0$ on the interval $\left(  \xi_{T},\infty\right)$ since each $z_T(x)$ is continuous. This establishes the first statement. The second
statement then follows from the first statement and Lemma \ref{HomPoly}.
\end{proof}

\begin{lemma}
\label{Lemmadatabase}Let $T\neq C_{5}$ and suppose $\left(  a,b,a+b\right)  $
is a good $ABC$ triple with $\frac{b}{a}>\xi_{T}$. Suppose further that $a$ is
even if $T=C_{10}$. If $F_{T}=F_{T}\!\left(  a,b\right)  $ is a semistable
elliptic curve with minimal discriminant $D_{T}=D_{T}\!\left(  a,b\right)  $,
then $F_{T}$ is a good elliptic curve.
\end{lemma}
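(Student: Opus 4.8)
The plan is to combine the ingredients already assembled in the excerpt: the conductor formula for semistable curves, the bound on the radical of the discriminant from Lemma \ref{lemmadisc}, and the inequality $\hat D_T(a,b)^6 < \max\{|A_T(a,b)^3|, B_T(a,b)^2\}$ from Lemma \ref{Lemmax}. First I would invoke the hypothesis that $F_T = F_T(a,b)$ is semistable with minimal discriminant $D_T(a,b)$; by the standard fact recalled in Section 2, this gives $N_T = N_{F_T(a,b)} = \operatorname{rad}(D_T(a,b))$. Since $F_T$ is semistable, its global minimal model has invariants $c_4, c_6$, and — while the present lemma does not assume the full hypotheses of Proposition \ref{Propmin} — the claim that $D_T(a,b)$ is the minimal discriminant forces (via the identity $A_T^3 - B_T^2 = 1728 D_T$ and $\gcd$ considerations) $c_4 = \pm A_T$ up to a unit, so that $\max\{|c_4^3|, c_6^2\} = \max\{|A_T(a,b)^3|, B_T(a,b)^2\}$; I would spell this out carefully or simply cite the relevant part of Proposition \ref{Propmin} if the standing hypotheses are in force.

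Next I would bound the conductor. The hypothesis $\frac{b}{a} > \xi_T$ in particular gives $\frac{b}{a}$ large enough that, together with $(a,b,a+b)$ being a good $ABC$ triple (and $a$ even when $T = C_{10}$, matching the parity hypothesis of Lemma \ref{lemmadisc}), Lemma \ref{lemmadisc} yields
\[
\operatorname{rad}\!\left(D_T(a,b)\right) < \left|\hat D_T(a,b)\right|.
\]
Here I would need to check the minor point that the hypotheses of Lemma \ref{lemmadisc} are met: that lemma requires $(a,b,a+b)$ good positive with $a$ even. Positivity and goodness are given; $a$ even holds automatically when $T = C_{10}$ by assumption, and for the remaining $T$ one must observe that $D_T(a,b)$ (being a product including a factor divisible by $2$ in each case, or by inspecting Table \ref{ta:DhatT}) still satisfies the radical bound — this is exactly the content of Lemma \ref{lemmadisc}'s case analysis, so I would simply apply it. Raising to the sixth power and then applying Lemma \ref{Lemmax} (whose hypothesis $\frac{b}{a} > \xi_T$ is precisely ours) gives
\[
N_T^6 = \operatorname{rad}\!\left(D_T(a,b)\right)^6 < \hat D_T(a,b)^6 < \max\!\left\{\left|A_T(a,b)^3\right|, B_T(a,b)^2\right\} = \max\!\left\{\left|c_4^3\right|, c_6^2\right\},
\]
which is exactly the statement that $F_T$ is good.

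The main obstacle — really the only subtlety — is reconciling the hypotheses. Lemma \ref{lemmadisc} was stated for good \emph{positive} $ABC$ triples with $a$ even, whereas the present lemma only explicitly demands $a$ even for $T = C_{10}$; I expect the resolution is that for the other thirteen torsion types the factor structure of $D_T$ (visible in Table \ref{ta:DT}) already contains enough $2$'s that the radical estimate goes through regardless, and this is baked into Lemma \ref{lemmadisc}'s proof. Similarly, one must confirm that "$F_T$ semistable with minimal discriminant $D_T$" suffices to identify $\max\{|c_4^3|, c_6^2\}$ with $\max\{|A_T^3|, B_T^2\}$ without re-deriving all of Proposition \ref{Propmin}; the cleanest route is to note that semistability plus the discriminant identity pins down $c_4 = u^{-4} A_T$ with $u$ a unit, hence $u = \pm 1$ and $c_4^3 = A_T^3$, $c_6^2 = B_T^2$. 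Once these bookkeeping points are settled, the chain of inequalities above closes the argument.
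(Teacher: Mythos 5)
Your proposal is correct and takes essentially the same route as the paper: identify the invariants of the global minimal model with $A_{T}$ and $B_{T}$ via Proposition \ref{Propmin}, bound the conductor by $N_{T}=\operatorname{rad}\!\left(D_{T}\right)<\left\vert \hat{D}_{T}\right\vert$ via Lemma \ref{lemmadisc}, and conclude with Lemma \ref{Lemmax}. The one quibble is your guessed reason for dropping the evenness hypothesis when $T\neq C_{10}$: it is not that $D_{T}$ carries extra factors of $2$, but simply that the proof of Lemma \ref{lemmadisc} uses $a$ even only in its $C_{10}$ case, which is exactly the observation the paper makes.
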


\begin{proof}
By Proposition \ref{Propmin}, $A_{T}$ and $B_{T}$ are the invariants $c_{4}$
and $c_{6}$ associated to a global minimal model of $F_{T}$. Next, observe
that the assumption that $a$ is even in Lemma \ref{lemmadisc} is only required
if $T=C_{10}$. Therefore $N_{T}=\operatorname{rad}\!\left(  D_{T}\!\left(
a,b\right)  \right)  <\left\vert \hat{D}_{T}\!\left(  a,b\right)  \right\vert
$ where $N_{T}$ is the conductor of $F_{T}$. We conclude that $F_{T}$ is a
good elliptic curve by Lemma \ref{Lemmax}.
\end{proof}

Motivated by Lemma \ref{Lemmadatabase}, we next compute $F_{T}\!\left(
a,b\right)  $ for all good positive $ABC$ triples $\left(  a,b,c\right)  $
with $c\leq 1.2\cdot10^{11}$. This is done by using the data computed by the
$ABC$@Home Project \cite{Smit}, which found all good positive $ABC$ triples
$\left(  a,b,c\right)  $ with $c<10^{18}$. Using this data, we consider
\[
\mathcal{S}=\left\{  \left(  a,b\right)  \mid\left(  a,b,a+b\right)  \text{ is
a good positive }ABC\text{ triple with }a<b<a+b \leq 1.2\cdot10^{11}\right\}  .
\]
From the $ABC$@Home Project, we have that $\left\vert \mathcal{S}\right\vert
=122\ 554$. Now denote by $\left[  E\right]  _{%
%TCIMACRO{\U{211a} }%
%BeginExpansion
\mathbb{Q}
%EndExpansion
}$, the $%
%TCIMACRO{\U{211a} }%
%BeginExpansion
\mathbb{Q}
%EndExpansion
$-isomorphism class of an elliptic curve $E/%
%TCIMACRO{\U{211a} }%
%BeginExpansion
\mathbb{Q}
%EndExpansion
$ and let 
\[\mathcal{D}_{T}=\left\{  \left[  F_{T}\!\left(  X\right)
\right]  _{%
%TCIMACRO{\U{211a} }%
%BeginExpansion
\mathbb{Q}
%EndExpansion
}\mid X=\left(  a,b\right),\left(  b,a\right) \text{ for } (a,b)\in\mathcal{S}%
\right\}  .
\]
Let $\mathcal{G}_{T}$ be the subset of $\mathcal{D}_{T}$
consisting of good elliptic curves. Below, we list the size of $\mathcal{D}%
_{T}$, $\mathcal{G}_{T}$, and the largest modified Szpiro ratio occurring in
$\mathcal{G}_{T}$, which we denote by $M_{T}=\max\!\left\{  \sigma
_{m}\!\left(  E\right)  \mid\left[  E\right]  _{%
%TCIMACRO{\U{211a} }%
%BeginExpansion
\mathbb{Q}
%EndExpansion
}\in\mathcal{G}_{T}\right\}  $.%
\[
{\renewcommand*{\arraystretch}{1.2}%
\begin{tabular}
[c]{cccccccc}\hline
$T$ & $C_{1}$ & $C_{2}$ & $C_{3}$ & $C_{4}$ & $C_{6}$ & $C_{7}$ & $C_{8}%
$\\\hline
$\left\vert \mathcal{D}_{T}\right\vert $ & $249 \ 109$ & $249 \ 086$ & $249 \ 108$ & $124 \ 554$ & $124 \ 554$ & $249 \ 108$ & $124 \ 554$\\\hline
$\left\vert \mathcal{G}_{T}\right\vert $ & $237 \ 497$ & $249 \ 086$ & $233 \ 667$ & $108 \ 070$ & $115 \ 103$ & $228 \ 468$ & $124 \ 088$\\\hline
$M_{T}$ & $7.56585$ & $7.52493$ & $6.92343$ & $6.87784$ & $7.06959$ & $7.22651$ & $7.10600$\\\hline
&  &  &  &  &  &  & \\\hline
$T$ & $C_{9}$ & $C_{10}$ & $C_{12}$ & $C_{2}\times C_{2}$ & $C_{2}\times C_{4}$ & $C_{2}\times C_{6}$ & $C_{2}\times C_{8}$\\\hline
$\left\vert \mathcal{D}_{T}\right\vert $ & $249 \ 108$ & $249 \ 108$ & $124 \ 554$ & $124 \ 543$ & $124 \ 543$ & $124 \ 554$ & $124 \ 543$\\\hline
$\left\vert \mathcal{G}_{T}\right\vert $ & $232 \ 843$ & $206 \ 026$ & $124 \ 242$ & $124 \ 543$ & $124 \ 543$ & $114 \ 631$ & $124 \ 543$\\\hline
$M_{T}$ & $6.92320$ & $7.31522$ & $6.96451$ & $7.36206$ & $7.20046$ & $7.06734$ & $7.10984$\\\hline
\end{tabular}
\ }%
\]

\noindent Consequently, there are $2 \ 491 \ 026$ $\mathbb{Q}$-isomorphism classes of elliptic curves in $\mathcal{D}=\bigcup_{T}\mathcal{D}_{T}$. Of these, $2 \ 347 \ 350$ are good elliptic curves. Thus $\approx 94.23 \%$ of the elliptic curves in $\mathcal{D}$ are good. Figure \ref{fi:Histo} summarizes the distribution of the modified Szpiro ratio of all elliptic curves in $\mathcal{D}$. We note that the bin size of the histogram is set to $20 \ 000$. Lastly, Figure~\ref{fi:Scatter} plots the quality of $q(a,b,a+b)$ for $(a,b) \in \mathcal{S}$ vs. the modified Szpiro ratios of $F_T(a,b)$ and $F_T(b,a)$. Of note is that each elliptic curve in $\mathcal{D}$ that arises from an $(a,b) \in \mathcal{S}$ with $q(a,b,a+b) >1.2281$ is good. That being said, the number of good $ABC$ triples in $\mathcal{S}$ satisfying this inequality is $1\ 229$.

\noindent \textbf{Acknowledgments.} The author would like to thank Edray
Goins for his helpful
comments on this work. The author also thanks the referee for their helpful comments and
suggestions.

\begin{figure}[H]
\caption{Histogram of $\sigma_{m}(E)$ for $\left[  E\right]  _{\mathbb{Q} }
\in \mathcal{D}$}%
\label{fi:Histo}
\centering
\includegraphics[scale=0.9]{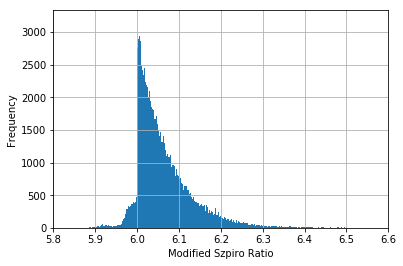}\end{figure}

\begin{figure}[H]
\caption{Scatterplot of $q(a,b,a+b)$ for $(a,b)\in \mathcal{S}$ vs. $\sigma_m (F_T(a,b))$ and $\sigma_m (F_T(b,a))$}
\label{fi:Scatter}
\centering
\includegraphics[scale=0.95]{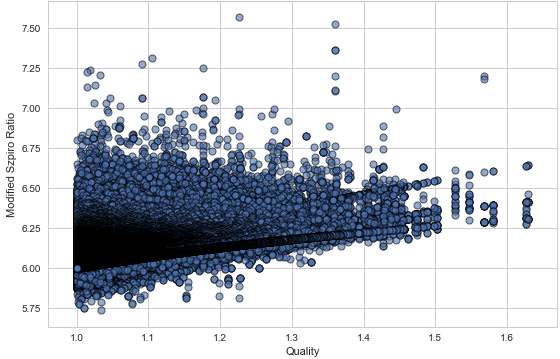}\end{figure}

\newpage

\section{\texorpdfstring{$F_{T}$}{} and its Associated Invariants}\label{SectionTables}
\vspace{-2em}
{\renewcommand*{\arraystretch}{1.2}
\begin{longtable}{C{0.6in}C{1in}C{1.6in}C{2in}}
\caption[Weierstrass Model of $F_{T}$]{The Weierstrass model for $F_{T}$ where\\
$F_{T}:y^{2}=x^{3}+w_{T,1}x^{2}+w_{T,2}x$ if $T=C_{2},C_{2}\times C_{2}$\\
$F_{T}:y^{2}+w_{T,1}xy+w_{T,2}y=x^{3}+v_{T}w_{T,2}x^{2}$ if $T\neq C_{1}%
,C_{2},C_{2}\times C_{2}$\\
$F_{T}:y^{2}+w_{C_{9},1}xy+w_{C_{9},2}y=x^{3}+v_{T}w_{C_{9},2}x^{2}%
+w_{T,1}x+w_{T,2}$ if $T=C_{1}$}\\
\hline
$T$ & $v_{T}$ & $w_{T,1}$ & $w_{T,2}$ \\
\hline
\endfirsthead
\caption[]{\emph{continued}}\\
\hline
$T$ & $v_{T}$ & $w_{T,1}$ & $w_{T,2}$\\
\hline
\endhead
\hline
\multicolumn{2}{r}{\emph{continued on next page}}
\endfoot
\hline
\endlastfoot
$C_{1}$ & $a^{3}$ & $5ab(a+b)(a^{9}-8a^{7}b^{2}-25a^{6}b^{3}-47a^{5}%
b^{4}-61a^{4}b^{5}-53a^{3}b^{6}-28a^{2}b^{7}-9ab^{8}-b^{9})$ & $ab(a+b)(a^{15}%
-13a^{14}b-65a^{13}b^{2}-145a^{12}b^{3}-266a^{11}b^{4}-608a^{10}%
b^{5}-1453a^{9}b^{6}-2719a^{8}b^{7}-3721a^{7}b^{8}-3778a^{6}b^{9}%
-2878a^{5}b^{10}-1621a^{4}b^{11}-643a^{3}b^{12}-162a^{2}b^{13}-23ab^{14}%
-b^{15})$\\\hline
$C_{2}$ & $1$ & $-2(a^{8}-24a^{7}b+20a^{6}b^{2}-24a^{5}b^{3}-26a^{4}%
b^{4}+24a^{3}b^{5}+20a^{2}b^{6}+24ab^{7}+b^{8})$ & $(a^{2}+2ab-b^{2})^{8}$\\\hline
$C_{3}$ & $1$ & $(a^{3}-3a^{2}b-6ab^{2}-b^{3})^{3}$ & $-ab(a+b)(a^{2}%
+ab+b^{2})^{3}(a^{3}-3a^{2}b-6ab^{2}-b^{3})^{6}$\\\hline
$C_{4}$ & $(a-b)^{2}(a+b)^{6}$ & $(a-b)^{2}(a+b)^{6}$ & $ab(a-b)^{2}%
(a+b)^{6}(a^{2}-ab+b^{2})^{3}$\\\hline
$C_{5}$ & $1$ & $-(8b^{4}-1)(4096b^{16}+512b^{12}+64b^{8}+8b^{4}+1)$ &
$-32768b^{20}$\\\hline
$C_{6}$ & $(a^{2}-4ab+b^{2})^{2}$ & $2(a^{4}-4a^{3}b+10a^{2}b^{2}%
-4ab^{3}+b^{4})$ & $-8ab(a-b)^{2}(a^{2}+b^{2})^{2}$\\\hline
$C_{7}$ & $a^{2}$ & $a^{2}-ab-b^{2}$ & $-ab(a+b)^{2}$\\\hline
$C_{8}$ & $b^{2}(a^{2}+b^{2})$ & $-(a^{4}-2a^{2}b^{2}-b^{4})$ & $-a^{2}%
b^{4}(a-b)(a+b)$\\\hline
$C_{9}$ & $a^{3}$ & $a^{3}-a^{2}b-2ab^{2}-b^{3}$ & $-ab(a+b)^{2}%
(a^{2}+ab+b^{2})$\\\hline
$C_{10}$ & $a(b^{2}-ab-a^{2})$ & $-(a^{3}-4ab^{2}-2b^{3})$ &
$-ab(a+2b)(a+b)^{3}$\\\hline
$C_{12}$ & $-b^{3}\left(  a+b\right)  $ & $a^{4}-2a^{3}b+2a^{2}b^{2}%
-2ab^{3}-b^{4}$ & $-ab^{2}(a-b)(a^{2}+b^{2})(a^{2}-ab+b^{2})$\\\hline
$C_{2}\times C_{2}$ & $1$ & $a^{8}-24a^{7}b+20a^{6}b^{2}-24a^{5}b^{3}%
-26a^{4}b^{4}+24a^{3}b^{5}+20a^{2}b^{6}+24ab^{7}+b^{8}$ &
$-16ab(a-b)(a+b)(a^{2}+b^{2})^{2}(a^{2}-2ab-b^{2})^{4}$\\\hline
$C_{2}\times C_{4}$ & $(a^{2}-2ab-b^{2})^{2}$ & $(a^{2}-2ab-b^{2})^{2}$ &
$-ab(a-b)(a+b)(a^{2}+b^{2})^{2}$\\\hline
$C_{2}\times C_{6}$ & $-8(a+b)^{2}(a^{2}-4ab+b^{2})$ & $-16(a^{4}%
-2a^{3}b-2a^{2}b^{2}-2ab^{3}+b^{4})$ & $-512a^{2}b^{2}(a-b)^{2}(a^{2}+b^{2}%
)$\\\hline
$C_{2}\times C_{8}$ & $a(a+b)(a^{2}+2ab-b^{2})$ & $a^{4}+4a^{3}b-b^{4}$ &
$-a^{2}b(-a+b)(a+b)^{2}(a^{2}+b^{2})$%
\label{ta:FTmodel}	
\end{longtable}}

{\renewcommand*{\arraystretch}{1.2} \begin{longtable}{C{0.6in}C{4.9in}}
\caption{The Polynomials $A_{T}$}\\
\hline
$T$ &$A_{T}$\\
\hline
\endfirsthead
\caption[]{\emph{continued}}\\
\hline
$T$ & $A_{T}$\\
\hline
\endhead
\hline
\multicolumn{2}{r}{\emph{continued on next page}}
\endfoot
\hline
\endlastfoot
	
$C_{1}$ & $(a^{3}-3a^{2}b-6ab^{2}-b^{3})(a^{9}-225a^{8}b-855a^{7}%
b^{2}-1866a^{6}b^{3}-2844a^{5}b^{4}-3123a^{4}b^{5}-2265a^{3}b^{6}%
-981a^{2}b^{7}-234ab^{8}-b^{9})$\\\hline
$C_{2}$ & $a^{16}-240a^{15}b+2152a^{14}b^{2}-5040a^{13}b^{3}+4572a^{12}%
b^{4}+1680a^{11}b^{5}-3112a^{10}b^{6}+6480a^{9}b^{7}-6970a^{8}b^{8}%
-6480a^{7}b^{9}-3112a^{6}b^{10}-1680a^{5}b^{11}+4572a^{4}b^{12}+5040a^{3}%
b^{13}+2152a^{2}b^{14}+240ab^{15}+b^{16}$\\\hline
$C_{3}$ & $(a^{3}-3a^{2}b-6ab^{2}-b^{3})(a^{3}+3a^{2}b-b^{3})(a^{6}%
+12a^{5}b+69a^{4}b^{2}+88a^{3}b^{3}+24a^{2}b^{4}-6ab^{5}+b^{6})$\\\hline
$C_{4}$ & $(a^{4}-2a^{3}b-2ab^{3}+b^{4})(a^{12}-6a^{11}b+12a^{10}b^{2}%
-14a^{9}b^{3}+243a^{8}b^{4}-468a^{7}b^{5}+456a^{6}b^{6}-468a^{5}b^{7}%
+243a^{4}b^{8}-14a^{3}b^{9}+12a^{2}b^{10}-6ab^{11}+b^{12})$\\\hline
$C_{5}$ & $1152921504606846976b^{80}-422212465065984b^{60}+15032385536b^{40}%
+393216b^{20}+1$\\\hline
$C_{6}$ & $(a^{4}+4a^{3}b-6a^{2}b^{2}+4ab^{3}+b^{4})(a^{12}-12a^{11}%
b+78a^{10}b^{2}-188a^{9}b^{3}+111a^{8}b^{4}+264a^{7}b^{5}-444a^{6}%
b^{6}+264a^{5}b^{7}+111a^{4}b^{8}-188a^{3}b^{9}+78a^{2}b^{10}-12ab^{11}%
+b^{12})$\\\hline
$C_{7}$ & $(a^{2}+ab+b^{2})(a^{6}+11a^{5}b+30a^{4}b^{2}+15a^{3}b^{3}%
-10a^{2}b^{4}-5ab^{5}+b^{6})$\\\hline
$C_{8}$ & $a^{16}-8a^{14}b^{2}+12a^{12}b^{4}+8a^{10}b^{6}-10a^{8}b^{8}%
+8a^{6}b^{10}+12a^{4}b^{12}-8a^{2}b^{14}+b^{16}$\\\hline
$C_{9}$ & $(a^{3}+3a^{2}b-b^{3})(a^{9}+9a^{8}b+27a^{7}b^{2}+48a^{6}%
b^{3}+54a^{5}b^{4}+45a^{4}b^{5}+27a^{3}b^{6}+9a^{2}b^{7}-b^{9})$\\
$C_{10}$ & $\frac{1}{16}(a^{12}+16a^{11}b+104a^{10}b^{2}+360a^{9}%
b^{3}+720a^{8}b^{4}+816a^{7}b^{5}+416a^{6}b^{6}-96a^{5}b^{7}-240a^{4}%
b^{8}-80a^{3}b^{9}+64a^{2}b^{10}+64ab^{11}+16b^{12})$\\\hline
$C_{12}$ & $(a^{4}-2a^{3}b-2ab^{3}+b^{4})(a^{12}-6a^{11}b+12a^{10}%
b^{2}-14a^{9}b^{3}+3a^{8}b^{4}+12a^{7}b^{5}-24a^{6}b^{6}+12a^{5}b^{7}%
+3a^{4}b^{8}-14a^{3}b^{9}+12a^{2}b^{10}-6ab^{11}+b^{12})$\\\hline
$C_{2}\times C_{2}$ & $a^{16}+232a^{14}b^{2}+732a^{12}b^{4}-1192a^{10}%
b^{6}+710a^{8}b^{8}-1192a^{6}b^{10}+732a^{4}b^{12}+232a^{2}b^{14}+b^{16}$\\\hline
$C_{2}\times C_{4}$ & $(a^{8}-4a^{7}b+4a^{6}b^{2}+28a^{5}b^{3}+6a^{4}%
b^{4}-28a^{3}b^{5}+4a^{2}b^{6}+4ab^{7}+b^{8})(a^{8}+4a^{7}b+4a^{6}%
b^{2}-28a^{5}b^{3}+6a^{4}b^{4}+28a^{3}b^{5}+4a^{2}b^{6}-4ab^{7}+b^{8})$\\\hline
$C_{2}\times C_{6}$ & $(a^{4}-2a^{3}b+6a^{2}b^{2}-2ab^{3}+b^{4})(a^{12}%
-6a^{11}b+6a^{10}b^{2}+10a^{9}b^{3}+15a^{8}b^{4}-36a^{7}b^{5}+84a^{6}%
b^{6}-36a^{5}b^{7}+15a^{4}b^{8}+10a^{3}b^{9}+6a^{2}b^{10}-6ab^{11}+b^{12})$\\\hline
$C_{2}\times C_{8}$ & $a^{16}-8a^{14}b^{2}+12a^{12}b^{4}+8a^{10}b^{6}%
+230a^{8}b^{8}+8a^{6}b^{10}+12a^{4}b^{12}-8a^{2}b^{14}+b^{16}$%
\label{ta:AT}	
\end{longtable}}

{\renewcommand*{\arraystretch}{1.2} \begin{longtable}{C{0.6in}C{4.9in}}
\caption{The Polynomials $B_{T}$}\\
\hline
$T$ & $B_{T}$\\
\hline
\endfirsthead
\caption[]{\emph{continued}}\\
\hline
$T$ & $B_{T}$ \\
\hline
\endhead
\hline
\multicolumn{2}{r}{\emph{continued on next page}}
\endfoot
\hline
\endlastfoot
	
$C_{1}$ & $-(a^{18}+522a^{17}b-8433a^{16}b^{2}-56382a^{15}b^{3}-174843a^{14}%
b^{4}-433494a^{13}b^{5}-1084008a^{12}b^{6}-2541474a^{11}b^{7}-4836168a^{10}%
b^{8}-7036328a^{9}b^{9}-7787457a^{8}b^{10}-6599304a^{7}b^{11}-4265121a^{6}%
b^{12}-2050470a^{5}b^{13}-692973a^{4}b^{14}-148722a^{3}b^{15}-17154a^{2}%
b^{16}-504ab^{17}+b^{18})$\\\hline
$C_{2}$ & $-(a^{8}-24a^{7}b+20a^{6}b^{2}-24a^{5}b^{3}-26a^{4}b^{4}%
+24a^{3}b^{5}+20a^{2}b^{6}+24ab^{7}+b^{8})(a^{16}+528a^{15}b-3992a^{14}%
b^{2}+11088a^{13}b^{3}-7716a^{12}b^{4}-3696a^{11}b^{5}+3032a^{10}%
b^{6}-14256a^{9}b^{7}+17606a^{8}b^{8}+14256a^{7}b^{9}+3032a^{6}b^{10}%
+3696a^{5}b^{11}-7716a^{4}b^{12}-11088a^{3}b^{13}-3992a^{2}b^{14}%
-528ab^{15}+b^{16})$\\\hline
$C_{3}$ & $-(a^{6}+12a^{5}b+15a^{4}b^{2}-20a^{3}b^{3}-30a^{2}b^{4}%
-6ab^{5}+b^{6})(a^{12}+6a^{11}b+48a^{10}b^{2}+428a^{9}b^{3}+1899a^{8}%
b^{4}+3636a^{7}b^{5}+3030a^{6}b^{6}+720a^{5}b^{7}-288a^{4}b^{8}-58a^{3}%
b^{9}+48a^{2}b^{10}+6ab^{11}+b^{12})$\\\hline
$C_{4}$ & $-(a^{8}-4a^{7}b+28a^{6}b^{2}-52a^{5}b^{3}+46a^{4}b^{4}-52a^{3}%
b^{5}+28a^{2}b^{6}-4ab^{7}+b^{8})(a^{16}-8a^{15}b+104a^{13}b^{3}%
-220a^{12}b^{4}+216a^{11}b^{5}-728a^{10}b^{6}+1144a^{9}b^{7}-1026a^{8}%
b^{8}+1144a^{7}b^{9}-728a^{6}b^{10}+216a^{5}b^{11}-220a^{4}b^{12}%
+104a^{3}b^{13}-8ab^{15}+b^{16})$\\\hline
$C_{5}$ & $-(8b^{4}-4b^{2}+1)(8b^{4}+4b^{2}+1)(4096b^{16}-2048b^{14}%
+512b^{12}-64b^{8}+8b^{4}-4b^{2}+1)(4096b^{16}+2048b^{14}+512b^{12}%
-64b^{8}+8b^{4}+4b^{2}+1)(1152921504606846976b^{80}-633318697598976b^{60}%
+79456894976b^{40}+589824b^{20}+1)$\\\hline
$C_{6}$ & $-(a^{8}-4a^{7}b+4a^{6}b^{2}+20a^{5}b^{3}-26a^{4}b^{4}+20a^{3}%
b^{5}+4a^{2}b^{6}-4ab^{7}+b^{8})(a^{16}-8a^{15}b+24a^{14}b^{2}-568a^{13}%
b^{3}+2684a^{12}b^{4}-4776a^{11}b^{5}+2344a^{10}b^{6}+4840a^{9}b^{7}%
-8826a^{8}b^{8}+4840a^{7}b^{9}+2344a^{6}b^{10}-4776a^{5}b^{11}+2684a^{4}%
b^{12}-568a^{3}b^{13}+24a^{2}b^{14}-8ab^{15}+b^{16})$\\\hline
$C_{7}$ & $-(a^{12}+18a^{11}b+117a^{10}b^{2}+354a^{9}b^{3}+570a^{8}%
b^{4}+486a^{7}b^{5}+273a^{6}b^{6}+222a^{5}b^{7}+174a^{4}b^{8}+46a^{3}%
b^{9}-15a^{2}b^{10}-6ab^{11}+b^{12})$\\\hline
$C_{8}$ & $-(a^{8}-4a^{6}b^{2}-2a^{4}b^{4}-4a^{2}b^{6}+b^{8})(a^{16}%
-8a^{14}b^{2}+12a^{12}b^{4}+8a^{10}b^{6}-34a^{8}b^{8}+8a^{6}b^{10}%
+12a^{4}b^{12}-8a^{2}b^{14}+b^{16})$\\\hline
$C_{9}$ & $-(a^{18}+18a^{17}b+135a^{16}b^{2}+570a^{15}b^{3}+1557a^{14}%
b^{4}+2970a^{13}b^{5}+4128a^{12}b^{6}+4230a^{11}b^{7}+3240a^{10}%
b^{8}+2032a^{9}b^{9}+1359a^{8}b^{10}+1080a^{7}b^{11}+735a^{6}b^{12}%
+306a^{5}b^{13}+27a^{4}b^{14}-42a^{3}b^{15}-18a^{2}b^{16}+b^{18})$\\\hline
$C_{10}$ & $-\frac{1}{64}(a^{2}+2ab+2b^{2})(a^{4}+6a^{3}b+6a^{2}%
b^{2}-4ab^{3}-4b^{4})(a^{4}+6a^{3}b+12a^{2}b^{2}+8ab^{3}+2b^{4}%
)(a^{8}+10a^{7}b+32a^{6}b^{2}+40a^{5}b^{3}+14a^{4}b^{4}+8a^{2}b^{6}-4b^{8})$\\\hline
$C_{12}$ & $-(a^{8}-4a^{7}b+4a^{6}b^{2}-4a^{5}b^{3}-2a^{4}b^{4}-4a^{3}%
b^{5}+4a^{2}b^{6}-4ab^{7}+b^{8})(a^{16}-8a^{15}b+24a^{14}b^{2}-40a^{13}%
b^{3}+44a^{12}b^{4}-24a^{11}b^{5}-32a^{10}b^{6}+88a^{9}b^{7}-114a^{8}%
b^{8}+88a^{7}b^{9}-32a^{6}b^{10}-24a^{5}b^{11}+44a^{4}b^{12}-40a^{3}%
b^{13}+24a^{2}b^{14}-8ab^{15}+b^{16})$\\\hline
$C_{2}\times C_{2}$ & $-(a^{8}+20a^{6}b^{2}-26a^{4}b^{4}+20a^{2}b^{6}%
+b^{8})(a^{8}-24a^{7}b+20a^{6}b^{2}-24a^{5}b^{3}-26a^{4}b^{4}%
+24a^{3}b^{5}+20a^{2}b^{6}+24ab^{7}+b^{8})(a^{8}+24a^{7}b+20a^{6}b^{2}%
+24a^{5}b^{3}-26a^{4}b^{4}-24a^{3}b^{5}+20a^{2}b^{6}-24ab^{7}+b^{8})$\\\hline
$C_{2}\times C_{4}$ & $-(a^{4}-4a^{3}b-6a^{2}b^{2}+4ab^{3}+b^{4})
(a^{4}+4a^{3}b-6a^{2}b^{2}-4ab^{3}+b^{4})(a^{8}-4a^{6}b^{2}+22a^{4}%
b^{4}-4a^{2}b^{6}+b^{8})(a^{8}+20a^{6}b^{2}-26a^{4}b^{4}+20a^{2}b^{6}+b^{8}%
)$\\\hline
$C_{2}\times C_{6}$ & $-(a^{8}-4a^{7}b+4a^{6}b^{2}-28a^{5}b^{3}+22a^{4}%
b^{4}-28a^{3}b^{5}+4a^{2}b^{6}-4ab^{7}+b^{8})(a^{8}-4a^{7}b+4a^{6}b^{2}%
-4a^{5}b^{3}-2a^{4}b^{4}-4a^{3}b^{5}+4a^{2}b^{6}-4ab^{7}+b^{8})(a^{8}%
-4a^{7}b+4a^{6}b^{2}+20a^{5}b^{3}-26a^{4}b^{4}+20a^{3}b^{5}+4a^{2}%
b^{6}-4ab^{7}+b^{8})$\\\hline
$C_{2}\times C_{8}$ & $-(a^{8}-4a^{6}b^{2}-26a^{4}b^{4}-4a^{2}b^{6}%
+b^{8})(a^{8}-4a^{6}b^{2}-2a^{4}b^{4}-4a^{2}b^{6}+b^{8})(a^{8}-4a^{6}%
b^{2}+22a^{4}b^{4}-4a^{2}b^{6}+b^{8})$
\label{ta:BT}	
\end{longtable}}

{\renewcommand*{\arraystretch}{1.2} \begin{longtable}{C{0.6in}C{4.9in}}
\caption{The Polynomials $D_{T}$}\\
\hline
$T$ & $D_{T} $\\
\hline
\endfirsthead
\caption[]{\emph{continued}}\\
\hline
$T$ &  $D_{T}$ \\
\hline
\endhead
\hline
\multicolumn{2}{r}{\emph{continued on next page}}
\endfoot
\hline
\endlastfoot
	
$C_{1}$ & $-ab(a+b)(a^{2}+ab+b^{2})^{3}(a^{3}+6a^{2}b+3ab^{2}-b^{3})^{9}$\\\hline
$C_{2}$ & $-ab(a-b)(a+b)(a^{2}+b^{2})^{2}(a^{2}-2ab-b^{2})^{4}(a^{2}%
+2ab-b^{2})^{16}$\\\hline
$C_{3}$ & $-a^{3}b^{3}(a+b)^{3}(a^{2}+ab+b^{2})^{9}(a^{3}+6a^{2}%
b+3ab^{2}-b^{3})^{3}$\\\hline
$C_{4}$ & $a^{4}b^{4}(a+b)^{6}(a-b)^{2}(a^{2}+b^{2})(a^{2}-4ab+b^{2}%
)^{3}(a^{2}-ab+b^{2})^{12}$\\\hline
$C_{5}$ & $2^{75}b^{100}(64b^{8}-8b^{4}-1)(4096b^{16}-1024b^{12}%
+256b^{8}-24b^{4}+1)(4096b^{16}+1536b^{12}+256b^{8}+16b^{4}+1)$\\\hline
$C_{6}$ & $a^{3}b^{3}(a+b)^{2}(a-b)^{6}(a^{2}-ab+b^{2})(a^{2}-4ab+b^{2}%
)^{4}(a^{2}+b^{2})^{12}$\\\hline
$C_{7}$ & $-a^{7}b^{7}(a+b)^{7}(a^{3}+8a^{2}b+5ab^{2}-b^{3})$\\\hline
$C_{8}$ & $a^{16}b^{16}(a+b)^{4}(a-b)^{4}(a^{2}-2ab-b^{2})(a^{2}%
+2ab-b^{2})(a^{2}+b^{2})^{2}$\\\hline
$C_{9}$ & $-a^{9}b^{9}(a+b)^{9}(a^{2}+ab+b^{2})^{3}(a^{3}+6a^{2}%
b+3ab^{2}-b^{3})$\\\hline
$C_{10}$ & $\frac{1}{4096}a^{5}b^{10}(a+b)^{10}(a+2b)^{5}(a^{2}+6ab+4b^{2}%
)(a^{2}+ab-b^{2})^{2}$\\\hline
$C_{12}$ & $a^{12}b^{12}(a+b)^{2}(a-b)^{6}(a^{2}-4ab+b^{2})(a^{2}+b^{2}%
)^{3}(a^{2}-ab+b^{2})^{4}$\\\hline
$C_{2}\times C_{2}$ & $a^{2}b^{2}(a+b)^{2}(a-b)^{2}(a^{2}+b^{2})^{4}%
(a^{2}-2ab-b^{2})^{8}(a^{2}+2ab-b^{2})^{8}$\\\hline
$C_{2}\times C_{4}$ & $a^{4}b^{4}(a+b)^{4}(a-b)^{4}(a^{2}-2ab-b^{2})^{4}%
(a^{2}+2ab-b^{2})^{4}(a^{2}+b^{2})^{8}$\\\hline
$C_{2}\times C_{6}$ & $a^{6}b^{6}(a+b)^{4}(a-b)^{12}(a^{2}-4ab+b^{2}%
)^{2}(a^{2}-ab+b^{2})^{2}(a^{2}+b^{2})^{6}$\\\hline
$C_{2}\times C_{8}$ & $a^{8}a^{8}(a+b)^{8}(a-b)^{8}(a^{2}-2ab-b^{2})^{2}%
(a^{2}+2ab-b^{2})^{2}(a^{2}+b^{2})^{4}$
\label{ta:DT}	
\end{longtable}}

{\renewcommand*{\arraystretch}{1.2} \begin{longtable}{C{1.3in}C{4.2in}}
\caption{The Polynomials $\hat{D}_{T}$}\\
\hline
$T$ & $\hat{D}_{T} $\\
\hline
\endfirsthead
\caption[]{\emph{continued}}\\
\hline
$T$ &  $\hat{D}_{T}$ \\
\hline
\endhead
\hline
\multicolumn{2}{r}{\emph{continued on next page}}
\endfoot
\hline
\endlastfoot
	
$C_{1},C_{3},C_{9}$ & $-(a+b)(a^{2}+ab+b^{2})(a^{3}+6a^{2}b+3ab^{2}-b^{3})$\\\hline
$C_{2},C_{8},C_{2}\times C_{2},$   $C_{2}\times C_{4},C_{2}\times C_{8}$ &
$-(a-b)(a+b)(a^{2}+b^{2})(a^{2}-2ab-b^{2})(a^{2}+2ab-b^{2})$\\\hline
$C_{4},C_{6},C_{12},C_{2}\times C_{6}$ & $-(a+b)(a-b)(a^{2}+b^{2}%
)(a^{2}-4ab+b^{2})(a^{2}-ab+b^{2})$\\\hline
$C_{5}$ & $\frac{2}{5}(64b^{8}-8b^{4}-1)(4096b^{16}-1024b^{12}+256b^{8}%
-24b^{4}+1)(4096b^{16}+1536b^{12}+256b^{8}+16b^{4}+1)$\\\hline
$C_{7}$ & $-(a+b)(a^{3}+8a^{2}b+5ab^{2}-b^{3})$\\\hline
$C_{10}$ & $\frac{-1}{8}(a+b)(a+2b)(a^{2}+6ab+4b^{2})(a^{2}+ab-b^{2})$%
\label{ta:DhatT}	
\end{longtable}}

\bibliographystyle{amsplain}
\bibliography{bibliography}

\end{document}